\newtheorem{theorem}{Theorem}[section]
\newtheorem{prop}[theorem]{Proposition}
\newtheorem{lemma}[theorem]{Lemma}
\newtheorem{coro}[theorem]{Corollary}
\newtheorem{prop-def}{Proposition-Definition}[section]
\newtheorem{conjecture}[theorem]{Conjecture}
\newtheorem{problem}{Problem}
\theoremstyle{definition} 
\newtheorem{defn}[theorem]{Definition}
\newtheorem{exam}[theorem]{Example}
\newcommand{\nc}{\newcommand}
\nc{\delete}[1]{{}}
\nc{\mmargin}[1]{}
\nc{\mlabel}[1]{\label{#1}}  
\nc{\mcite}[1]{\cite{#1}}  
\nc{\mref}[1]{\ref{#1}}  
\nc{\mbibitem}[1]{\bibitem{#1}} 
\nc{\mlabel}[1]{\label{#1}  
{\hfill \hspace{1cm}{\bf{{\ }\hfill(#1)}}}}
\nc{\mcite}[1]{\cite{#1}{{\bf{{\ }(#1)}}}}  
\nc{\mref}[1]{\ref{#1}{{\bf{{\ }(#1)}}}}  
\nc{\mbibitem}[1]{\bibitem[\bf #1]{#1}} 
\nc{\difforg}{U_D}
\nc{\rbforg}{U_{RB}}
\nc{\multforg}{U_{M}}
\nc{\diffree}{{}_DF}
\nc{\difcof}{F_D}
\nc{\rbfree}{{}_{RB}F}
\nc{\rbcof}{F_{RB}}
\nc{\multfree}{{}_{M}F}
\nc{\multcof}{F_M}
\nc{\vep}{\varepsilon}
\nc{\bin}[2]{ (_{\stackrel{\scs{#1}}{\scs{#2}}})}  
\nc{\binc}[2]{(\!\! \begin{array}{c} \scs{#1}\\
    \scs{#2} \end{array}\!\!)}  
\nc{\bincc}[2]{  ( {\scs{#1} \atop
    \vspace{-1cm}\scs{#2}} )}  
\nc{\bs}{\bar{S}}
\nc{\la}{\longrightarrow}
\nc{\ot}{\otimes}
\nc{\rar}{\rightarrow}
\nc{\dar}{\downarrow}
\nc{\dap}[1]{\downarrow \rlap{$\scriptstyle{#1}$}}
\nc{\defeq}{\stackrel{\rm def}{=}}
\nc{\dis}[1]{\displaystyle{#1}}
\nc{\dotcup}{\ \displaystyle{\bigcup^\bullet}\ }
\nc{\hcm}{\ \hat{,}\ }
\nc{\hts}{\hat{\otimes}}
\nc{\hcirc}{\hat{\circ}}
\nc{\lleft}{[}
\nc{\lright}{]}
\nc{\curlyl}{\left \{ \begin{array}{c} {} \\ {} \end{array}
    \right .  \!\!\!\!\!\!\!}
\nc{\curlyr}{ \!\!\!\!\!\!\!
    \left . \begin{array}{c} {} \\ {} \end{array}
    \right \} }
\nc{\longmid}{\left | \begin{array}{c} {} \\ {} \end{array}
    \right . \!\!\!\!\!\!\!}
\nc{\ora}[1]{\stackrel{#1}{\rar}}
\nc{\ola}[1]{\stackrel{#1}{\la}}
\nc{\scs}[1]{\scriptstyle{#1}} \nc{\mrm}[1]{{\rm #1}}
\nc{\dirlim}{\displaystyle{\lim_{\longrightarrow}}\,}
\nc{\invlim}{\displaystyle{\lim_{\longleftarrow}}\,}
\nc{\dislim}[1]{\displaystyle{\lim_{#1}}} \nc{\colim}{\mrm{colim}}
\nc{\mvp}{\vspace{0.3cm}} \nc{\tk}{^{(k)}} \nc{\tp}{^\prime}
\nc{\ttp}{^{\prime\prime}} \nc{\svp}{\vspace{2cm}}
\nc{\vp}{\vspace{8cm}}
\nc{\modg}[1]{\!<\!\!{#1}\!\!>}
\nc{\intg}[1]{F_C(#1)}
\nc{\lmodg}{\!<\!\!}
\nc{\rmodg}{\!\!>\!}
\nc{\cpi}{\widehat{\Pi}}
\nc{\sha}{{\mbox{\cyr X}}}  
\nc{\ssha}{{\mbox{\cyrs X}}} 
\nc{\tsha}{{\mbox{\cyrt X}}}
\nc{\shpr}{\diamond}    
\nc{\labs}{\mid\!}
\nc{\rabs}{\!\mid}
\font\cyr=wncyr10
\font\cyrs=wncyr7
\font\cyrt=wncyr5
\nc{\ann}{\mrm{ann}}
\nc{\Aut}{\mrm{Aut}}
\nc{\can}{\mrm{can}}
\nc{\Cont}{\mrm{Cont}}
\nc{\rchar}{\mrm{char}}
\nc{\cok}{\mrm{coker}}
\nc{\dtf}{{R-{\rm tf}}}
\nc{\dtor}{{R-{\rm tor}}}
\nc{\Div}{{\mrm Div}}
\nc{\End}{\mrm{End}}
\nc{\Ext}{\mrm{Ext}}
\nc{\Fil}{\mrm{Fil}}
\nc{\Fr}{\mrm{Fr}}
\nc{\Frob}{\mrm{Frob}}
\nc{\Gal}{\mrm{Gal}}
\nc{\GL}{\mrm{GL}}
\nc{\Hom}{\mrm{Hom}}
\nc{\hsr}{\mrm{H}}
\nc{\hpol}{\mrm{HP}}
\nc{\id}{\mrm{id}}
\nc{\im}{\mrm{im}}
\nc{\incl}{\mrm{incl}}
\nc{\length}{\mrm{length}}
\nc{\mchar}{\rm char}
\nc{\mpart}{\mrm{part}}
\nc{\ql}{{\QQ_\ell}}
\nc{\qp}{{\QQ_p}}
\nc{\rank}{\mrm{rank}}
\nc{\rcot}{\mrm{cot}}
\nc{\rdef}{\mrm{def}}
\nc{\rdiv}{{\rm div}}
\nc{\rtf}{{\rm tf}}
\nc{\rtor}{{\rm tor}}
\nc{\res}{\mrm{res}}
\nc{\SL}{\mrm{SL}}
\nc{\Spec}{\mrm{Spec}}
\nc{\tor}{\mrm{tor}}
\nc{\Tr}{\mrm{Tr}}
\nc{\tr}{\mrm{tr}}
\nc{\ab}{\mathbf{Ab}}
\nc{\DRB}{\mathbf{DRB}}
\nc{\OPA}{\mathbf{OPA}}
\nc{\Alg}{{\mathbf{Alg}}}
\nc{\ALG}{{\mathbf{ALG}}}
\nc{\RB}{\mathbf{RB}}
\nc{\RBA}{\mathbf{RBA}}
\nc{\bfk}{{\bf k}}
\nc{\bfone}{{\bf 1}}
\nc{\bfzero}{{\bf 0}}
\nc{\detail}{\marginpar{\bf More detail}
    \noindent{\bf Need more detail!}
    \svp}
\nc{\Diff}{\mathbf{Diff}}
\nc{\gap}{\marginpar{\bf Incomplete}\noindent{\bf Incomplete!!}
    \svp}
\nc{\FMod}{\mathbf{FMod}}
\nc{\Int}{\mathbf{Int}}
\nc{\Mon}{\mathbf{Mon}}
\nc{\Mult}{{\mathbf{Mlt}}}
\nc{\remarks}{\noindent{\bf Remarks: }}
\nc{\Rep}{\mathbf{Rep}}
\nc{\Rings}{\mathbf{Rings}}
\nc{\Sets}{\mathbf{Sets}}
\nc{\lir}[1]{\textcolor{purple}{\underline{Li:}#1 }}
\nc{\BA}{{\mathbb A}}
\nc{\CC}{{\mathbb C}}
\nc{\DD}{{\mathbb D}}
\nc{\EE}{{\mathbb E}}
\nc{\FF}{{\mathbb F}}
\nc{\GG}{{\mathbb G}}
\nc{\HH}{{\mathbb H}}
\nc{\LL}{{\mathbb L}}
\nc{\NN}{{\mathbb N}}
\nc{\PP}{{\mathbb P}}
\nc{\QQ}{{\mathbb Q}}
\nc{\RR}{{\mathbb R}}
\nc{\TT}{{\mathbb T}}
\nc{\VV}{{\mathbb V}}
\nc{\ZZ}{{\mathbb Z}}
\nc{\TP}{\widetilde{P}}
\nc{\lp}{\widehat{P}}
\nc{\lpt}{\widehat{P}^{\, \omega}}
\nc{\lqb}{\widehat{Q}}
\nc{\lqt}{\widehat{Q}^{\, \omega}}
\nc{\ld}{\hat{d}}
\nc{\ldt}{\hat{d}^{\, \omega}}
\nc{\lqst}{\hat{q}^{\, \omega}}
\nc{\lqs}{\hat{q}}
\nc{\lpp}{\overline{P'}}
\nc{\dee}{\mathrm{Deg}}
\nc{\Mpf}{\mathbf{M}}
\nc{\m}{\iota}
\nc{\cala}{{\mathcal A}}
\nc{\calc}{{\mathcal C}}
\nc{\cald}{\mathcal{D}}
\nc{\cale}{{\mathcal E}}
\nc{\calf}{{\mathcal F}}
\nc{\calg}{{\mathcal G}}
\nc{\calh}{{\mathcal H}}
\nc{\cali}{{\mathcal I}}
\nc{\call}{{\mathcal L}}
\nc{\calm}{{\mathcal M}}
\nc{\caln}{{\mathcal N}}
\nc{\calo}{{\mathcal O}}
\nc{\calp}{{\mathcal P}}
\nc{\calr}{{\mathcal R}}
\nc{\cals}{{\mathcal S}}
\nc{\calt}{{\Omega}}
\nc{\calw}{{\mathcal W}}
\nc{\calx}{{\mathcal X}}
\nc{\CA}{\mathcal{A}}
\nc{\fraka}{{\mathfrak a}}
\nc{\frakb}{\mathfrak{b}}
\nc{\frakB}{{\frak B}} \nc{\frakm}{{\frak
m}} \nc{\frakM}{{\frak M}}
\nc{\frakp}{{\frak p}}
\nc{\frakS}{{\frak S}}
\nc{\frakA}{{\frak A}} \nc{\frakx}{{\frakx}}
\nc{\Dif}{\mathbf{Dif}}
\nc{\DIF}{\mathbf{DIF}}
\nc{\ADR}{\mathbf{ADR}}
\nc{\OA}{\mathbf{OA}}
\nc{\ODA}{\mathbf{ODA}}
\nc{\ORB}{\mathbf{ORB}}
\nc{\DaRB}{\mathbf{DaRB}}
\nc{\G}{\mathbf{G}}
\nc{\C}{\mathbf{C}}
\nc{\A}{\mathbf{A}}
\nc{\B}{\mathbf{B}}
\nc{\T}{\mathbf{T}}
\begin{document}
\title[Classification of extensions, liftings and distributive laws]{Classification of operator extensions, monad liftings and distributive laws for differential algebras and Rota-Baxter algebras}

\date{\today}

\author{Shilong Zhang}
\address{College of Science, Northwest A\&F University, Yangling 712100, Shaanxi, China, and
Department of Mathematics, Lanzhou University, Lanzhou, Gansu, 730000, China}
\email{2663067567@qq.com}

\author{Li Guo}
\address{
Department of Mathematics and Computer Science,
Rutgers University,
Newark, NJ 07102, USA}
\email{liguo@newark.rutgers.edu}

\author{William Keigher}
\address{
Department of Mathematics and Computer Science,
Rutgers University,
Newark, NJ 07102, USA}
\email{keigher@newark.rutgers.edu}

\begin{abstract}
Generalizing the algebraic formulation of the First Fundamental Theorem of Calculus (FFTC), a class of constraints involving a pair of operators was considered in~\cite{ZGK2}. For a given constraint, the existences of extensions of differential and Rota-Baxter operators, of liftings of monads and comonads, and of mixed distributive laws are shown to be equivalent. In this paper, we give a classification of the constraints satisfying these equivalent conditions.
\end{abstract}

\subjclass[2010]{18C15, 13N99, 16W99}

\keywords{
Rota-Baxter algebra, differential algebra, cover of operators, extension of operators, monad, distributive law}

\maketitle

\vspace{-1cm}

\tableofcontents

\vspace{-1cm}

\setcounter{section}{0}

\allowdisplaybreaks

\section{Introduction}

The algebraic study of analysis has a long history. In the 1930s, the notion of a {\bf differential ring} or {\bf algebra} was introduced by Ritt~\cite{Ri} to give an algebraic study of differential analysis and differential equations. Here a differential algebra is an (associative) algebra $R$ with a linear operator $d$ satisfying the Leibniz rule
\begin{equation}
 d(xy)=d(x)y+xd(y)\ \text{ for all }\ x,\ y\in R.
\mlabel{eq:der10}
\end{equation}
Through the later work of Kolchin and many other mathematicians, differential algebra has been developed into a vast area including differential Galois groups, differential algebraic groups and differential algebraic geometry, with broad applications in number theory, logic and mechanical proof of mathematical theorems~\cite{Kol,SP,Wu2}.

The algebraic abstraction of the integral analysis came much later, as a byproduct of the work of G. Baxter in probability in 1960~\cite{Ba}. A Baxter algebra, later called {\bf Rota-Baxter algebra}, is an algebra $R$ with a linear operator $P$ such that
\begin{equation}
 P(x)P(y)=P(P(x)y)+P(xP(y))+\lambda P(xy)\ \text{ for all }\ x,\ y\in R.
\mlabel{eq:bax1}
\end{equation}
Here $\lambda$ is a given scalar in the base ring, called the {\bf weight} of the Rota-Baxter operator. After the pioneering work of Cartier and Rota~\cite{Ca,Ro} in combinatorics, the recent developments of Rota-Baxter algebras have ranged from multiple zeta values in number theory to renormalization of perturbation quantum field theory~\cite{Bai,CK,Gub,GK1,GZ,RR,Ro}.

As a differential analog of a Rota-Baxter operator of weight $\lambda$, a {\bf differential operator of weight $\lambda$}~\cite{GK3} is defined to satisfy the equation
\begin{equation}
 d(xy)=d(x)y+xd(y)+\lambda d(x)d(y)\ \text{ for all }\ x,\ y\in R
\mlabel{eq:der1}
\end{equation}
and $d(\bfone_{R})=0.$

With the algebraizations of both differential and integral analyses in place, it is natural to formulate an algebraic abstraction of the two analyses through the well-known First Fundamental Theorem of Calculus (FFTC), leading to the notion of a differential Rota-Baxter algebra with weight.
To be precise, a differential Rota-Baxter algebra of weight $\lambda$ is a triple $(R,d,P)$ consisting of
\begin{enumerate}
\item
an algebra $R$,
\item
a differential operator $d$ of weight $\lambda$ on $R$, and
\item
a Rota-Baxter operator $P$ of weight $\lambda$ on $R$,
\end{enumerate}
such that
\begin{equation}
d P = \id_R,
\mlabel{eq:fftc}
\end{equation}
reflecting the FFTC. See~\mcite{GGR,GRR,RR} for a variation, called an integro-differential algebra.

This natural algebraic abstraction of the FFTC has quite remarkable categorical implications in terms of liftings of monads and mixed distributive laws, as shown in~\cite{ZGK} which has attracted interests from combinatorics, differential algebra, probability and computer science~\cite{CL,Fr,Ja,St}.

We fix an algebra $R$ and let $R^\NN$ denote the Hurwitz series algebra over $R$~\cite{Ke,GK3}. Then $R^\NN$, with a natural differential operator $\partial_R$, is the cofree differential algebra on $R$. Further, we have a comonad, denoted by $\C$, giving differential algebras~\cite{ZGK}. Also let $\sha(R)$ be the mixable shuffle product algebra~\cite{GK1}. Then $(\sha(R), P_R)$, where $P_R$ is a naturally defined Rota-Baxter operator, is the free Rota-Baxter algebra on $R$ which results in a monad, denoted by $\T$, giving Rota-Baxter algebras~\cite{ZGK}.
In~\mcite{GK3}, a differential operator on $R$ is uniquely extended to $\sha(R)$, enriching $\sha(R)$ to be the free differential Rota-Baxter algebra and giving a lifting of the monad $\T$. Further, by the lifting, we obtain a mixed distributive law of the monad $\T$ over the comonad $\C$~\cite{ZGK}. Similarly, given a Rota-Baxter operator on $R$, we construct a cover\footnote{The term ``coextension" in~\cite{ZGK2} is replaced by ``cover" in this paper.} of the operator on $R^\NN$, and enrich $R^\NN$ to be the cofree differential Rota-Baxter algebra which gives a lifting of the comonad $\C$~\cite{ZGK}. The same mixed distributive law also follows.

These results show that the coupling of a differential operator and a Rota-Baxter operator via FFTC leads to the existences of extensions of differential operators, of covers of Rota-Baxter operators, of liftings of monads, and of mixed distributive laws. Then there are the following natural problems.

\begin{problem}
How are the categorical properties of extensions of differential operators, covers of Rota-Baxter operators, liftings of monads, and  mixed distributive laws interrelated to one another?
 \label{pr:one}
\end{problem}

\begin{problem}
Are these equivalent categorical properties unique to this coupling via FFTC?  In other words, are there other examples where these equivalent properties hold?
\label{pr:two}
\end{problem}

To investigate Problem~\ref{pr:one}, a follow-up study of~\cite{ZGK}
was carried out in~\cite{ZGK2} where the identity in the FFTC is viewed as an example
of a polynomial identity in two noncommutative variables symbolizing the differential operator and Rota-Baxter operator. Thus we work in the noncommutative polynomial algebra $\bfk\langle x,y\rangle$ in two variables $x, y$ and regard each polynomial $\omega=\omega(x,y)$ in $\bfk\langle x,y\rangle$ as a constraint between two linear operators $q$ and $Q$, both defined on an algebra $R$, given by a formal identity $\omega(q, Q)=0$. When $q$ and $Q$ are the differential operator and Rota-Baxter operator respectively, $\omega(x,y)=xy-1$
gives the FFTC as in ~\eqref{eq:fftc}.

Before attempting the most general case, we consider a class $\calt\subset \bfk\langle x,y\rangle$ of constraints in which to investigate Problem~\ref{pr:one}.
The class of constraints is
\begin{equation}
\calt:=xy+\bfk[x]+y\bfk[x] = \left\{
xy-(\phi(x) +y\psi(x))\,|\, \phi,\psi\in\bfk[x]\right\}.
\mlabel{eq:Ti}
\end{equation}

It was a pleasant surprise to find out in~\cite{ZGK2} that, for these constrains, the following categorical properties are in fact equivalent: extensions of differential operators, covers of Rota-Baxter operators, liftings of monads, and existence of mixed distributive laws. See Theorem~\ref{thm:main} for a precise statement. The theorem provides an answer to Problem~\ref{pr:one}.

The purpose of this paper is to address Problem~\ref{pr:two}, namely the dependence of these categorical properties on the constraints. According to~\cite{ZGK}, the polynomial $xy-1$ representing the FFTC provides an example where all these equivalent categorical properties are fulfilled. As these categorical properties are quite strong, one would expect that polynomials with such properties are quite rare. In this paper we confirm this expectation, by explicitly displaying all such polynomials, through their connection with the covers of Rota-Baxter operators.

Here is an outline of the paper. In Section~\ref{sec:coextension}, we give the preliminary concepts and results leading to Theorem~\mref{thm:main} on the equivalence among the existences of covers of operators, of extensions of operators, of liftings of monads, and of mixed distributive laws. We then state Theorem~\ref{thm:lr0} classifying all polynomials in $\calt$ satisfying these equivalent properties. In Section~\ref{sec:proof}, Theorem~\ref{thm:lr0} is rephrased as Theorem~\mref{thm:lr} and proved in several steps. It would be interesting to determine whether this equivalence of categorical properties holds for more general  polynomials in $\bfk\langle x,y\rangle$ and to achieve a classification of such polynomials. Conjecture~\ref{conj:omega} gives a formulation in this direction.

\smallskip

Throughout the paper, we fix a commutative ring $\bfk$ with identity and an element $\lambda \in \bfk$. Unless otherwise noted, we work in the categories of commutative $\bfk$-algebras with identity, with or without linear operators. All operators and tensor products are also taken over $\bfk$. Thus references to $\bfk$ will be suppressed unless doing so can cause confusion.
We let $\NN$ denote the additive monoid of
natural numbers $\{0,1,2,\ldots\}$ and
$\NN_+=\{ n\in \NN\mid n>0\}$ the positive integers.
Let $\delta_{i,j}, i, j\in\NN$ denote the Kronecker delta.
For categorical notations, we follow~\cite{Ma}.

\section{Background and the statement of the main theorem}
\mlabel{sec:coextension}

In this section we provide background to state the main theorem and prove preliminary results required in the proof of the main theorem.
In Section~\ref{ss:freecofree}, we review free Rota-Baxter algebras and cofree differential algebras, and the corresponding adjoint functor pairs. We also prove a property in a special case which will be applied repeatedly in the proof of the main theorem. In Section~\ref{ss:extn}, we start with the category of operated algebras and consider its enrichments by adding differential operators or Rota-Baxter operators. We give covers of operators or extensions of operators in a operated algebra to certain objects in these enriched categories. Building on these preparations, we state in Section~\ref{ss:main} the theorem on the equivalence of extensions or covers of operators, liftings of monads and existence of distributive laws, and the main theorem which gives a classification of the constraints for which each and hence all the equivalent conditions hold.

Additional details can be found in~\mcite{Gub,GK3,ZGK2}.
The proof of the main theorem will be given in the next section.

\subsection{Free Rota-Baxter algebras and cofree differential algebras}
\label{ss:freecofree}

Recall from~\cite{Gub,GK1} the construction of the free commutative Rota-Baxter algebra $\sha(A)$ of weight $\lambda$ on a commutative
algebra $A$ with identity $\bfone_A$. As a module, we have
$$\sha(A) = \bigoplus\limits_{i\in\NN_+}A^{\otimes i}=A\oplus (A\otimes A)\oplus (A\otimes A\otimes A)\oplus\cdots.$$

Define an operator $P_A$ on
$\sha(A)$ by assigning
\[ P_A( x_0\otimes x_1\otimes \cdots \otimes x_n):
=\bfone_A\otimes x_0\otimes x_1\otimes \cdots\otimes x_n \]
for all
$x_0\otimes x_1\otimes \cdots\otimes x_n\in A^{\otimes (n+1)}$
and extending by additivity.

Then by~\cite[Theorem~4.1]{GK1}, the module $\sha(A)$, with the mixable shuffle product, the operator $P_A$ and the natural embedding
$j_A:A\rightarrow \sha(A)$,
is a free Rota-Baxter algebra of weight $\lambda$ on $A$.
More precisely,
for any Rota-Baxter algebra $(R,P)$ of weight $\lambda$ and any
algebra homomorphism
$\varphi:A\rar R$, there exists
a unique Rota-Baxter algebra homomorphism
$\tilde{\varphi}:(\sha(A),P_A)\rar (R,P)$ such that
$\varphi = \tilde{\varphi} j_A$.

For later use, we give a class of Rota-Baxter algebras with non-zero Rota-Baxter operators.
\begin{exam}
Take $A = \bfk$ in $\sha(A)$. Then~~\cite[Proposition~6.1]{GK1} states that $\sha(\bfk)$ is an algebra with basis $z_i:=1^{\ot (i+1)}\in \bfk^{\ot (i+1)}$ for each $i\in \NN$.
The multiplication on $\sha(\bfk)$ is given by
\begin{equation}
z_m  z_n=\sum\limits_{j=0}^{m}{m+n-j\choose n}{n\choose j}\lambda^j z_{m+n-j}\quad  \text{for all}\  m, n\in\NN.
\mlabel{eq:tmng}
\end{equation}
In particular, when $\lambda=0$, one sees
\begin{equation}
z_m z_n={m+n\choose n}\,z_{m+n},
\mlabel{eq:tmn}
\end{equation}
giving the divided power algebra.

The identity element of $\sha(\bfk)$ is $z_0$ and the operator $P_{\bfk}:\sha(\bfk)\rightarrow \sha(\bfk)$ is given by
$$P_{\bfk}(z_i)=z_{i+1}\quad \text{for each}\ i\in \NN.$$

For a given $m\in \NN_+$, $I_m:=\oplus_{i\geq m}\bfk z_{i}$ is a Rota-Baxter ideal of $(\sha(\bfk), P_{\bfk})$, that is, $$\sha(\bfk)I_m\subseteq I_m, \quad P_{\bfk}(I_m)\subseteq I_m,$$
giving rise to the quotient Rota-Baxter algebra $(\sha(\bfk)/I_m, \overline{P_{\bfk}})$.
Then for $m\geq 2$,
$$\overline{P_{\bfk}}(\overline{z_{m-2}})=\overline{z_{m-1}}\neq \overline{0}, \quad \overline{P_{\bfk}}(\overline{z_{m-1}}) =\overline{z_m}=\overline{0}\in\sha(\bfk)/I_m.$$
\mlabel{ex:couex}
\end{exam}

We let $\ALG$ denote the category of commutative algebras, and let $\RBA_{\lambda}$, or simply $\RBA$, denote the category
of commutative Rota-Baxter algebras of weight $\lambda$. Then we have a functor $F:\ALG \rar \RBA$ given on
objects $A$ by $F(A) = (\sha(A),P_A)$ and on morphisms
$\varphi:A \rar B$ by $$F(\varphi)\left(\sum^{k}_{i=1}a_{i0}\otimes a_{i1}\otimes\cdots \otimes a_{in_{i}}\right)=\sum^{k}_{i=1}\varphi(a_{i0})\otimes \varphi(a_{i1})\otimes\cdots \otimes \varphi(a_{in_{i}})$$
for any $\sum\limits^{k}_{i=1}a_{i0}\otimes a_{i1}\otimes\cdots \otimes a_{in_{i}}\in\sha(A)$.

Then the universal property of the free Rota-Baxter algebras $\sha(A)$ has the following formulation.

\begin{prop}$($\cite[Corollary~2.4]{ZGK}$)$
The functor $F:\ALG \rightarrow \RBA$ defined above is the left adjoint of the forgetful functor
$U:\RBA \rightarrow \ALG$.
\mlabel{pp:adjrba}
\end{prop}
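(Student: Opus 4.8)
The plan is to read off the adjunction directly from the universal property of $\sha(A)$ just recalled, via the standard characterization of left adjoints in terms of universal arrows (see~\cite{Ma}).

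First I would check that $F$ is a genuine functor $\ALG\to\RBA$. For an algebra homomorphism $\varphi\colon A\to B$, the map $F(\varphi)\colon\sha(A)\to\sha(B)$ is, in each tensor degree $i\geq 1$, simply the tensor power $\varphi^{\otimes i}$; since the structure constants of the mixable shuffle product do not depend on the elements of $A$, the map $F(\varphi)$ is a homomorphism for the mixable shuffle products, and $F(\varphi)\circ P_A=P_B\circ F(\varphi)$ because both sides apply $\varphi$ slotwise and then prepend $\bfone$. Preservation of identities and composition is then immediate. Next I would note that the embeddings $j_A\colon A\to\sha(A)=U(F(A))$ assemble into a natural transformation $j\colon\id_{\ALG}\Rightarrow UF$: naturality is the equation $U(F(\varphi))\circ j_A=j_B\circ\varphi$, which holds because the restriction of $F(\varphi)$ to the first summand $A=A^{\otimes 1}$ of $\sha(A)$ is $\varphi$.

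The universal property quoted from~\cite[Theorem~4.1]{GK1} then says precisely that, for every commutative algebra $A$, every Rota-Baxter algebra $(R,P)$ of weight $\lambda$, and every algebra homomorphism $\varphi\colon A\to U(R,P)$, there is a unique Rota-Baxter algebra homomorphism $\tilde\varphi\colon F(A)\to(R,P)$ with $U(\tilde\varphi)\circ j_A=\varphi$; equivalently, $\langle F(A),j_A\rangle$ is a universal arrow from $A$ to $U$ for each object $A$ of $\ALG$. By the standard theorem relating universal arrows and adjunctions~\cite{Ma}, this is equivalent to $F$ being left adjoint to $U$ with unit $j$; the natural bijection $\Hom_{\RBA}(F(A),(R,P))\cong\Hom_{\ALG}(A,U(R,P))$ is $\psi\mapsto U(\psi)\circ j_A$, with inverse $\varphi\mapsto\tilde\varphi$, and the counit at $(R,P)$ is $\widetilde{\id_{U(R,P)}}$.

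I expect no real obstacle: all the mathematical content is already contained in the universal property of~\cite[Theorem~4.1]{GK1}, and what remains is the routine bookkeeping of functoriality of $F$, naturality of $j$, and the observation that ``universal arrow'' is exactly the hypothesis of the universal-arrow criterion for adjointness. I would therefore present the proof in a few lines, citing~\cite{Ma}.
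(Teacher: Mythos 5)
Your proposal is correct: the paper gives no proof of this proposition (it is quoted from \cite[Corollary~2.4]{ZGK}), and the intended justification is exactly the one you give, namely that the universal property of $(\sha(A),P_A,j_A)$ recalled just above the statement says that $\langle F(A),j_A\rangle$ is a universal arrow from $A$ to $U$, whence the adjunction by the standard criterion in \cite{Ma}. The functoriality and naturality checks you outline are the right routine bookkeeping, so nothing is missing.
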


Furthermore, the adjunction in Proposition~\ref{pp:adjrba} provides a monad $\mathbf{T}=\mathbf{T}_{\RBA}=\langle T,\eta,\mu\rangle$ on $\ALG$ giving Rota-Baxter algebras~\cite[\S~2.2]{ZGK}. Indeed, for every algebra $A$, $T(A)=\sha(A)$,
$\eta_{A}$ is the natural embedding from $A$ to $\sha(A)$, and $\mu_{A}:\sha(\sha(A))\rightarrow \sha(A)$ is extended additively from
 \begin{eqnarray*}
&&\mu_{A}( (a_{00}\otimes\cdots\otimes a_{0n_{0}})\otimes\cdots\otimes(a_{k0}\otimes\cdots\otimes a_{kn_{k}}))\\
&=&(a_{00}\otimes\cdots\otimes a_{0n_{0}})  P_{A}(\cdots P_{A}(a_{k0}\otimes\cdots\otimes a_{kn_{k}})\cdots),
\end{eqnarray*}
where
$$(a_{00}\otimes\cdots\otimes a_{0n_{0}})\otimes\cdots\otimes(a_{k0}\otimes\cdots\otimes a_{kn_{k}})\in\sha(\sha(A)) \,\,\text{with}\,\, a_{i0}\otimes\cdots\otimes a_{in_{i}}\in A^{\ot (n_{i}+1)}$$
for $n_0,\ldots,n_k\geq 0$ and $0\leq i\leq k$.

\smallskip

Next we review some background on differential algebras with weights, defined in ~(\mref{eq:der1}), and refer the reader to~\mcite{GK3} for details.

For any algebra $A$, let $A^{\NN}$ denote the $\bfk$-module of functions $f:\NN \rightarrow A$. We also view $f \in A^{\NN}$ as a sequence
$$(f_n)=(f_0,f_1,\cdots) \, \text{with} \, f_n:= f(n) \in A.$$
Following~\cite[\S~2.3]{GK3}, the {\bf $\lambda$-Hurwitz product} on $A^{\NN}$ is given by
\begin{equation}
(fg)_n = \sum_{k=0}^{n}\sum_{j=0}^{n-k} {n\choose k} {n-k\choose j}
\lambda^{k}f_{n-j}g_{k+j}\quad \text{for all}\  f, g\in R^\NN.
\mlabel{eq:hurprod}
\end{equation}
In the special case when $\lambda =0$, we have
\begin{equation}
(fg)_n =\sum_{j=0}^{n} {n\choose j} f_{n-j}g_{j}.
\mlabel{eq:hurprod0}
\end{equation}
With this product, $A^{\NN}$ is called the {\bf algebra of $\lambda$-Hurwitz series} over $A$.
Further define
\begin{equation*}
\partial_A:A^{\NN} \rightarrow A^{\NN}, \quad \partial_A(f)_n = f_{n+1}\quad\text{ for all }\  f \in A^{\NN}, n\in \NN.
\mlabel{eq:partial}
\end{equation*}
Then $\partial_A$ is a differential operator of weight $\lambda$ on $A^{\NN}$, making $(A^{\NN},\partial_A)$ into a  differential algebra of weight $\lambda$.
We also obtain a recursive formula for $(fg)_n$:
\begin{equation}
(fg)_{n+1}=(\partial_A(fg))_n =(\partial_A(f)g)_n+(f\partial_A(g))_n+
(\lambda\partial_A(f)\partial_A(g))_n \quad\text{ for all }\  f, g \in A^{\NN}, n\in \NN.
\label{eq:recHur}
\end{equation}

Let $\DIF$ denote the category of differential algebras of weight $\lambda$, and $G:\ALG \rightarrow \DIF$ be a functor given on objects $A$ by $G(A): = (A^{\NN}, \partial_A)$ and on morphisms
$\varphi:A \rightarrow B$ by
$$G(\varphi):=\varphi^{\NN}:(A^{\NN}, \partial_A)\rar (B^{\NN}, \partial_B),\quad (\varphi^{\NN}(f))_n=\varphi(f_n),\, f\in A^\NN, n\in \NN.$$

\begin{prop}$($\text{See}~\cite[Proposition~2.8]{GK3}$)$
The functor $G:\ALG \rightarrow \DIF$ is the right adjoint of the forgetful functor $V:\DIF \rightarrow \ALG$. In other words, the differential algebra $(A^{\NN},\partial_A)$, together with the algebra homomorphism
\begin{equation}
\varepsilon_{A}: A^{\NN}\rightarrow A, \quad \varepsilon_A(f): = f_0\quad\text{for all}\  f \in A^{\NN},
\mlabel{eq:vare}
\end{equation}
is a cofree differential algebra of weight $\lambda$ on the algebra $A$.
\mlabel{prop:cofree}
\end{prop}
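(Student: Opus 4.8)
The plan is to verify directly the universal property of the cofree object and then invoke the standard correspondence between adjunctions and universal morphisms~\cite{Ma}: it suffices to show that for every differential algebra $(D,d)$ of weight $\lambda$ and every algebra homomorphism $\varphi\colon D\to A$, there is a \emph{unique} morphism $\tilde\varphi\colon(D,d)\to(A^{\NN},\partial_A)$ in $\DIF$ with $\varepsilon_A\circ\tilde\varphi=\varphi$; assembling these data over all $A$ then yields the adjunction $V\dashv G$ with counit $\varepsilon$. The candidate for $\tilde\varphi$ is forced: if such a morphism exists, then iterating $\tilde\varphi\circ d=\partial_A\circ\tilde\varphi$ and reading the $0$th coordinate gives $(\tilde\varphi(r))_n=(\partial_A^{\,n}\tilde\varphi(r))_0=(\tilde\varphi(d^{\,n}(r)))_0=\varphi(d^{\,n}(r))$. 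So I would \emph{define} $\tilde\varphi\colon D\to A^{\NN}$ by $(\tilde\varphi(r))_n:=\varphi(d^{\,n}(r))$ and prove it has the required properties; uniqueness is then immediate from the same computation.

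Next I would check that $\tilde\varphi$ is an algebra homomorphism. Linearity is clear, and $\tilde\varphi(\bfone_D)=(\bfone_A,0,0,\dots)=\bfone_{A^{\NN}}$ since $d(\bfone_D)=0$. Compatibility with the operators is also immediate and, crucially, does not use multiplicativity: $(\partial_A\tilde\varphi(r))_n=(\tilde\varphi(r))_{n+1}=\varphi(d^{\,n+1}(r))=(\tilde\varphi(d(r)))_n$. The substantive point is that $\tilde\varphi$ is multiplicative, i.e.\ $(\tilde\varphi(rs))_n=(\tilde\varphi(r)\,\tilde\varphi(s))_n$ for all $n\in\NN$ and $r,s\in D$, where the right-hand product is the $\lambda$-Hurwitz product~\eqref{eq:hurprod}. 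I would prove the statement ``$(\tilde\varphi(uv))_n=(\tilde\varphi(u)\tilde\varphi(v))_n$ for all $u,v\in D$'' by induction on $n$. For $n=0$ both sides equal $\varphi(u)\varphi(v)=\varphi(uv)$. For the inductive step, write $(\tilde\varphi(rs))_{n+1}=\varphi(d^{\,n}(d(rs)))=(\tilde\varphi(d(rs)))_n$, expand $d(rs)=d(r)s+r\,d(s)+\lambda\,d(r)d(s)$ via the weight-$\lambda$ Leibniz rule~\eqref{eq:der1}, distribute $\tilde\varphi$ over the sum by linearity, apply the induction hypothesis at level $n$ to the three products $d(r)\cdot s$, $r\cdot d(s)$, $d(r)\cdot d(s)$, replace each $\tilde\varphi(d(-))$ by $\partial_A(\tilde\varphi(-))$ using the compatibility just proved, and finally invoke the recursion~\eqref{eq:recHur} for the Hurwitz product to recognize the result as $(\tilde\varphi(r)\tilde\varphi(s))_{n+1}$. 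Thus $\tilde\varphi$ is a morphism in $\DIF$.

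Finally $\varepsilon_A(\tilde\varphi(r))=(\tilde\varphi(r))_0=\varphi(d^{\,0}(r))=\varphi(r)$, so $\varepsilon_A\circ\tilde\varphi=\varphi$; and for uniqueness, any $\DIF$-morphism $\psi\colon(D,d)\to(A^{\NN},\partial_A)$ with $\varepsilon_A\circ\psi=\varphi$ satisfies $(\psi(r))_0=\varphi(r)$ and $(\psi(r))_{n+1}=(\partial_A\psi(r))_n=(\psi(d(r)))_n$, so by induction $(\psi(r))_n=(\psi(d^{\,n}(r)))_0=\varphi(d^{\,n}(r))=(\tilde\varphi(r))_n$, whence $\psi=\tilde\varphi$. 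Functoriality of $G$ and naturality of $\varepsilon$ follow routinely from the explicit formulas. I expect the only real obstacle to be the multiplicativity step: one must keep the induction synchronized with the double sum defining the $\lambda$-Hurwitz product and be careful to apply~\eqref{eq:recHur} with $f=\tilde\varphi(r)$ and $g=\tilde\varphi(s)$; everything else is formal. A more computational alternative would be to first establish, by induction, the higher Leibniz rule $d^{\,n}(xy)=\sum_{k=0}^{n}\sum_{j=0}^{n-k}\binom{n}{k}\binom{n-k}{j}\lambda^{k}d^{\,n-j}(x)d^{\,k+j}(y)$ and compare it termwise with~\eqref{eq:hurprod}, but the recursion-based argument sidesteps that binomial bookkeeping.
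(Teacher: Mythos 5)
Your proof is correct and is essentially the standard argument: the paper itself gives no proof of this proposition, deferring to \cite[Proposition~2.8]{GK3}, where the cofree object is constructed exactly as you do, via $(\tilde\varphi(r))_n=\varphi(d^{\,n}(r))$, with multiplicativity reduced to the weight-$\lambda$ Leibniz rule and the recursion~\eqref{eq:recHur} for the Hurwitz product. All the key points are in place — the forced form of $\tilde\varphi$, the use of $d(\bfone)=0$ for unitality, the correctly quantified induction hypothesis for multiplicativity, and the uniqueness argument — so nothing further is needed.
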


The adjunction in Proposition~\ref{prop:cofree} gives rise to a comonad $\C = \langle C,\varepsilon,\delta \rangle$ on $\ALG$ giving differential algebra~\cite[\S~3]{ZGK}. For every $A\in \ALG$, $C(A)=A^{\NN}$, $\varepsilon_A:A^{\NN} \rightarrow A$ is a surjection with $\varepsilon_A(f)= f_0$, and $\delta_A:A^{\NN} \rightarrow (A^{\NN})^{\NN}$ is defined by
$$(\delta_A(f)_m)_n = f_{m+n}, \quad f \in A^{\NN}, m, n \in \NN.$$
Fix a comonad $\tilde{\C}= \langle \tilde{C},\tilde{\varepsilon},\tilde{\delta} \rangle$ on $\RBA$. If the following identities hold
$$U\tilde{C}=CU,\quad U\tilde{\varepsilon}=\varepsilon U \,\,\text{and}\,\, U\tilde{\delta}=\delta U,$$
then $\tilde{\C}$ is said to lift the comonad $\mathbf{C}$.
Dually, a monad $\tilde{\T}= \langle \tilde{T},\tilde{\eta},\tilde{\mu} \rangle$ on $\DIF$ lifting the monad $\mathbf{T}$ from Proposition~\ref{pp:adjrba} satisfies
$$V\tilde{T}=TV,\quad V\tilde{\eta}=\eta V \,\,\text{and}\,\, V\tilde{\mu}=\mu V.$$ The monad $\T$ , comonad $\C$ and their lifting forms will be used in the formulation of Theorem~\ref{thm:main}.

\begin{exam}\label{ex:coud}{\rm
On the Rota-Baxter algebra $(\sha(\bfk), P_\bfk)$ in Example~\mref{ex:couex}, define
$$d: \sha(\bfk)\to \sha(\bfk), \quad d(z_0)=0, d(z_n)=z_{n-1}\quad \text{for all} \  n\in \NN_+.$$
Then $(\sha(\bfk),d)$ is a differential algebra of weight $\lambda$. By \cite[Corollary~3.7]{GK2}, the completion of $(\sha(\bfk),d)$ is isomorphic to the algebra $\bfk^\NN$ of Hurwitz series over $\bfk$.
}
\end{exam}

\subsection{Covers and extensions of operators} \label{ss:extn}

In~\mcite{ZGK2}, we also introduced a class of polynomials that serve as relations between a pair of operators as follows.
In the noncommutative polynomial algebra $\bfk\langle x, y\rangle$ in two variables $x$ and $y$, consider the subset
\begin{equation}
\calt:=xy+\bfk[x]+y\bfk[x] = \{
xy-(\phi(x) +y\psi(x))\,|\, \phi,\psi\in\bfk[x]\}.
\mlabel{eq:T}
\end{equation}
Let $q$ and $Q$ be two operators on an algebra $R$. Each $\omega:=\omega(x, y)\in\calt$ is regarded as a relation $\omega(q, Q)=0$ between $q$ and $Q$. As a special case, $\omega=xy-1$ is regarded as the relation $\omega(d, P)=dP-\id_R =0$ between the operators $d$ and $P$ in a differential Rota-Baxter algebra $(R, d, P)$~\cite{GK3} reflecting the First Fundamental Theorem of Calculus.

\begin{defn}
{\rm
An {\bf operated algebra}~\cite{Gop,Ku} is an algebra $R$ with a linear operator $Q$ on $R$, thus denoted as a pair $(R, Q)$. Let $\OA$ denote the category of operated algebras.
}
\end{defn}

As its enrichment, we have

\begin{defn}$($\cite[Definition~2.6]{ZGK2}$)$
{\rm
For a given $\omega\in\calt$ and $\lambda\in \bfk$, we say that the triple $(R,d,Q)$ is a
{\bf type $\omega$ operated differential algebra of weight $\lambda$} if
\begin{enumerate}
\item $(R,d)$ is a differential algebra of weight $\lambda$,
\item $(R,Q)$ is an operated algebra, and
\item $\omega(d, Q)=0$, that is,
   \begin{equation}
   dQ=\phi(d)+Q\psi(d).
   \mlabel{eq:dP}
   \end{equation}
\end{enumerate}
}
\mlabel{de:oda}
\end{defn}

There is a one-to-one correspondence between operators $\calp$ on $R^\NN$ and sequences $(\calp_n)$ of linear maps where, for each $n\in\NN$, $\calp_n:R^\NN\rightarrow R$ is given by
\begin{equation*}
\calp_n(f):=\calp(f)_n\quad\text{for all}\  f\in R^\NN.
\mlabel{eq:coexten}
\end{equation*}
For any operators $\mathcal{Q}, \mathcal{J}$ on $R^\NN$, and each $f\in R^\NN, n\in\NN$, we obtain
\begin{equation}
(\partial_R\mathcal{Q})_n (f)=(\partial_R(\mathcal{Q}(f)))_n=\mathcal{Q}_{n+1}(f),\quad (\mathcal{Q}\mathcal{J})_n(f)=(\mathcal{Q}(\mathcal{J}(f)))_n =\mathcal{Q}_n(\mathcal{J}(f)).
\mlabel{eq:pf0}
\end{equation}

We now recall the notion of a cover (called coextension in~\mcite{ZGK2}) of an operator on an algebra to the cofree differential algebra generated by this algebra.

\begin{defn}
For a given operator $Q:R\to R$, we call an operator $\lqb:R^\NN\to R^\NN$ a {\bf cover of $Q$ on $R^\NN$} if for all $f\in R^\NN$, we have $\lqb_0(f)=Q(f_0)$. That is, the following diagram commutes
\[\xymatrix{
R^\NN\ar[d]_{\varepsilon_R} \ar[r]^{\lqb}&R^\NN\ar[d]^{\varepsilon_R}\\
R \ar[r]_{Q}&R
}\]
where $\varepsilon_R$ is given in ~$($\mref{eq:vare}$)$.
\end{defn}

The operator $\lqb$ is called a cover of $Q$ because $\vep_R$ is surjective. For each $\omega\in\calt$, we established the existence and uniqueness of a cover as the following proposition shows.

\begin{prop}$($\cite[Proposition~2.6]{ZGK2}$)$
Let $Q$ be an operator on an algebra $R$. For a given $\omega=xy-(\phi(x)+y\psi(x))\in \calt$ with $\phi, \psi\in \bfk[x]$, $Q$ has a unique cover $\lqt:(R^\NN, \partial_R)\to (R^\NN, \partial_R)$ such that $\omega(\partial_R, \lqt) = 0$, that is$:$
\begin{equation}
\partial_R \lqt=\phi(\partial_R) +\lqt\psi(\partial_R).
\mlabel{eq:lpproperty}
\end{equation}
Thus the triple $(R^\NN, \partial_R, \lqt)$ is a type $\omega$ operated differential algebra.
\mlabel{prop:extrb}
\end{prop}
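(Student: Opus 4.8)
The plan is to construct the cover $\lqt$ explicitly via its components $(\lqt_n)_{n\in\NN}$ and verify that the prescribed recursion both defines an operator satisfying \eqref{eq:lpproperty} and is forced by that relation, which will give uniqueness for free. First I would unwind what \eqref{eq:lpproperty} says at the level of components, using \eqref{eq:pf0}: for each $f\in R^\NN$ and each $n\in\NN$, the left side $(\partial_R\lqt)_n(f)$ equals $\lqt_{n+1}(f)$, while the right side is $\phi(\partial_R)_n(f) + (\lqt\psi(\partial_R))_n(f) = \phi(\partial_R)_n(f) + \lqt_n(\psi(\partial_R)(f))$. Since $\partial_R$ acts as the shift, $\phi(\partial_R)$ and $\psi(\partial_R)$ are honest operators on $R^\NN$ built from finitely many shifts, and their $n$-th components are explicit $\bfk$-linear combinations of the coordinates $f_n, f_{n+1}, \dots$. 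Thus \eqref{eq:lpproperty} is equivalent to the single recursion
\[
\lqt_{n+1}(f) = \phi(\partial_R)_n(f) + \lqt_n\bigl(\psi(\partial_R)(f)\bigr)\qquad (n\in\NN),
\]
together with the boundary condition $\lqt_0(f) = Q(f_0)$ coming from the definition of a cover.

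Next I would turn this into a well-posed recursive construction. The boundary condition pins down $\lqt_0$, and the displayed recursion then determines $\lqt_{n+1}$ from $\lqt_n$ by substituting the operator $\psi(\partial_R)$ into the argument and adding the known map $\phi(\partial_R)_n$. By induction, each $\lqt_n:R^\NN\to R$ is thereby uniquely determined as a $\bfk$-linear map (linearity is preserved at each step because $\phi(\partial_R)_n$ is linear, $\psi(\partial_R)$ is an algebra endomorphism hence in particular linear, and composition/sum of linear maps is linear). Assembling the sequence $(\lqt_n)$ back into an operator $\lqt:R^\NN\to R^\NN$ via $\lqt(f)_n := \lqt_n(f)$ uses the one-to-one correspondence between operators on $R^\NN$ and sequences of components recalled just before the definition of a cover. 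By construction $\lqt_0(f)=Q(f_0)$, so $\lqt$ is a cover of $Q$; and by construction it satisfies the component form of \eqref{eq:lpproperty}, hence \eqref{eq:lpproperty} itself. Uniqueness is immediate: any operator satisfying the cover condition and \eqref{eq:lpproperty} must have components obeying the same boundary condition and recursion, which determine them uniquely.

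The one genuine point needing care — and the step I expect to be the main obstacle — is checking that the operator $\lqt$ produced this way is actually a \emph{differential-algebra endomorphism-free} operator in the sense demanded, i.e.\ simply a $\bfk$-linear operator on $R^\NN$ so that $(R^\NN,\partial_R,\lqt)$ is a type $\omega$ operated differential algebra; this requires verifying that $(R^\NN,\partial_R)$ is already a differential algebra of weight $\lambda$ (Proposition~\ref{prop:cofree}), that $(R^\NN,\lqt)$ is an operated algebra (immediate from linearity), and that $\omega(\partial_R,\lqt)=0$ (which is exactly \eqref{eq:lpproperty}). One should also double-check the bookkeeping in rewriting $\phi(\partial_R)$ and $\psi(\partial_R)$ in components: if $\phi(x)=\sum_i c_i x^i$ then $\phi(\partial_R)_n(f)=\sum_i c_i f_{n+i}$, and similarly for $\psi$; substituting $\psi(\partial_R)(f)$ into $\lqt_n$ and iterating shows $\lqt_n$ is a finite $\bfk$-linear combination of the maps $f\mapsto Q(f_j)$ and $f\mapsto f_j$, so everything stays within well-defined finite sums with no convergence issues. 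Once these routine verifications are in place, the proposition follows. This argument is a mild elaboration of \cite[Proposition~2.6]{ZGK2}, recorded here for completeness and because the explicit component recursion will be reused later in the proof of Theorem~\ref{thm:lr}.
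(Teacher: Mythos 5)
Your construction is correct and is exactly the route the paper intends: the relation \eqref{eq:lpproperty} is unwound via \eqref{eq:pf0} into the component recursion $\lqt_{n+1}=\phi(\partial_R)_n+\lqt_n\psi(\partial_R)$ (recorded in the paper as \eqref{eq:lptn}), which together with the boundary condition $\lqt_0(f)=Q(f_0)$ simultaneously constructs $\lqt$ and forces its uniqueness. One small correction: $\psi(\partial_R)$ is merely a $\bfk$-linear combination of powers of the linear operator $\partial_R$, not an algebra endomorphism of $R^\NN$ (and $\partial_R$ itself is a derivation of weight $\lambda$, not a homomorphism); since you only use its linearity, the argument is unaffected.
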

For a given $\omega\in\calt$, let $\ODA_\omega$ denote the category of type $\omega$ operated differential algebras of weight $\lambda$ in Definition~\mref{de:oda}. Thanks to Proposition~\mref{prop:extrb}, we obtain a functor
\begin{equation}
 G_\omega: \OA \to \ODA_\omega.
 \mlabel{eq:coefun}
\end{equation}

Applying ~(\mref{eq:pf0}), ~(\mref{eq:lpproperty}) is equivalent to
\begin{equation}
\lqt_{n+1}=\phi(\partial_R)_n +\lqt_n\psi(\partial_R)\quad\text{for all}\  n\in\NN.
\mlabel{eq:lptn}
\end{equation}

The following equivalent characterizations of a Rota-Baxter algebra in terms of covers will be useful in the proof of our main result Theorem~\ref{thm:lr0}.

\begin{prop}
Let $(R, Q)$ be an operated algebra and $\lqb$ be any cover of $Q$ to $R^\NN$.
\begin{enumerate}
\item
$(R, Q)$ is a Rota-Baxter algebra of weight $\lambda$ if and only if
\begin{equation}
\lqb_0(f)\lqb_0(g)=\lqb_0(\lqb(f)g)+
\lqb_0(f\lqb(g))+\lambda \lqb_0(fg) \quad \text{for all} \  f, g \in R^\NN.
\mlabel{eq:q0}
\end{equation}
\mlabel{it:pq0}
\item
$(R^\NN, \lqb)$ is a Rota-Baxter algebra of weight $\lambda$ if and only if for all $f, g \in R^\NN$, $n\in\NN$,
\begin{equation}
\sum_{k=0}^{n}\sum_{j=0}^{n-k} {n\choose k} {n-k\choose j}
\lambda^{k}\lqb_{n-j}(f)\lqb_{k+j}(g)=\lqb_n(\lqb(f)g)+
\lqb_n(f\lqb(g))+\lambda \lqb_n(fg).
\mlabel{eq:qn}
\end{equation}
\mlabel{it:pqn}
\end{enumerate}
\mlabel{prop:qq}
\end{prop}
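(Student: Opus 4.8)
The plan is to prove both equivalences by working with the recursive structure of covers and the $\lambda$-Hurwitz product, reducing the Rota-Baxter identity on one algebra to a family of identities indexed by $\NN$ on the other.

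For part \eqref{it:pq0}, the key observation is that the Rota-Baxter identity for $(R,Q)$ is the identity
$$Q(a)Q(b)=Q(Q(a)b)+Q(aQ(b))+\lambda Q(ab)\quad\text{for all }a,b\in R,$$
and since $\varepsilon_R:R^\NN\to R$ is a surjective algebra homomorphism with $\varepsilon_R(f)=f_0$ and $\lqb_0(f)=Q(f_0)$, the right-hand side of \eqref{eq:q0} evaluates to $Q(Q(f_0)g_0)+Q(f_0Q(g_0))+\lambda Q(f_0g_0)$ while the left-hand side is $Q(f_0)Q(g_0)$. First I would note that $\lqb_0(\lqb(f)g)=Q((\lqb(f)g)_0)=Q(\lqb_0(f)g_0)=Q(Q(f_0)g_0)$, using that the $0$-th component of a product in $R^\NN$ is the product of the $0$-th components (visible from \eqref{eq:hurprod}), and similarly for the other two terms. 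Thus \eqref{eq:q0} holds for all $f,g$ if and only if the Rota-Baxter identity holds for all $a=f_0,b=g_0$ in the image of $\varepsilon_R$, which is all of $R$ by surjectivity. This direction is essentially a translation and should be routine.

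For part \eqref{it:pqn}, by definition $(R^\NN,\lqb)$ is a Rota-Baxter algebra of weight $\lambda$ if and only if, for all $f,g\in R^\NN$,
$$\lqb(f)\lqb(g)=\lqb(\lqb(f)g)+\lqb(f\lqb(g))+\lambda\lqb(fg)$$
in $R^\NN$, which holds if and only if it holds componentwise, i.e. the $n$-th components agree for every $n\in\NN$. The $n$-th component of the left-hand side is computed by the $\lambda$-Hurwitz product formula \eqref{eq:hurprod} applied to the two sequences $(\lqb_m(f))_m$ and $(\lqb_m(g))_m$, which gives exactly
$$\sum_{k=0}^{n}\sum_{j=0}^{n-k}{n\choose k}{n-k\choose j}\lambda^k\lqb_{n-j}(f)\lqb_{k+j}(g),$$
the left-hand side of \eqref{eq:qn}. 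The $n$-th components of the three terms on the right are $\lqb_n(\lqb(f)g)$, $\lqb_n(f\lqb(g))$ and $\lambda\lqb_n(fg)$ by the defining relation $\calp_n(h)=\calp(h)_n$ for any operator $\calp$ on $R^\NN$, as recorded in \eqref{eq:pf0}. Hence \eqref{eq:qn} for all $n$ is precisely the componentwise form of the Rota-Baxter identity for $\lqb$.

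I do not anticipate a genuine obstacle here; the statement is a bookkeeping unwinding of definitions. The one point requiring a little care is matching indices in the double sum: one must check that the $\lambda$-Hurwitz product of the sequences $\lqb(f)$ and $\lqb(g)$, whose $m$-th entries are $\lqb_m(f)$ and $\lqb_m(g)$ respectively, reproduces the exact index pattern $\lqb_{n-j}(f)\lqb_{k+j}(g)$ with binomial coefficients ${n\choose k}{n-k\choose j}$ — but this is literally formula \eqref{eq:hurprod} with $A=R$, so it is immediate. No hypothesis is needed beyond that $\lqb$ is \emph{some} cover of $Q$; the arguments never use uniqueness or the constraint $\omega$, which is why the characterization is stated for an arbitrary cover.
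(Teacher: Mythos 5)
Your proof is correct and is exactly the ``straightforward'' unwinding that the paper has in mind (it omits the proof of Proposition~\ref{prop:qq} entirely, remarking only that it is straightforward): part~(i) reduces to the Rota--Baxter identity on $R$ via $\lqb_0(f)=Q(f_0)$ and $(fg)_0=f_0g_0$, and part~(ii) is the componentwise form of the identity on $R^\NN$ via the $\lambda$-Hurwitz product formula~\eqref{eq:hurprod} and~\eqref{eq:pf0}. Your closing observation that only the cover condition $\lqb_0(f)=Q(f_0)$ (and not uniqueness or the constraint $\omega$) is used is also accurate.
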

The proof of Proposition~\mref{prop:qq} is straightforward.

The following special case of Proposition~\mref{prop:qq}.(\mref{it:pqn}) will be used repeatedly in the proof of Theorem~\mref{thm:lr}.(\mref{it:weight0}). When $\lambda=0$, ~(\mref{eq:qn}) becomes
\begin{equation*}
\sum_{j=0}^{n} {n\choose j}
\lqb_{n-j}(f)\lqb_{j}(g)=\lqb_n(\lqb(f)g)+
\lqb_n(f\lqb(g)) \quad\text{for all }\ f, g \in R^\NN, n\in\NN.
\mlabel{eq:qn0}
\end{equation*}
As a consequence, we have

\begin{coro}
Let $(R,P)$ be a Rota-Baxter algebra of weight $0$. If there are $f, g\in R^\NN$ such that
\begin{equation}
\left(\lpt_1(\lpt(f)g)+\lpt_1(f\lpt(g))\right)-
\left(\lpt_0(f)\lpt_1(g)+\lpt_1(f)\lpt_0(g)\right)
\neq 0,
\mlabel{eq:qn01}
\end{equation}
then the cover $\lpt$ of $P$ to $(R^\NN, \partial_R)$ is not a Rota-Baxter operator of weight $0$.
\mlabel{co:qq0}
\end{coro}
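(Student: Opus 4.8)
The plan is to read this corollary off as the $n=1$, weight-$0$ instance of Proposition~\mref{prop:qq}.(\mref{it:pqn}), taken in contrapositive form. Since Proposition~\mref{prop:qq} is stated for an \emph{arbitrary} cover of the given operator, I would apply part~(\mref{it:pqn}) to the particular cover $\lpt$ of $P$, with the weight specialized to $\lambda=0$. It then says that $(R^\NN,\lpt)$ is a Rota-Baxter algebra of weight $0$ if and only if
\[\sum_{j=0}^{n}{n\choose j}\lpt_{n-j}(f)\lpt_{j}(g)=\lpt_n(\lpt(f)g)+\lpt_n(f\lpt(g))\]
holds for all $f,g\in R^\NN$ and all $n\in\NN$.

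Next I would simply single out the identity for $n=1$. Since ${1\choose 0}={1\choose 1}=1$, its left-hand side is $\lpt_1(f)\lpt_0(g)+\lpt_0(f)\lpt_1(g)$, so the $n=1$ case is precisely the statement that
\[\bigl(\lpt_1(\lpt(f)g)+\lpt_1(f\lpt(g))\bigr)-\bigl(\lpt_0(f)\lpt_1(g)+\lpt_1(f)\lpt_0(g)\bigr)=0.\]
Hence, if $(R^\NN,\lpt)$ were a Rota-Baxter algebra of weight $0$, the expression in~(\mref{eq:qn01}) would vanish for every pair $f,g\in R^\NN$. Taking the contrapositive yields exactly the claim: if~(\mref{eq:qn01}) is nonzero for some $f,g\in R^\NN$, then $(R^\NN,\lpt)$ is not a Rota-Baxter algebra of weight $0$, i.e. $\lpt$ is not a Rota-Baxter operator of weight $0$.

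I do not expect any genuine obstacle: all the work is already packaged in Proposition~\mref{prop:qq}, whose proof the paper notes is straightforward, and this corollary is a direct specialization ($n=1$, $\lambda=0$). The only point demanding a little care is the bookkeeping of the binomial coefficients when $n=1$, so that the sum on the left matches exactly the two bracketed products appearing in~(\mref{eq:qn01}). One may also observe that the hypothesis that $(R,P)$ itself be a Rota-Baxter algebra of weight $0$ is not actually used in this implication; it is retained only because the corollary is meant to be applied to bona fide Rota-Baxter operators $P$, in order to exhibit ones whose covers $\lpt$ fail to be Rota-Baxter, which is what will be needed in the proof of Theorem~\mref{thm:lr}.(\mref{it:weight0}).
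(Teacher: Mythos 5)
Your proposal is correct and matches the paper's own (implicit) argument: the corollary is presented there exactly as the contrapositive of the $n=1$, $\lambda=0$ instance of Proposition~\mref{prop:qq}.(\mref{it:pqn}), with the binomial coefficients $\binom{1}{0}=\binom{1}{1}=1$ accounting for the two products on the left. Your side remark that the Rota-Baxter hypothesis on $(R,P)$ is not needed for this implication is also accurate.
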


Next we recall the definition of extensions of operators in~\cite{ZGK2}.
\begin{defn}
For a given operator $q:R\to R$ satisfying $q(\bfone_R)=0$, we call an operator $\lqs:\sha(R)\to \sha(R)$ an {\bf extension} of $q$ to $\sha(R)$ if $\lqs|_R=q$. That is, the following diagram commutes
\[\xymatrix{
R\ar[d]_{j_R} \ar[r]^{q}&R\ar[d]^{j_R}\\
\sha(R) \ar[r]_{\lqs}&\sha(R)
}\]
where $j_R:R\rightarrow \sha(R)$ is a natural embedding.
\end{defn}

Since $\bfone_{\ssha(R)}=\bfone_R$ by the definition of the mixable shuffle product on $\sha(R)$, we have $\lqs(\bfone_{\ssha(R)})=q(\bfone_R)=0$ for an extension $\lqs$ of $q$ to $\sha(R)$.

\begin{defn}$($\cite[Definition~2.9]{ZGK2}$)$
For a given $\omega\in\calt$ and $\lambda\in \bfk$, we say that the triple $(R,q,P)$ is a
{\bf type $\omega$ operated Rota-Baxter algebra of weight $\lambda$} if
\begin{enumerate}
\item $(R,q)$ is an operated algebra with the property $q(\bfone_R)=0$,
\item $(R,P)$ is a Rota-Baxter algebra of weight $\lambda$, and
\item $\omega(q, P)=0$, that is,
   \begin{equation*}
   qP=\phi(q)+P\psi(q).
   \mlabel{eq:qP}
   \end{equation*}
\end{enumerate}
\end{defn}

\begin{prop}$($\cite[Proposition~2.11]{ZGK2}$)$
Let $(R, q)$ be an operated algebra where the operator $q$ satisfies $q(\bfone_R)=0$. For a given $\omega=xy-(\phi(x)+y\psi(x))\in \calt$ with $\phi, \psi\in\bfk[x]$, $q$ has a unique extension $\lqst: (\sha(R), P_R)\rightarrow (\sha(R), P_R)$ with the following property: for $u=u_0\ot u'\in R^{\ot (n+1)}$ with $u'\in R^{\ot n}$,
\begin{equation}
\lqst(u)=q(u_0)\ot u'+(u_0+\lambda q(u_0))(\phi(\lqst)+ P_R\psi(\lqst))(u')
\mlabel{eq:1leibdform}
\end{equation}
and
\begin{equation}
\lqst(\oplus_{i=1}^{n}R^{\ot i})\subseteq\oplus_{i=1}^{n}R^{\ot i}\quad\text{for each}\  n\in\NN_+.
\mlabel{eq:qcon}
\end{equation}
The triple
$(\sha(R), \lqst, P_R)$ is a type $\omega$ operated Rota-Baxter algebra.
\mlabel{prop:extd}
\end{prop}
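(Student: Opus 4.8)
The plan is to construct the extension $\lqst$ on $\sha(R)$ directly by induction on the tensor degree, using the recursive formula~\eqref{eq:1leibdform} as the \emph{definition}, and then to verify that the resulting linear map is indeed an operator satisfying $\omega(\lqst, P_R)=0$ together with the filtration condition~\eqref{eq:qcon}. More precisely, for $u = u_0 \ot u' \in R^{\ot(n+1)}$ with $u' \in R^{\ot n}$, I would set $\lqst(u_0) := q(u_0)$ in degree one (this is forced by $\lqs|_R = q$), and then define $\lqst$ on $R^{\ot(n+1)}$ assuming it is already defined on $\oplus_{i=1}^n R^{\ot i}$: the right-hand side of~\eqref{eq:1leibdform} only involves $\lqst$ applied to $u'\in R^{\ot n}$ (inside $\phi(\lqst)$ and $\psi(\lqst)$, which are polynomials in $\lqst$, hence lower the relevant bookkeeping once one checks $P_R$ raises degree by one and $\lqst$ preserves the filtration), so the recursion is well-posed. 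A key preliminary observation is that $P_R$ maps $R^{\ot i}$ into $R^{\ot(i+1)}$, so $\phi(\lqst) + P_R\psi(\lqst)$ applied to $u' \in R^{\ot n}$ lands in $\oplus_{i=1}^{n+1} R^{\ot i}$ provided $\lqst$ already satisfies~\eqref{eq:qcon} up to degree $n$; multiplying by the degree-one element $(u_0 + \lambda q(u_0))$ via the mixable shuffle product keeps us inside $\oplus_{i=1}^{n+1} R^{\ot i}$, which gives~\eqref{eq:qcon} at the next stage.

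Next I would verify the defining relation $\lqst P_R = \phi(\lqst) + P_R \psi(\lqst)$. Take a generic $v = x_0 \ot x_1 \ot \cdots \ot x_n \in R^{\ot(n+1)}$; then $P_R(v) = \bfone_R \ot x_0 \ot \cdots \ot x_n$, which has head component $u_0 = \bfone_R$ and tail $u' = v$. Plugging $u_0 = \bfone_R$ into~\eqref{eq:1leibdform} and using $q(\bfone_R) = 0$ collapses the first term $q(u_0)\ot u'$ and the factor $(u_0 + \lambda q(u_0))$ becomes $\bfone_R$, so $\lqst(P_R(v)) = (\phi(\lqst) + P_R\psi(\lqst))(v)$, which is exactly the desired identity evaluated at $v$. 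Extending by additivity over all of $\sha(R)$ gives~\eqref{eq:lpproperty}-analogue, i.e. condition (iii) of being a type $\omega$ operated Rota-Baxter algebra. Since $(\sha(R), P_R)$ is already a Rota-Baxter algebra of weight $\lambda$ by~\cite[Theorem~4.1]{GK1} and $\lqst$ is linear with $\lqst(\bfone_{\ssha(R)}) = q(\bfone_R) = 0$, conditions (i) and (ii) hold as well.

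For \emph{uniqueness}, I would argue that any extension $\lqs$ of $q$ satisfying $\omega(\lqs, P_R) = 0$ and~\eqref{eq:qcon} must agree with $\lqst$, again by induction on degree. In degree one this is the hypothesis $\lqs|_R = q$. For the inductive step, the content is that every element of $R^{\ot(n+1)}$ can be rewritten, modulo lower-degree terms, using the mixable shuffle product and the operator $P_R$, in a form to which the relation $\lqs P_R = \phi(\lqs) + P_R\psi(\lqs)$ applies — concretely, $u_0 \ot u' = (u_0 + \lambda q(u_0) \text{-correction}) \cdot P_R(u') + (\text{lower degree})$ type decompositions that the formula~\eqref{eq:1leibdform} encodes; combined with the Leibniz-type rule forced by compatibility with the product, this pins down $\lqs$ on $R^{\ot(n+1)}$ in terms of its values on lower tensor powers. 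The filtration condition~\eqref{eq:qcon} is what guarantees that the "lower degree" remainders are genuinely lower and the induction closes.

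The main obstacle I anticipate is the bookkeeping in the inductive step showing that formula~\eqref{eq:1leibdform} is simultaneously (a) well-defined (the polynomial expressions $\phi(\lqst), \psi(\lqst)$ are only legitimate once one knows $\lqst$ is already defined on the relevant subspaces and preserves the filtration, so well-definedness and~\eqref{eq:qcon} must be proved together in a single intertwined induction) and (b) compatible with the mixable shuffle product, since one must check that $\lqst$ so defined actually behaves as a single coherent linear operator rather than depending on the choice of splitting $u = u_0 \ot u'$ — this requires reconciling~\eqref{eq:1leibdform} with the recursion~\eqref{eq:recHur}-style identities governing how $P_R$ and the product interact, i.e.\ an associativity/consistency check analogous to the weight-$\lambda$ differential operator extension in~\cite{GK3}. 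Once that coherence is established, the verification of $\omega(\lqst, P_R)=0$ and uniqueness are comparatively routine.
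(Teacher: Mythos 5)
Your existence argument is sound and is the natural one: define $\lqst|_R=q$, use \eqref{eq:1leibdform} as a recursive definition on $R^{\ot(n+1)}$, and prove well-definedness together with the filtration property \eqref{eq:qcon} in one intertwined induction (using that $P_R$ raises tensor degree by one while multiplication by the degree-one element $u_0+\lambda q(u_0)$ preserves it); the identity $\lqst P_R=\phi(\lqst)+P_R\psi(\lqst)$ then falls out by evaluating \eqref{eq:1leibdform} at $u_0=\bfone_R$ and using $q(\bfone_R)=0$. One clarification: the coherence check you flag as the main obstacle is not actually required. A type $\omega$ operated Rota-Baxter algebra only demands that $\lqst$ be a linear operator killing $\bfone$ and satisfying $\omega(\lqst,P_R)=0$; no Leibniz-type compatibility between $\lqst$ and the mixable shuffle product is part of the statement. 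The only well-definedness issue is that the right-hand side of \eqref{eq:1leibdform} be multilinear in the tensor factors of $u$, which is immediate from the linearity of $q$ and of $\phi(\lqst)+P_R\psi(\lqst)$ (available inductively) and the bilinearity of the product.

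The genuine gap is in your uniqueness argument, which targets the wrong statement and would not close. The proposition asserts uniqueness among extensions satisfying \eqref{eq:1leibdform} and \eqref{eq:qcon}; this is immediate, because \eqref{eq:1leibdform} together with \eqref{eq:qcon} expresses $\lqst$ on $R^{\ot(n+1)}$ entirely in terms of its values on $\oplus_{i=1}^{n}R^{\ot i}$, with base case $\lqst|_R=q$. You instead try to show that any extension $\lqs$ with $\omega(\lqs,P_R)=0$ and \eqref{eq:qcon} agrees with $\lqst$, and for the inductive step you invoke ``the Leibniz-type rule forced by compatibility with the product.'' No such rule is forced: $\lqs$ is only assumed linear, and the relation $\lqs P_R=\phi(\lqs)+P_R\psi(\lqs)$ constrains $\lqs$ only on the image of $P_R$, namely $\bfone_R\ot\sha(R)$. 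Although $u_0\ot u'=u_0\,P_R(u')$, there is no hypothesis governing how $\lqs$ interacts with multiplication by $u_0$, so $\lqs(u_0\ot u')$ for $u_0\notin\bfk\bfone_R$ is not determined. Concretely, for $\omega=xy$ the relation reads $\lqs P_R=0$, and any linear operator with $\lqs|_R=q$, $\lqs(\bfone_R\ot w)=0$ and arbitrary filtration-preserving values on the remaining part of $R^{\ot 2},R^{\ot 3},\dots$ satisfies your hypotheses, while \eqref{eq:1leibdform} singles out $\lqst(u_0\ot u')=q(u_0)\ot u'$. Replace this step by the direct induction on \eqref{eq:1leibdform} and the proof is complete.
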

We let $\OA_0$ denote the category of operated algebras $(R, q)$ with the property $q(\bfone_R)=0$. Thus $\DIF$ is a subcategory of $\OA_0$. Let $\ORB_\omega$ denote the category of type $\omega$ operated Rota-Baxter algebras of weight $\lambda$.
Proposition~\mref{prop:extd} gives a functor
\begin{equation}
 F_\omega: \OA_0 \to \ORB_\omega.
 \mlabel{eq:extfun}
\end{equation}

As a generalization of a differential Rota-Baxter algebra, we introduced in~\mcite{ZGK2} the concept of type $\omega$ differential Rota-Baxter algebras.

\begin{defn}
For a given $\omega\in\calt$ and $\lambda\in \bfk$, we say that the triple $(R, d, P)$ is a
{\bf type $\omega$ differential Rota-Baxter algebra of weight $\lambda$} if
\begin{enumerate}
\item $(R,d)$ is a differential algebra of weight $\lambda$,
\item $(R,P)$ is a Rota-Baxter algebra of weight $\lambda$, and
\item $\omega(d, P)=0$, that is,
   \begin{equation}
   dP=\phi(d)+P\psi(d).
   \mlabel{eq:dP1}
   \end{equation}
\end{enumerate}
\mlabel{de:DRBt}
\end{defn}
See~\cite[Example~3.5]{ZGK2} for examples of type $\omega$ differential Rota-Baxter algebras from analysis.

The category of type $\omega$ differential Rota-Baxter algebras of weight $\lambda$ will be denoted by $\DRB_\omega$. Note that $\DRB_\omega$ is a subcategory of $\ORB_\omega$ (resp., $\ODA_\omega$).

In the subsequent subsection, we will provide conditions ensuring that the restriction of the functor $G_\omega:\OA\to \ODA_\omega$ to the subcategory $\RBA$ of $\OA$ gives a functor $\RBA \to \DRB_\omega$. Likewise for $F_\omega$, as indicated in the following diagram.

\begin{equation*}
\xymatrix{
&\ODA_\omega&\DRB_\omega\ar@{_{(}->}[l]\ar@{^{(}->}[r]&\ORB_\omega&\\
&&&&\\
\OA\ar^{G_\omega}[ruu]&\, \RBA \ar@{_{(}->}[l]\ar^{G_\omega}[ruu]&&\DIF \ar@{^{(}->}[r]\ar_{F_\omega}[luu] &\OA_0\ar_{F_\omega}[luu]
}
\mlabel{eq:cat}
\end{equation*}

\subsection{Main results}
\label{ss:main}

In this section, assume that $\bfk$ is a domain of characteristic $0$. As in ~(\mref{eq:T}), let
$ \calt:=xy+\bfk[x]+y\bfk[x].$
The following theorem shows that the existences of covers of Rota-Baxter operators, and of extensions of differential operators, of liftings of monads and comonads, and of mixed distributive laws are equivalent.

\begin{theorem}$($\cite[Theorem~3.15]{ZGK2}$)$
Let $\omega\in\calt$ be given. The following statements are equivalent:
\begin{enumerate}
\item
For every Rota-Baxter operator $P$ on every algebra $R$, the unique cover $\lpt$ of $P$ to $R^\NN$ given in Proposition~\mref{prop:extrb} is a Rota-Baxter operator.
\mlabel{it:extrb}
\item
For every differential operator $d$ on every algebra $R$, the unique extension $\ldt$ of $d$ to $\sha(R)$ given in Proposition~\mref{prop:extd} is a differential operator.
\mlabel{it:extd}
\item
The functor $G_\omega:\OA\to \ODA_\omega$ in ~(\mref{eq:coefun}) restricts to a functor $G_\omega:\RBA\to \DRB_\omega$.
\mlabel{it:coefun}
\item
The functor $F_\omega:\OA_0\to \ORB_\omega$ in ~(\mref{eq:extfun}) restricts to a functor $F_\omega:\DIF\to \DRB_\omega$.
\mlabel{it:extfun}
\item
There exist a comonad $\tilde{\C}= \langle \tilde{C},\tilde{\varepsilon},\tilde{\delta} \rangle$ on $\RBA$ lifting the comonad $\mathbf{C}$ from Proposition~\ref{prop:cofree}, where $\tilde{C}(R,P):=(R^{\NN}, \widetilde{P})$, and a category isomorphism $\widetilde{H}:\DRB_\omega\rightarrow\RBA_{\tilde{\C}}$ over $\RBA$ given by
$$\widetilde{H}(R, d, P):=\langle (R, P), \theta_{(R, d, P)}:(R, P)\rightarrow (R^{\NN}, \widetilde{P})\rangle.$$ Here $\theta_{(R, d, P)}(u)_{n}:=d^n(u)$ for all $u\in R, n\in\NN$.
\mlabel{it:liftcomo}
\item
There exist a monad $\tilde{\T}= \langle \tilde{T},\tilde{\eta},\tilde{\mu} \rangle$ on $\DIF$ lifting the monad $\mathbf{T}$ from Proposition~\ref{pp:adjrba}, where $\tilde{T}(R,d):=(\sha(R),\tilde{d})$, and a category isomorphism $\widetilde{K}:\DRB_\omega\rightarrow\DIF^{\tilde{\T}}$ over $\DIF$ given by $$\widetilde{K}(R, d, P):=\langle (R, d), \vartheta_{(R, d, P)}:(\sha(R),\tilde{d})\rightarrow (R, d)\rangle .$$
Here for every $v_0\ot v_1\ot\cdots \ot v_m\in \sha(R)$,
$$\vartheta_{(R, d, P)}(v_0\ot v_1\ot\cdots \ot v_m):=v_0P(v_1P(\cdots P(v_m)\cdots)).$$
\mlabel{it:liftmo}
\vspace{-0.15in}
\item
There is a mixed distributive law $\beta:TC\rightarrow CT$ such that $(\ALG_{\C})^{\tilde{\T}_\beta}$ is isomorphic to the category $\DRB_\omega$, where $\tilde{\T}_\beta$ is a lifting monad of $\T$ given by the mixed distributive law $\beta$.
\mlabel{it:mdl}
\end{enumerate}
\mlabel{thm:main}
\end{theorem}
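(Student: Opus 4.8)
The plan is to prove the seven conditions equivalent through a single chain
\[
(\ref{it:extrb})\Leftrightarrow(\ref{it:coefun})\Leftrightarrow(\ref{it:liftcomo})\Leftrightarrow(\ref{it:mdl})\Leftrightarrow(\ref{it:liftmo})\Leftrightarrow(\ref{it:extfun})\Leftrightarrow(\ref{it:extd}),
\]
placing the two ``concrete'' statements (\ref{it:extrb}) and (\ref{it:extd}) at the ends and their categorical incarnations along the arc between. The two equivalences at the ends are pure unwindings: by Proposition~\ref{prop:extrb} the functor $G_\omega$ sends $(R,Q)$ to $(R^\NN,\partial_R,\lqt)$, so for a Rota-Baxter algebra $(R,P)$ the object $G_\omega(R,P)$ lies in $\DRB_\omega$ precisely when $(R^\NN,\lpt)$ is a Rota-Baxter algebra of weight $\lambda$; since $G_\omega$ is already a functor into $\ODA_\omega$ and the forgetful functors are faithful, morphisms impose nothing further, so (\ref{it:coefun}) is exactly (\ref{it:extrb}). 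The equivalence (\ref{it:extd})$\Leftrightarrow$(\ref{it:extfun}) is the formal dual, with $F_\omega$, $\sha(R)$ and the extension $\ldt$ in place of $G_\omega$, $R^\NN$ and $\lpt$.

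The substance is linking the functor restrictions to the (co)monad liftings; I would treat the comonad case (\ref{it:coefun})$\Leftrightarrow$(\ref{it:liftcomo}) in detail, the monad case (\ref{it:extfun})$\Leftrightarrow$(\ref{it:liftmo}) being its formal dual. Assuming (\ref{it:coefun}), set $\widetilde C(R,P):=(R^\NN,\lpt)$, $\widetilde C(\varphi):=\varphi^\NN$, and take $\widetilde\varepsilon$, $\widetilde\delta$ to have underlying maps $\varepsilon$, $\delta$. Then $\varepsilon_R:(R^\NN,\lpt)\to(R,P)$ is a Rota-Baxter homomorphism (immediate from $\lpt_0(f)=P(f_0)$), and $\widetilde\delta$ is well-defined because $\delta_R$ intertwines $\lpt$ with its $\omega$-cover on $(R^\NN)^\NN$ --- an induction on the component index using the recursion~\eqref{eq:lptn}, the fact that $\delta_R$ is a differential homomorphism, and $\omega(\partial_R,\lpt)=0$ --- so $\widetilde\C=\langle\widetilde C,\widetilde\varepsilon,\widetilde\delta\rangle$ is a comonad on $\RBA$ lifting $\C$. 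Now a $\widetilde\C$-coalgebra on $(R,P)$ is a $\C$-coalgebra structure $\theta$ on the underlying algebra --- equivalently a weight-$\lambda$ differential operator $d$ with $\theta(u)_n=d^n(u)$ --- that is moreover a Rota-Baxter homomorphism $(R,P)\to(R^\NN,\lpt)$; using $\psi(\partial_R)\theta(u)=\theta(\psi(d)u)$, $\phi(\partial_R)_n\theta(u)=d^n(\phi(d)u)$ and~\eqref{eq:lptn}, an induction on $n$ shows that the compatibility $\lpt_n(\theta(u))=d^n(P(u))$ at all $n$ collapses to its $n=1$ case, namely $dP=\phi(d)+P\psi(d)$, that is, to membership in $\DRB_\omega$. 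Hence $\widetilde H$ is an isomorphism $\DRB_\omega\to\RBA_{\widetilde\C}$ over $\RBA$, giving (\ref{it:liftcomo}). For the reverse implication, given a lifting and isomorphism as in (\ref{it:liftcomo}), apply $\widetilde H^{-1}$ to the cofree coalgebra $\widetilde C(R,P)=(R^\NN,\widetilde P)$: its structure map is $\widetilde\delta_{(R,P)}$, whose $m$-th component is $\partial_R^m$, so the differential operator of $\widetilde H^{-1}(\widetilde C(R,P))$ is $\partial_R$ and thus $(R^\NN,\partial_R,\widetilde P)\in\DRB_\omega$, that is, $\omega(\partial_R,\widetilde P)=0$; combined with the cover property of $\widetilde P$ (forced by $U\widetilde\varepsilon=\varepsilon U$) and the uniqueness in Proposition~\ref{prop:extrb}, this gives $\widetilde P=\lpt$, which is a Rota-Baxter operator since $\widetilde C$ lands in $\RBA$, so (\ref{it:extrb}) holds. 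On the monad side one argues dually: in particular the reverse implication (\ref{it:liftmo})$\Rightarrow$(\ref{it:extfun}) comes from evaluating $\widetilde K^{-1}$ on the free $\widetilde\T$-algebra $\widetilde T(R,d)$ and matching its structure map with $\mu_R$ and $\widetilde\eta_{(R,d)}$ with $j_R$, which pins the accompanying Rota-Baxter operator as $P_R$ and $\widetilde d$ as an extension of $d$ with $\omega(\widetilde d,P_R)=0$, whence $\widetilde d=\ldt$ by the uniqueness in Proposition~\ref{prop:extd}.

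It remains to splice in the mixed distributive law, where I would invoke the standard theory of liftings through (co)monadic adjunctions. Since $U:\RBA\to\ALG$ is monadic with monad $\T$ and $V:\DIF\to\ALG$ is comonadic with comonad $\C$ (so $\RBA\simeq\ALG^{\T}$ and $\DIF\simeq\ALG_{\C}$; see~\mcite{ZGK}), a comonad on $\RBA$ lifting $\C$ corresponds bijectively to a mixed distributive law $\beta:TC\to CT$, which simultaneously produces the lifted monad $\widetilde{\T}_\beta$ of $\T$ on $\DIF$, and the three associated Eilenberg--Moore objects coincide: $\RBA_{\widetilde\C}\cong(\ALG_{\C})^{\widetilde{\T}_\beta}\cong\DIF^{\widetilde{\T}_\beta}$, all over $\ALG$. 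Transporting the identification $\RBA_{\widetilde\C}\cong\DRB_\omega$ obtained above through these isomorphisms gives (\ref{it:liftcomo})$\Leftrightarrow$(\ref{it:mdl})$\Leftrightarrow$(\ref{it:liftmo}) and completes the chain.

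I expect the main obstacle to lie not in this last, essentially formal, step but in the middle one: verifying that $\widetilde\delta$ and $\widetilde\mu$ genuinely lift $\delta$ and $\mu$ --- where the recursion~\eqref{eq:lptn}, the formula~\eqref{eq:1leibdform} and the uniqueness clauses of Propositions~\ref{prop:extrb} and~\ref{prop:extd} all do real work --- and, above all, carrying out the level-by-level inductions that identify the coalgebra category $\RBA_{\widetilde\C}$ and the algebra category $\DIF^{\widetilde\T}$ with $\DRB_\omega$: it is exactly there that the seemingly asymmetric cover and extension formulas must be shown to encode the single symmetric relation $\omega(d,P)=0$, and the componentwise characterization of the Rota-Baxter identity for a cover in Proposition~\ref{prop:qq} (see also Corollary~\ref{co:qq0}) is the natural instrument. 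A more hands-on route to the crucial bridge (\ref{it:extrb})$\Leftrightarrow$(\ref{it:extd}) --- testing both conditions directly on the universal Rota-Baxter and differential algebras of Examples~\ref{ex:couex} and~\ref{ex:coud} --- is also available, but the (co)monadic framework above is the cleaner organizing device and the one I would follow.
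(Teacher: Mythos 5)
The paper contains no proof of this theorem: it is imported verbatim from \cite[Theorem~3.15]{ZGK2} and used as a black box (Section~\ref{sec:proof} only needs that all seven conditions reduce to condition (i)). So there is no in-paper argument to compare against, and the only question is whether your blind reconstruction would go through; in outline it would, and it follows the route one would expect. The unwindings (i)$\Leftrightarrow$(iii) and (ii)$\Leftrightarrow$(iv) are correct because $\DRB_\omega$ sits inside $\ODA_\omega$ (resp.\ $\ORB_\omega$) as the full subcategory of objects whose second operator is Rota--Baxter (resp.\ whose first is differential), and $G_\omega$, $F_\omega$ are defined on objects exactly by Propositions~\ref{prop:extrb} and~\ref{prop:extd}. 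Your two central computations are the right ones: the induction showing $\delta_R$ intertwines $\lpt$ with its $\omega$-cover on $(R^\NN)^\NN$ (using $\partial_{R^\NN}\delta_R=\delta_R\partial_R$ together with~\eqref{eq:lptn}), and the induction collapsing the compatibility $\lpt_n(\theta(u))=d^n(P(u))$ to its $n=1$ instance $dP=\phi(d)+P\psi(d)$, which is precisely what identifies $\RBA_{\tilde{\C}}$ with $\DRB_\omega$; and the converse directions correctly lean on the uniqueness clauses of Propositions~\ref{prop:extrb} and~\ref{prop:extd} applied to the cofree coalgebra and the free algebra. The one soft spot is the splice through (vii): as literally stated, (vii) only posits \emph{some} isomorphism $(\ALG_{\C})^{\tilde{\T}_\beta}\cong\DRB_\omega$, whereas your transport back to (v) and (vi) --- and from there to (i) and (ii) --- needs that isomorphism to be the canonical comparison over $\DIF$; an arbitrary $\beta$ whose bialgebra category is merely abstractly isomorphic to $\DRB_\omega$ would not by itself force $\omega(\partial_R,\widetilde{P})=0$. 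You should either read (vii) with that normalization (as the source evidently intends) or make it explicit before transporting; with that caveat the chain closes.
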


It is important to classify in concrete terms the elements $\omega\in\calt$ that satisfy the equivalent conditions in the theorem. We obtain two results in this direction, first in the case when the weight is zero and then in the case when the weight is arbitrary. As we will see, the conditions imposed on $\omega$ are very strict.

Consider the following subsets of $\calt$:
$$\calt_0:=\{xy-a_0\,|\, a_0 \in \bfk\}\cup \{xy-\left(b_0y+yx\right)\,|\, b_0\in \bfk\}, \quad \calt_{\bfk}:=\{xy, xy-1, xy-yx\}.
$$
Then we give the main theorem in this paper:
\begin{theorem}
Let $\bfk$ be a domain of characteristic zero, and $\omega\in\calt$ be given.
\begin{enumerate}
\item
The equivalent conditions in Theorem~\ref{thm:main} hold in the case of $\lambda = 0$ if and only if $\omega$ is in $\Omega_0$.
\item
The equivalent conditions in Theorem~\ref{thm:main} hold for all weights $\lambda\in\bfk$ if and only if $\omega$ is in $\Omega_\bfk$.
\end{enumerate}
\mlabel{thm:lr0}
\end{theorem}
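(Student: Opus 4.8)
By Theorem~\ref{thm:main}, it suffices to analyze any one of the seven equivalent conditions; I would work with condition~(\mref{it:extrb}), the assertion that for every Rota-Baxter operator $P$ on every algebra $R$ the canonical cover $\lpt$ of $P$ to $R^\NN$ is again a Rota-Baxter operator. The plan is to turn this into a system of identities among the coefficients of $\phi$ and $\psi$, using the recursion~(\mref{eq:lptn}) for $\lpt_n$ together with the Rota-Baxter characterization in Proposition~\mref{prop:qq}.(\mref{it:pqn}). Write $\omega=xy-(\phi(x)+y\psi(x))$ with $\phi(x)=\sum_i a_i x^i$, $\psi(x)=\sum_i b_i x^i$. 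For the ``if'' directions one checks directly that each listed $\omega$ produces an honest cover: e.g.\ $\omega=xy-1$ gives Proposition~\mref{prop:extrb}'s classical FFTC cover, $\omega=xy$ gives $\lpt=0$, $\omega=xy-yx$ gives $\lpt$ commuting with $\partial_R$, and the two families in $\calt_0$ are handled by a short computation showing~(\mref{eq:qn}) holds for all $f,g$. The substance is the ``only if'' directions.

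For the converse when $\lambda=0$, the strategy is to feed carefully chosen test inputs into the necessary condition. The first reduction is to expand~(\mref{eq:lptn}) with $\lambda=0$: since $\partial_R$ is the shift, $\phi(\partial_R)_n(f)=\sum_i a_i f_{n+i}$, so $\lpt_n$ is an explicit finite linear combination of shifts of $f$ determined by $\phi$ and $\psi$; in particular $\lpt_0(f)=\sum_i a_i f_i + \sum_{i,j} (\text{products of } b\text{'s}) f_{\cdots}$, while condition~(\mref{it:pq0}) forces $\lpt_0$ to agree with the original Rota-Baxter structure on $R^\NN$ under $\varepsilon_R$. Using the concrete Rota-Baxter algebras of Example~\mref{ex:couex} — the quotients $(\sha(\bfk)/I_m,\overline{P_\bfk})$, where $\overline{P_\bfk}$ is nilpotent in a controlled way — one can isolate individual coefficients: plugging in sequences $f,g$ supported on a few indices makes~(\mref{eq:qn}) collapse to a polynomial relation in $a_0,a_1,\dots,b_0,b_1,\dots$ that must vanish. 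Running this for $n=0$ first pins down $\lpt_0$ and hence shows $\phi,\psi$ have low degree; Corollary~\mref{co:qq0}, with its explicit $n=1$ obstruction, is exactly the tool to rule out the surviving possibilities and force $\omega$ into $\calt_0$. (I would restate Theorem~\ref{thm:lr0} as a ``Theorem~\ref{thm:lr}'' separating the $\lambda=0$ and general-$\lambda$ cases, as the excerpt promises, and prove part (i) in these steps.)

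For arbitrary weight $\lambda$, the condition must hold for \emph{all} $\lambda\in\bfk$ simultaneously, so I would regard the resulting identities as polynomial in $\lambda$ and equate coefficients of each power of $\lambda$. The $\lambda=0$ specialization already forces $\omega\in\calt_0$, i.e.\ either $\omega=xy-a_0$ or $\omega=xy-(b_0 y+yx)$. It then remains to test these two one-parameter families against~(\mref{eq:qn}) with the full $\lambda$-Hurwitz product~(\mref{eq:hurprod}) and its recursion~(\mref{eq:recHur}): substituting the test sequences from Example~\mref{ex:couex} (now with the weight-$\lambda$ multiplication~(\mref{eq:tmng})) produces identities in $a_0$ (resp.\ $b_0$) and $\lambda$; demanding these hold for all $\lambda$ kills all but $a_0\in\{0,1\}$ and $b_0=0$, which is precisely $\calt_\bfk=\{xy,\,xy-1,\,xy-yx\}$. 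This uses that $\bfk$ has characteristic zero and is a domain, so the binomial coefficients appearing in~(\mref{eq:tmng}) and~(\mref{eq:hurprod}) are nonzero and cancellation is legitimate.

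\textbf{Main obstacle.} The genuine difficulty is the ``only if'' direction for general weight: bounding $\deg\phi$ and $\deg\psi$ is easy once one has enough test data, but showing that \emph{no} higher-degree $\phi,\psi$ can conspire — across all choices of $R$, $P$, and $\lambda$ — to make~(\mref{eq:qn}) hold requires choosing the test algebras and sequences $f,g$ shrewdly so that the combinatorial coefficients do not accidentally vanish; the quotient Rota-Baxter algebras $\sha(\bfk)/I_m$ and the weight-$\lambda$ multiplication~(\mref{eq:tmng}) are designed for exactly this, but organizing the induction on $n$ and on $\deg\psi$ cleanly, and handling the interaction between the $y\psi(x)$ term (which makes $\lpt_n$ depend recursively on $\lpt_{n-1}$) and the Rota-Baxter quadratic terms, is where the real work lies.
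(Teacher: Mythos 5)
Your proposal follows essentially the same route as the paper: reduce to condition~(\mref{it:extrb}), expand the cover via the recursion~(\mref{eq:lptn}), test against the quotient Rota--Baxter algebras $(\sha(\bfk)/I_m,\overline{P_\bfk})$ with delta-supported sequences, use the $n=1$ obstruction of Corollary~\mref{co:qq0} to force $\omega\in\calt_0$, and then for general $\lambda$ first specialize to $\lambda=0$ and test the two surviving families to get $a_0\in\{0,1\}$ and $b_0=0$. One small correction: the $n=0$ instance of~(\mref{eq:qn}) is automatic (it is just Proposition~\mref{prop:qq}.(\mref{it:pq0}) for the given Rota--Baxter operator $P$, since $\lqb_0(f)=P(f_0)$ by the definition of a cover), so it yields no constraints; all the degree bounds in the paper come from the $n=1$ identity, exactly as your invocation of Corollary~\mref{co:qq0} suggests.
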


\section{Proof of the main theorem}
\mlabel{sec:proof}

Recall that all the seven statements in Theorem~\ref{thm:main} are equivalent for a given $\omega\in\calt$, and the first statement amounts to saying that the cover of a Rota-Baxter operator is again a Rota-Baxter operator. So we just need to prove the following theorem and then Theorem~\ref{thm:lr0} follows.

\begin{theorem}
Let $\omega=xy-(\phi(x)+y\psi(x))\in\calt$ be given.
\begin{enumerate}
\item
The following statements are equivalent.
\begin{enumerate}
\item
For every Rota-Baxter algebra $(R,P)$ of weight $0$, the cover $\lpt$ of $P$ on the differential algebra $(R^\NN,\partial_R)$ of weight $0$ given in Proposition~\mref{prop:extrb} is again a Rota-Baxter operator of weight $0$;
\item
$\omega$ is in $\calt_0.$
\end{enumerate}
\mlabel{it:weight0}
\item
The following statements are equivalent.
\begin{enumerate}
\item
For every Rota-Baxter algebra $(R,P)$ of arbitrary weight $\lambda$, the cover $\lpt$ of $P$ on the differential algebra $(R^\NN,\partial_R)$ of weight $\lambda$  given in Proposition~\mref{prop:extrb} is again a Rota-Baxter operator of weight $\lambda$;
\mlabel{it:b1}
\item
$\omega$ is in $\calt_{\bfk}.$
\mlabel{it:b2}
\end{enumerate}
\mlabel{it:weightlambda}
\end{enumerate}
\mlabel{thm:lr}
\end{theorem}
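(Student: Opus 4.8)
The plan is to prove Theorem~\ref{thm:lr} by working entirely with the recursive description of the cover $\lpt$ in terms of the maps $\lpt_n$, and checking when the Rota-Baxter identity for $\lpt$ holds, testing it against the explicit Rota-Baxter algebras from Example~\ref{ex:couex}. First I would record, from ~\eqref{eq:lptn} and the facts $\partial_R$ is a differential operator with $(\partial_R)_n(f)=f_n$, that $\lpt_0(f)=P(f_0)$, $\lpt_1(f)=\phi_1 f_0 + \phi_0 f_1 + \cdots$ evaluated appropriately — more precisely I would compute $\lpt_0, \lpt_1, \lpt_2$ in closed form in terms of the coefficients of $\phi(x)=\sum \phi_i x^i$ and $\psi(x)=\sum \psi_i x^i$, since these are all that appear in the low-degree instances of the Rota-Baxter identity ~\eqref{eq:qn}. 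The easy direction (the ``if'' direction) in both parts can then be handled by direct verification: for $\omega=xy-1$ (weight $\lambda$), $\omega=xy-yx$ (weight $\lambda$), and $\omega=xy$, $\omega=xy-a_0$, $\omega=xy-(b_0 y+yx)$ (weight $0$), show that the cover $\lpt$ constructed in Proposition~\ref{prop:extrb} satisfies ~\eqref{eq:qn}; here the case $xy-1$ is exactly the FFTC situation already handled in~\cite{ZGK}, and the others reduce to short computations with the Hurwitz product ~\eqref{eq:hurprod}.

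For the harder ``only if'' direction I would argue by contradiction: assuming the cover is always a Rota-Baxter operator, derive constraints on $\phi,\psi$ forcing $\omega$ into $\calt_0$ (resp.\ $\calt_\bfk$). The key tool is Corollary~\ref{co:qq0}: it suffices to exhibit, for any $\omega$ outside the target set, a single Rota-Baxter algebra $(R,P)$ of weight $0$ and elements $f,g\in R^\NN$ violating ~\eqref{eq:qn01}, i.e.\ the degree-$1$ instance of the Rota-Baxter identity. For weight $0$, the natural test algebras are $\sha(\bfk)$ and its quotients $\sha(\bfk)/I_m$ from Example~\ref{ex:couex}, on which $P_\bfk$ (resp.\ $\overline{P_\bfk}$) is a genuinely nonzero, non-nilpotent (resp.\ nilpotent) Rota-Baxter operator; choosing $f,g$ to be constant sequences or the sequences $\theta_{(R,d,P)}(z_i)_n = d^n(z_i)$ built from Example~\ref{ex:coud}, one plugs into ~\eqref{eq:qn01} and reads off polynomial equations in $\phi_i,\psi_i,\lambda$ that must vanish identically in the test parameters. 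Solving these equations is the crux: I expect them to force $\psi(x)\in\{0,x\}$ and then, in the $\psi=x$ case, $\phi(x)$ to be linear of the special forms listed, while $\psi=0$ forces $\phi$ constant. For the arbitrary-weight statement, one additionally varies $\lambda$ and uses the $\lambda$-Hurwitz product ~\eqref{eq:hurprod}; the extra weight terms kill the possibility $\phi=a_0\neq 0$ with $\psi=0$ and the family $xy-(b_0 y+yx)$ with $b_0\neq0$, leaving exactly $\{xy,\,xy-1,\,xy-yx\}=\calt_\bfk$.

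The main obstacle will be the bookkeeping in the ``only if'' direction: the recursion ~\eqref{eq:lptn} expresses $\lpt_n$ as a nested sum involving all coefficients of $\phi$ and $\psi$ and binomial factors from the differential operator $\partial_R$, so writing $\lpt_1$ and $\lpt_2$ cleanly, and then extracting from ~\eqref{eq:qn01} a manageable system of equations in the $\phi_i,\psi_i$ (and $\lambda$), requires care — in particular one must be sure the chosen $f,g$ and test algebra actually detect each offending coefficient, e.g.\ $\deg\phi\ge 2$, $\deg\psi\ge 2$, or a ``wrong'' linear coefficient. I would organize this by first disposing of high-degree terms (showing $\deg\psi\le 1$ and $\deg\phi\le 1$ by evaluating on high enough quotients $\sha(\bfk)/I_m$, where characteristic zero and the domain hypothesis on $\bfk$ prevent spurious cancellation), then treating the finitely many remaining linear shapes of $\omega$ case by case. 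Once $\omega$ is pinned down to a short explicit list, the equivalences with the other six conditions in Theorem~\ref{thm:main} are already available, so Theorem~\ref{thm:lr0} follows immediately.
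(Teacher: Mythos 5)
Your overall architecture is the same as the paper's: verify the ``if'' direction directly (with an induction on $n$ for the family $xy-(b_0y+yx)$), and for the ``only if'' direction use Corollary~\ref{co:qq0} together with the nilpotent Rota--Baxter algebras $(\sha(\bfk)/I_m,\overline{P_\bfk})$ of Example~\ref{ex:couex} to manufacture violations of the degree-one Rota--Baxter identity. However, as written the plan has a genuine gap exactly at what you yourself identify as the crux. First, your anticipated outcome of the computation is wrong: you expect the constraints to force $\psi\in\{0,x\}$ and, when $\psi=x$, to allow $\phi$ ``linear of the special forms listed.'' But $\calt_0=\{xy-a_0\}\cup\{xy-(b_0y+yx)\}$ means that whenever $\psi\neq 0$ one must have $\phi=0$ identically (and $\psi=b_0+x$ with $b_0$ arbitrary, not $\psi=x$). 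Ruling out the mixed case $\phi\neq 0$, $\psi\neq 0$ is the hardest part of the argument --- in the paper it occupies all of Section~\ref{ss:case3}, requiring an eight-way case split on $(\deg\phi,\deg\psi)$ in Lemma~\ref{lem:cases} plus further subcases to handle accidental cancellations such as $\sum_{k=0}^{r-1}b_1^k=0$ --- and your plan does not engage with it beyond saying the equations should be solved.

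Second, the test elements you propose (constant sequences, or $\theta_{(R,d,P)}(z_i)$ with entries $d^n(z_i)$) are not the ones that make the computation tractable, and there is no argument that they detect the offending coefficients. The paper's counterexamples use sequences concentrated at a single index, $f_\ell=\delta_{\ell,M_n}u$, where $M_n$ is the \emph{maximal} subscript occurring in the closed-form expansion of $\lpt_n(f)$; this choice is what kills all but the leading terms $a_rb_s^{k}$ and turns \eqref{eq:qn01} into a single nonvanishing monomial like $a_r^2b_s^{r-1}\overline{z_0}$. With constant sequences every coefficient of $\phi$ and $\psi$ contributes to every equation, and over a general domain $\bfk$ one cannot exclude cancellation without substantial extra work. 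So the step ``reads off polynomial equations in $\phi_i,\psi_i,\lambda$ that must vanish'' is precisely where the proof lives, and it is not carried out; until it is, the classification (in particular that $\phi$ and $\psi$ cannot both be nonzero) is not established.
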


\begin{proof} (Summary)
The proof is divided into four parts. Let $\omega=xy-(\phi(x)+y\psi(x))$ with $\phi, \psi\in \bfk[x]$.
The first three parts cover the proof of Item~(\mref{it:weight0}), partitioned into the following three cases of $\omega$.

\begin{enumerate}
\item[]
{\bf Case 1. $\psi=0$.} This is proved in Section~\mref{ss:case1};
\item[]
{\bf Case 2. $\psi\neq 0$ and $\phi=0$.} This is proved in Section~\mref{ss:case2};
\item[]
{\bf Case 3. $\psi\neq 0$ and $\phi\neq 0$.} This is proved in Section~\mref{ss:case3}.
\end{enumerate}

The fourth part, given in Section~\mref{ss:part2}, proves Item~(\mref{it:weightlambda}) of the theorem.
\end{proof}

Based on this result and computations in some other cases, we propose the following

\begin{conjecture}
Let $\omega\in \bfk\langle x,y\rangle$ of the form
$\omega=xy-\sum\limits_{i=0}^\infty y^i\phi_i (x)$ be given.
\begin{enumerate}
\item
The following statements are equivalent.
\begin{enumerate}
\item
For every Rota-Baxter algebra $(R,P)$ of weight $0$, there is a cover $\lpt$ of $P$ on the differential algebra $(R^\NN,\partial_R)$ of weight $0$ that satisfies $\omega(\partial_R,\lpt)=0$ and is a Rota-Baxter operator of weight $0$;
\item
$\omega$ is in $\calt_0.$
\end{enumerate}
\mlabel{it:weight0}
\item
The following statements are equivalent.
\begin{enumerate}
\item
For every Rota-Baxter algebra $(R,P)$ of arbitrary weight $\lambda$, there is a cover $\lpt$ of $P$ on the differential algebra $(R^\NN,\partial_R)$ of weight $\lambda$ that satisfies $\omega(\partial_R,\lpt)=0$ and is a Rota-Baxter operator of weight $\lambda$;
\mlabel{it:b1}
\item
$\omega$ is in $\calt_{\bfk}.$
\mlabel{it:b2}
\end{enumerate}
\mlabel{it:weightlambda}
\end{enumerate}
\mlabel{conj:omega}
\end{conjecture}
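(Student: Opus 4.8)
The plan is to prove Theorem~\ref{thm:lr}; Theorem~\ref{thm:lr0} then follows at once from the equivalences in Theorem~\ref{thm:main}. Write $\omega=xy-(\phi(x)+y\psi(x))$ with $\phi(x)=\sum_ic_ix^i$ and $\psi(x)=\sum_id_ix^i$. The tools are: the recursion~\eqref{eq:lptn} for the cover, namely $\lpt_0(f)=P(f_0)$ and $\lpt_{n+1}=\phi(\partial_R)_n+\lpt_n\psi(\partial_R)$; Proposition~\ref{prop:qq}(\ref{it:pqn}), which says that $(R^\NN,\lpt)$ is Rota-Baxter of weight $\lambda$ iff all the identities~\eqref{eq:qn} hold (the $n=0$ case being automatic by part~(\ref{it:pq0})); and, when $\lambda=0$, Corollary~\ref{co:qq0}, so that a single nonvanishing instance of the left side of~\eqref{eq:qn01} already shows $\lpt$ is not Rota-Baxter.

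\emph{Sufficiency} [(b)$\Rightarrow$(a)]. From~\eqref{eq:lptn} one gets the cover in closed form: for $\omega=xy-a_0$ it is $\lpt_0(f)=P(f_0),\ \lpt_n(f)=a_0f_{n-1}$ $(n\ge1)$, and for $\omega=xy-(b_0y+yx)$ (so $\phi=0$, $\psi=x+b_0$) it is $\lpt_n(f)=\sum_{k=0}^n\binom nk b_0^{\,n-k}P(f_k)$. Substituting into~\eqref{eq:qn} and applying the Rota-Baxter relation of $(R,P)$ term by term collapses the identity, when $\lambda=0$, to a triviality in both families; for general $\lambda$ the surviving discrepancy already at $n=1$ vanishes identically in $\lambda$ precisely when $a_0\in\{0,1\}$, resp.\ when $b_0=0$. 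Since $\bfk$ is a domain, this proves the ``if'' direction of item~(i) with $\calt_0$ and of item~(ii) with $\calt_{\bfk}$.

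\emph{Necessity} [(a)$\Rightarrow$(b)] at weight $0$, distinguishing the three cases $\psi=0$ (Case~1), $\psi\ne0$ with $\phi=0$ (Case~2), and $\psi\ne0$ with $\phi\ne0$ (Case~3). In Case~1 with $N:=\deg\phi\ge1$, test against the weight-$0$ Rota-Baxter algebra $(\bfk,0)$: then $\lpt_0=0$ and $\lpt_n(f)=\sum_kc_kf_{n-1+k}$ for $n\ge1$, and a short computation shows that the left side of~\eqref{eq:qn01} at $f=e_{2N-1},\,g=e_0$ — with $e_i$ the sequence having $1$ in slot $i$ and $0$ elsewhere — equals $c_N^2\ne0$, so $\lpt$ is not Rota-Baxter by Corollary~\ref{co:qq0}. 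In Case~2 with $\psi\ne x+b_0$, the zero operator no longer suffices, so instead test against the truncated Rota-Baxter algebras $(\sha(\bfk)/I_m,\overline{P_\bfk})$ of Example~\ref{ex:couex} — taking $m$ just large enough that the relevant products $z_iz_j$ and iterates of $\overline{P_\bfk}$ are nonzero — evaluating~\eqref{eq:qn} on the basis sequences $e_0,e_1,\dots$ (at $n=1$, or at a small $n>1$ where the $n=1$ identity degenerates). The identities so obtained pin down successive low-order coefficients of $\psi$ and together force $\psi(x)=x+b_0$, so at least one of them must fail; the failure is a nonzero multiple of some $z_i$ — for instance $\bigl(\psi(\psi(0))-2\psi(0)\bigr)z_2$, or $d_1(d_1-1)z_2$, or $(2r\,d_r)z_2$ with $x^r$ the lowest monomial of $\psi$ — nonzero since $\bfk$ has characteristic zero. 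In Case~3, $\omega\notin\calt_0$ automatically; one again tests $(\bfk,0)$ and, when needed, the truncations $\sha(\bfk)/I_m$, but now an identity~\eqref{eq:qn} with $n$ as large as $3$ may be required (for example $\phi=\psi=1$ first fails at $n=3$ over $(\bfk,0)$), and the bookkeeping is heavier.

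For item~(ii) (arbitrary weight), the ``if'' part is the $\calt_{\bfk}$ case of the sufficiency above. For ``only if'', if $\omega\notin\calt_{\bfk}$ then either $\omega\notin\calt_0$, which Cases~1--3 dispose of at $\lambda=0$, or $\omega\in\calt_0\setminus\calt_{\bfk}$, that is $\omega=xy-a_0$ with $a_0\notin\{0,1\}$ or $\omega=xy-(b_0y+yx)$ with $b_0\ne0$; for these one takes $\lambda=1$ and a small weight-$1$ Rota-Baxter algebra ($(\bfk,0)$, resp.\ $(\sha(\bfk)/I_3,\overline{P_\bfk})$ with the product~\eqref{eq:tmng}) and checks that~\eqref{eq:qn} fails already at $n=1$, with discrepancy $a_0(a_0-1)f_0g_0$, resp.\ a nonzero element whose $z_2$-component is $2b_0^2$. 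I expect the main obstacle to be the necessity in Cases~2 and~3: one must exhibit, uniformly over all $(\phi,\psi)$ with $\omega\notin\calt_0$, an explicit Rota-Baxter algebra and pair of Hurwitz sequences on which some Rota-Baxter identity of the cover genuinely fails, and the sub-case analysis — on the lowest coefficients of $\psi$, on whether $\phi$ is constant, and on how large $n$ and the truncation $m$ must be chosen — together with checking that the witnessing scalar does not vanish in characteristic zero is where essentially all the work lies.
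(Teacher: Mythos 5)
There is a genuine gap, and it is structural: the statement you were asked to prove is Conjecture~\ref{conj:omega}, which concerns polynomials of the general form $\omega=xy-\sum_{i=0}^\infty y^i\phi_i(x)$, allowing arbitrary powers of $y$. Your very first step, ``Write $\omega=xy-(\phi(x)+y\psi(x))$,'' silently restricts to the subclass $\calt=xy+\bfk[x]+y\bfk[x]$, i.e.\ to $\phi_i=0$ for all $i\ge 2$. That restricted statement is Theorem~\ref{thm:lr}, which the paper does prove (by exactly the route you sketch: the case split $\psi=0$ / $\psi\ne 0,\phi=0$ / $\psi\ne 0,\phi\ne 0$, the truncated Rota--Baxter algebras $(\sha(\bfk)/I_m,\overline{P_\bfk})$ as test objects, and Corollary~\ref{co:qq0} at $n=1$). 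But the entire new content of the conjecture lies in the terms $y^i\phi_i(x)$ with $i\ge 2$, and your proposal says nothing about them. Indeed the paper itself only \emph{conjectures} the general statement ``based on this result and computations in some other cases''; there is no proof of it in the paper to reproduce.

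Two concrete issues would have to be faced for the general case and are absent from your plan. First, for $\omega\notin\calt$ the existence and uniqueness of a cover satisfying $\omega(\partial_R,\lpt)=0$ is no longer supplied by Proposition~\ref{prop:extrb}: the recursion~\eqref{eq:lptn} is derived from the special shape $xy-(\phi(x)+y\psi(x))$, in which $y$ appears linearly outside the leading term, so that $\partial_R\lpt$ determines $\lpt_{n+1}$ from $\lpt_n$. With a term like $y^2\phi_2(x)$ the defining relation becomes $\partial_R\lpt=\phi_0(\partial_R)+\lpt\phi_1(\partial_R)+\lpt^2\phi_2(\partial_R)+\cdots$, which is no longer a first-order recursion in the components $\lpt_n$; this is precisely why the conjecture is phrased as ``there is a cover'' rather than ``the unique cover,'' and one must quantify over all such covers when proving the necessity direction. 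Second, the necessity argument must then show that for \emph{every} $\omega$ with some $\phi_i\ne 0$, $i\ge 2$, and for \emph{every} cover satisfying the constraint, some Rota--Baxter identity fails on some test algebra; none of your eight-case bookkeeping addresses this, since all of it assumes the closed or recursive formulas~\eqref{eq:lpn}, \eqref{eq:rPn(f)}, \eqref{eq:3Pn} that are specific to $\calt$. As it stands your proposal is a correct outline of the paper's proof of Theorem~\ref{thm:lr}, but it does not engage with, let alone close, the conjecture.
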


Theorem~\mref{thm:lr} provides two classes of type $\omega$ differential Rota-Baxter algebras by the following corollary.
\begin{coro}
Fix an $\omega\in \calt_0$  $($resp.,  $\omega\in \calt_\bfk$$)$ and assume that $(R,P)$ is a Rota-Baxter algebra of weight $0$ (resp.,  weight $\lambda$) and $(R,d)$ is a differential algebra of weight $0$ (resp.,  weight $\lambda$), where $\lambda\in \bfk$ is arbitrary. Then we obtain two type $\omega$ differential Rota-Baxter algebra $(R^\NN,\partial_R, \lpt)$ and $(\sha(R), \ldt, P_R)$ of weight $0$ (resp.,  weight $\lambda$).
\end{coro}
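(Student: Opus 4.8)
The plan is to verify, for each of the two triples, the three defining conditions of a type $\omega$ differential Rota-Baxter algebra listed in Definition~\ref{de:DRBt}, assembling facts already proved. The one substantive ingredient is the hypothesis on $\omega$: since $\omega$ lies in $\calt_0$ in the weight-zero statement, and in $\calt_\bfk$ in the arbitrary-weight statement, Theorem~\ref{thm:lr} (equivalently Theorem~\ref{thm:lr0}) tells us that the seven equivalent conditions of Theorem~\ref{thm:main} all hold for this $\omega$. In particular I may invoke both condition~(\ref{it:extrb}), that the cover of a Rota-Baxter operator is again a Rota-Baxter operator, and condition~(\ref{it:extd}), that the extension of a differential operator is again a differential operator.

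First I would treat the triple $(R^\NN, \partial_R, \lpt)$. By Proposition~\ref{prop:cofree}, $(R^\NN, \partial_R)$ is a differential algebra of the relevant weight, which is condition~(i) of Definition~\ref{de:DRBt}. The operator $\lpt$ is the unique cover of $P$ produced by Proposition~\ref{prop:extrb}, and that proposition records the relation $\partial_R\lpt=\phi(\partial_R)+\lpt\psi(\partial_R)$, i.e. $\omega(\partial_R,\lpt)=0$, which is condition~(iii). The remaining condition~(ii), that $(R^\NN,\lpt)$ is a Rota-Baxter algebra of the relevant weight, is exactly the content of Theorem~\ref{thm:main}.(\ref{it:extrb}), available because $\omega$ lies in the prescribed class.

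The triple $(\sha(R), \ldt, P_R)$ is handled dually. By Proposition~\ref{pp:adjrba}, $(\sha(R), P_R)$ is the free Rota-Baxter algebra of the relevant weight, so $P_R$ is a Rota-Baxter operator and condition~(ii) holds. The operator $\ldt$ is the unique extension of $d$ from Proposition~\ref{prop:extd}, which also yields $\ldt P_R=\phi(\ldt)+P_R\psi(\ldt)$, i.e. $\omega(\ldt,P_R)=0$, giving condition~(iii). Finally, condition~(i), that $(\sha(R),\ldt)$ is a differential algebra of the relevant weight, is precisely Theorem~\ref{thm:main}.(\ref{it:extd}), again available because $\omega$ lies in the prescribed class.

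I expect no genuine obstacle here: the corollary is a direct repackaging of the classification, and all the difficulty was absorbed into the proof of Theorem~\ref{thm:lr}. The only point needing attention is weight bookkeeping. In the $\calt_0$ statement every algebra and operator carries weight $0$, while in the $\calt_\bfk$ statement the fixed but arbitrary $\lambda$ must propagate consistently through Propositions~\ref{prop:cofree}, \ref{pp:adjrba}, \ref{prop:extrb} and \ref{prop:extd} and through the cited conditions of Theorem~\ref{thm:main}; since Theorem~\ref{thm:lr} establishes those conditions for all $\lambda\in\bfk$ precisely when $\omega\in\calt_\bfk$, both constructions succeed at the stated weight.
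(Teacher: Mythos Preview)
Your proposal is correct and follows essentially the same route as the paper's proof: for $(R^\NN,\partial_R,\lpt)$ the paper cites Theorem~\ref{thm:lr} for the Rota-Baxter property and ~(\ref{eq:lpproperty}) for the type~$\omega$ relation, while for $(\sha(R),\ldt,P_R)$ it cites Proposition~\ref{prop:extd} together with Theorem~\ref{thm:main}, exactly as you do. Your write-up is simply more explicit about the differential and Rota-Baxter structures on $R^\NN$ and $\sha(R)$ already in place, which the paper leaves implicit.
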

\begin{proof}
Applying Theorem~\mref{thm:lr}, $\lpt$ is a Rota-Baxter operator of weight $0$ (resp.,  weight $\lambda$) on $R^\NN$. Also by ~(\mref{eq:lpproperty}), $\partial_R$ and $\lpt$ satisfy the required relation. Thus $(R^\NN,\partial_R, \lpt)$ is a type $\omega$ differential Rota-Baxter algebra of weight $0$ (resp.,  weight $\lambda$). Similarly, by Proposition~\ref{prop:extd} and Theorem~\ref{thm:main}, $(\sha(R), \ldt, P_R)$ is a type $\omega$ differential Rota-Baxter algebra.
\end{proof}
\smallskip

For a given $\omega=xy-(\phi(x)+y\psi(x))\in\calt$ with $\phi(x):=\sum\limits_{i=0}^{r}a_ix^i$, $\psi(x):=\sum\limits_{j=0}^{s}b_jx^j$, and each $f\in R^\NN, n\in \NN_+$, we obtain
\begin{eqnarray}
\lpt_{n}(f)&=&
(\phi(\partial_R)_{n-1}+\lpt_{n-1}\psi(\partial_R))(f)\quad\text{(by ~(\mref{eq:lptn}))}\notag\\
&=&\Big(\Big(\sum_{i=0}^ra_i \partial_R^i\Big)_{n-1}+ \sum_{j=0}^s b_j\lpt_{n-1}\partial_R^j\Big)(f)\notag\\
&=&\sum_{i=0}^ra_i f_{n-1+i} + \sum_{j=0}^s b_j\lpt_{n-1}(\partial_R^j f).
\mlabel{eq:lptn1}
\end{eqnarray}

Recall from Example~\mref{ex:couex} that, for $m\in \NN_+$,  $\overline{P_{\bfk}}$ is a Rota-Baxter operator on the quotient algebra $\sha(\bfk)/I_m$.
These Rota-Baxter algebras $(\sha(\bfk)/I_m, \overline{P_{\bfk}})$ will be used extensively with Corollary~\mref{co:qq0} to give counterexamples in the later proofs.

\subsection{Proof of Theorem~\mref{thm:lr}.(\mref{it:weight0}): Case 1}
\mlabel{ss:case1}

In this case $\omega:=xy-\phi(x)\in\calt$, where $\phi\in\bfk[x]$. Thus to prove Case 1 of Theorem~\mref{thm:lr}.(\mref{it:weight0}), we only need to prove the following proposition which provides an additional equivalent condition.
\begin{prop}
Let $\omega:=xy-\phi(x)$ with $\phi(x):=\sum\limits_{i=0}^{r}a_ix^i$.
The following statements are equivalent.
\begin{enumerate}
\item
For every Rota-Baxter algebra $(R,P)$ of weight $0$, the cover $\lpt$ of $P$ on the differential algebra $(R^\NN,\partial_R)$ of weight $0$ is again a Rota-Baxter operator of weight $0$;
\mlabel{it:case1a}
\item
$\phi=a_0$, that is, $\omega=xy-a_0$;
\mlabel{it:case1b}
\item
For every Rota-Baxter algebra $(R,P)$ of weight $0$, we have
\begin{equation}
\lpt(f)=(P(f_0), a_0 f_0, a_0 f_1, \cdots) \quad \text{for all }\ f\in R^\NN.
\mlabel{eq:lps}
\end{equation}
\mlabel{it:case1c}
\end{enumerate}
\label{pp:l}
\end{prop}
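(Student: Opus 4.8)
The plan is to prove the cycle of implications $(\ref{it:case1b})\Rightarrow(\ref{it:case1c})\Rightarrow(\ref{it:case1a})\Rightarrow(\ref{it:case1b})$, since $(\ref{it:case1b})$ is the cleanest hypothesis to compute with and $(\ref{it:case1c})$ gives an explicit formula that makes the Rota-Baxter check mechanical. For $(\ref{it:case1b})\Rightarrow(\ref{it:case1c})$: when $\psi=0$ and $\phi=a_0$, the recursion~\eqref{eq:lptn1} collapses to $\lpt_n(f)=a_0 f_{n-1}$ for all $n\geq 1$, while $\lpt_0(f)=P(f_0)$ by the cover condition; this is exactly~\eqref{eq:lps}, so the implication is just unwinding the recursion. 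For $(\ref{it:case1c})\Rightarrow(\ref{it:case1a})$: given the explicit form~\eqref{eq:lps}, I would verify condition~\eqref{eq:qn} of Proposition~\ref{prop:qq}.(\ref{it:pqn}) directly. Separating the $n=0$ component (which reduces to the weight-$0$ Rota-Baxter identity for $(R,P)$ itself, hence holds by hypothesis) from the $n\geq 1$ components (where every term carries the scalar $a_0^2$ or $a_0$ and the identity becomes the Hurwitz-product Rota-Baxter identity for the operator $g\mapsto (0,a_0g_0,a_0g_1,\dots)$, which one checks is $a_0 P_{\text{shift}}$, a scalar multiple of the shift operator, and the shift operator is a weight-$0$ Rota-Baxter operator on $R^\NN$). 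This is a routine but slightly fiddly binomial computation.

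The substance of the proposition is $(\ref{it:case1a})\Rightarrow(\ref{it:case1b})$, and this is where I expect the main obstacle to lie. The strategy is contrapositive: assume $\phi=\sum_{i=0}^r a_i x^i$ with some $a_i\neq 0$ for $i\geq 1$; I must produce a single Rota-Baxter algebra $(R,P)$ of weight $0$ and elements $f,g\in R^\NN$ for which the cover $\lpt$ fails the Rota-Baxter identity. The tool provided for this is Corollary~\ref{co:qq0}: it suffices to find $f,g$ with
\[
\bigl(\lpt_1(\lpt(f)g)+\lpt_1(f\lpt(g))\bigr)-\bigl(\lpt_0(f)\lpt_1(g)+\lpt_1(f)\lpt_0(g)\bigr)\neq 0.
\]
Using~\eqref{eq:lptn1} with $\psi=0$, one has $\lpt_1(h)=\sum_{i=0}^r a_i h_i$ and $\lpt_0(h)=P(h_0)$, and $(\lpt(h))_0=P(h_0)$, $(\lpt(h))_i = a_{i-1}h_{i-1}$ for $i\geq 1$ (reindexing $\phi$). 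Plugging these into the displayed expression and using the weight-$0$ Hurwitz product~\eqref{eq:hurprod0}, I expect the obstruction to collapse to an expression of the form $a_i\,(\text{polynomial in }P\text{ applied to }f,g\text{ components})$ for the smallest $i\geq 1$ with $a_i\neq 0$; the delicate point is choosing $R$ and $f,g$ so that this does not vanish for \emph{any} $\phi$ with a nonzero higher coefficient.

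For the concrete counterexample I would use the quotient Rota-Baxter algebras $(\sha(\bfk)/I_m,\overline{P_\bfk})$ from Example~\ref{ex:couex} (exactly as the text signposts right before Section~\ref{ss:case1}), choosing $m$ large enough relative to $\deg\phi$ and taking $f,g$ to be sequences supported on the basis elements $\overline{z_j}$ so that the relevant products $\overline{z_a}\,\overline{z_b}=\binom{a+b}{b}\overline{z_{a+b}}$ stay nonzero while being killed once the index reaches $m$ — this is precisely the mechanism that makes $\overline{P_\bfk}$ fail to be Rota-Baxter when $\phi$ is nonconstant. Concretely I anticipate $f=g$ with a single nonzero entry, or $f,g$ each a single monomial placed so that $\lpt(f)$ and $\lpt(g)$ land on basis elements whose products detect the coefficient $a_i$; the characteristic-zero, domain hypothesis on $\bfk$ ensures the resulting binomial coefficients times $a_i$ do not vanish. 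The bookkeeping of which index $m$ and which entries of $f,g$ to use, so that one counterexample works uniformly for all nonconstant $\phi$, is the real work of this part.
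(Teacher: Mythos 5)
Your plan follows the paper's proof essentially step for step: the same unwinding of the recursion \eqref{eq:lptn1} to get (ii)$\Rightarrow$(iii), the same componentwise binomial verification (with the necessary cross terms between $\lpt_0=P\circ\varepsilon_R$ and the shift part) for the positive direction, and the same counterexample machinery, namely Corollary~\ref{co:qq0} applied to the quotient algebras $(\sha(\bfk)/I_m,\overline{P_{\bfk}})$ of Example~\ref{ex:couex} with delta-supported sequences, for (i)$\Rightarrow$(ii). The only detail left open in your plan is resolved in the paper by isolating the \emph{leading} coefficient $a_r$ (rather than the lowest nonzero one): take $m=2$, $g$ the identity, and $f_\ell=\delta_{\ell,2r-1}\overline{z_0}$, so that the obstruction collapses to $a_r^2\overline{z_0}\neq\overline{0}$ uniformly for every nonconstant $\phi$.
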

\begin{proof}
By ~(\mref{eq:lptn1}), we have
\begin{equation}
\lpt_{n}(f)=\sum\limits_{i=0}^ra_if_{n-1+i}\quad\text{for all}\  f\in R^\NN, n\in \NN_+.
\mlabel{eq:lpn}
\end{equation}
In particular, when $n=1$,
\begin{equation}
\lpt_{1}(f)=\sum\limits_{i=0}^ra_if_{i}\quad\text{for all}\  f\in R^\NN.
\mlabel{eq:lpn1}
\end{equation}

\noindent
(\mref{it:case1b}) $\Longrightarrow$ (\mref{it:case1c}).
When $\omega=xy-a_0$, by ~(\mref{eq:lpn}),
we obtain $\lpt_n(f)=a_0f_{n-1}$ for all $f\in R^\NN, n\in\NN_+$. Together with $\lpt_0(f)=P(f_0)$ from the definition of a cover, ~(\mref{eq:lps}) follows.
\smallskip

\noindent
(\mref{it:case1c}) $\Longrightarrow$ (\mref{it:case1b}).
Suppose $r:=\deg\,\phi\geq 1$, so $a_r\neq 0$. Take the Rota-Baxter algebra $(R, P)$ to be $(\sha(\bfk)/I_1, \overline{P_{\bfk}})$ in Example~\mref{ex:couex}, and let
$f:=(f_\ell) \in (\sha(\bfk)/I_1)^\NN$ with $f_\ell:=\delta_{\ell ,r}\overline{z_0}.$
Then ~(\mref{eq:lpn1}) gives $\lpt_1(f)=\sum\limits_{i=0}^{r}a_i\delta_{i, r}\overline{z_0}=a_r\overline{z_0}\neq \overline{0}$ while ~(\mref{eq:lps}) gives $\lpt_1(f)=a_0f_0=a_0\delta_{0, r}\overline{z_0}=\overline{0}$. This is a contradiction. Therefore, $\phi=a_0$.
\smallskip

\noindent
(\mref{it:case1a}) $\Longrightarrow$ (\mref{it:case1b}).
We just need to show that if $r:=\deg\,\phi\geq 1$, then there is a Rota-Baxter algebra $(R,P)$ such that the cover $\lpt$ of $P$ is not a Rota-Baxter operator on $R^\NN$.
When $r \geq 1$, we see $a_r\neq 0$. Let $M_n$ denote the maximum of the subscripts $m$ of the expressions $f_m$ appearing on the right hand side of ~(\mref{eq:lpn}). Then $M_n=n-1+r$.
Take $(R, P):=(\sha(\bfk)/I_2, \overline{P_{\bfk}})$ in Example~\mref{ex:couex}, and $f:=(f_\ell)\in (\sha(\bfk)/I_2)^\NN$ with $f_\ell:=\delta_{\ell, M_r}\overline{z_0}=\delta_{\ell, 2r-1}\overline{z_0}$.
For each $n\in \NN_+$ with $n\leq r$, ~(\mref{eq:lpn}) becomes
\begin{equation*}
\lpt_n(f)=\sum\limits_{i=0}^{r}a_i\delta_{n-1+i, 2r-1}\overline{z_0}= a_r\delta_{n-1+r, 2r-1}\overline{z_0}=
\begin{cases}
a_r\overline{z_0}, & \text{if}\ n=r, \\
\overline{0}, & \text{if} \ 1\leq n<r.
\end{cases}
\end{equation*}
Also by $\lpt_0(f)=P(f_0)=\overline{0}$, we have
\begin{equation}
\lpt_r(f)=a_r\overline{z_0},\quad\lpt_n(f)=\overline{0}\quad\text{for each}\ n\in\NN \ \text{with}\   n<r.
\mlabel{eq:Prr2}
\end{equation}
Let $g:=(g_k)\in (\sha(\bfk)/I_2)^\NN$ with $g_k:=\delta_{k, 0}\overline{z_0}$, i.e., $g$ is the identity element of $(\sha(\bfk)/I_2)^\NN$. Then ~(\mref{eq:lpn1}) and (\mref{eq:Prr2}) give
\begin{equation}
\lpt_1(\lpt(f)g)= \lpt_1(\lpt(f))=\sum_{i=0}^r a_i \lpt_i(f)
= a_r \lpt_r(f) = a_r^2 \overline{z_0}.
\mlabel{eq:fzl0}
\end{equation}
Since $r\leq 2r-1$, we have $f_i=\delta_{i,2r-1}\overline{z_0}=\overline{0}$ for each $i< r$. By ~(\mref{eq:lpn1}), $\lpt_1(f)=\sum\limits_{i=0}^{r}a_if_i=a_rf_r.$
Then we obtain
\begin{equation}
\lpt_0(f)\lpt_1(g)+\lpt_1(f)\lpt_0(g) =a_rf_r \lpt_0(g).
\mlabel{eq:fzl1}
\end{equation}
By ~(\mref{eq:lpn1}), $\lpt_1(f\lpt(g))=\sum\limits_{i=0}^{r}a_i(f\lpt(g))_i$. So applying ~(\mref{eq:hurprod0}), we have
$$\lpt_1(f\lpt(g)) =\sum\limits_{i=0}^{r}a_i\sum\limits_{j=0}^{i}{i\choose j} f_j \lpt_{i-j}(g).$$
Applying $f_j=\overline{0}$ for each $j< r$ again, we obtain
\begin{equation}
 \lpt_1(f\lpt(g)) =a_rf_r \lpt_0(g).
\mlabel{eq:fzl2}
\end{equation}
Combining ~(\mref{eq:fzl0}), (\mref{eq:fzl1}) and (\mref{eq:fzl2}), we obtain
\begin{eqnarray*}
\left(\lpt_1(\lpt(f)g)+\lpt_1(f\lpt(g))\right)- \left(\lpt_0(f)\lpt_1(g)+\lpt_1(f)\lpt_0(g)\right)
&=&(a_r^2\overline{z_0}+a_rf_r \lpt_0(g))-a_rf_r \lpt_0(g)\\
&=&a_r^2\overline{z_0}\neq \overline{0}.
\end{eqnarray*}
Thus by Corollary~\mref{co:qq0}, $\lpt$ is not a Rota-Baxter operator on $R^\NN$.

\smallskip

\noindent
(\mref{it:case1b}) $\Longrightarrow$ (\mref{it:case1a}).
By Proposition~\mref{prop:qq}.(\mref{it:pqn}), we need to show that for any Rota-Baxter algebra $(R, P)$, and all $f, g\in R^\NN$, $n\in\NN$,
$$\sum_{k=0}^{n} {n\choose k}
\lpt_k(f)\lpt_{n-k}(g)=\lpt_n(\lpt(f)g)+
\lpt_n(f\lpt(g))$$
holds. Applying Proposition~\mref{prop:qq}.(\mref{it:pq0}), we have
$$\lpt_0(f)\lpt_0(g)=\lpt_0(\lpt(f)g)+
\lpt_0(f\lpt(g)).$$
Since $\omega=xy-a_0$, ~(\mref{eq:lpn}) gives
\begin{equation}
\lpt_n(h)=a_0h_{n-1}\quad \text{for all} \  h\in R^\NN, n\in\NN_+.
\mlabel{eq:a0Pn}
\end{equation}
Then
\begin{eqnarray}
\lpt_n(\lpt(f)g)
&=&a_0(\lpt(f)g)_{n-1}\quad \text{(by ~(\mref{eq:a0Pn}))}\notag\\
&=&\lpt_0(f)(a_0 g_{n-1})
+\sum_{k=1}^{n-1} {n-1\choose k}
\lpt_k(f)(a_0g_{n-k-1})\quad \text{(by ~(\mref{eq:hurprod0}))}\notag\\
&=&\lpt_0(f)\lpt_n(g)
+\sum_{k=1}^{n-1}  {n-1\choose k}
\lpt_k(f)\lpt_{n-k}(g).\quad \text{(by ~(\mref{eq:a0Pn}))}\mlabel{eq:fzl3}
\end{eqnarray}
Exchanging $f$ and $g$, and then applying the commutativity of the multiplication, we obtain
\begin{eqnarray}
\lpt_n(f\lpt(g))
&=&\lpt_n(f)\lpt_0(g)
+\sum_{k=1}^{n-1} {n-1\choose k}
\lpt_{n-k}(f)\lpt_{k}(g)\notag\\
&=&\lpt_n(f)\lpt_0(g)
+\sum_{k=1}^{n-1} {n-1\choose n-k}
\lpt_{k}(f)\lpt_{ n-k}(g)
\mlabel{eq:fzl4}
\end{eqnarray}
by  exchanging $k$ and $n-k$.
Combining ~(\mref{eq:fzl3}) and (\mref{eq:fzl4}), and ${n\choose k}= {n-1\choose k}+ {n-1\choose n-k}$, we obtain
\begin{eqnarray*}
\sum_{k=0}^{n} {n\choose k}
\lpt_k(f)\lpt_{n-k}(g)&=&\lpt_0(f)\lpt_n(g)+\lpt_n(f)\lpt_0(g)+\sum_{k=1}^{n-1} {n\choose k}
\lpt_k(f)\lpt_{n-k}(g)\\
&=&\lpt_n(\lpt(f)g)+\lpt_n(f\lpt(g)),
\end{eqnarray*}
as required.
\end{proof}

\subsection{Proof of Theorem~\mref{thm:lr}.(\mref{it:weight0}): Case 2}
\mlabel{ss:case2}
In this case $\omega:=xy-y\psi(x)\in\calt$ with  $\psi\in\bfk[x]$. Thus to prove the Case 2 of Theorem~\mref{thm:lr}.(\mref{it:weight0}), we only need to prove the following strengthened form.
\begin{prop}
Let $\omega:=xy-y\psi(x)$ with $\psi(x):=\sum\limits_{j=0}^{s}b_jx^j\neq 0$.
The following statements are equivalent.
\begin{enumerate}
\item
For every Rota-Baxter algebra $(R,P)$ of weight $0$, the cover $\lpt$ of $P$ on the differential algebra $(R^\NN,\partial_R)$ of weight $0$ is again a Rota-Baxter operator of weight $0$;
\mlabel{it:case2a}
\item
$\deg\,\psi=1$ and $b_1=1$, that is, $\omega=xy-(b_0y+yx)$;
\mlabel{it:case2b}
\item
For every Rota-Baxter algebra $(R,P)$ of weight $0$ and each $f\in R^\NN$,
we have
$$\lpt(f)=(\lpt_0(f), \lpt_1(f), \cdots, \lpt_{n}(f),\cdots),$$ where $\lpt_0(f)=P(f_0)$ and for each $n\in\NN_+$, $\lpt_{n}(f)$ is given recursively by
\begin{equation}
\lpt_{n}(f)=b_0\lpt_{n-1}(f)+\lpt_{n-1}(\partial_R f).
\mlabel{eq:rps}
\end{equation}
In particular, if $b_0=0$, then
$$\lpt_{n}(f)=\lpt_{n-1}(\partial_R f)=\cdots=\lpt_{0}(\partial_R^n f)=P(f_n).$$
That is,
$$\lpt(f)=(P(f_0), P(f_1), P(f_2), \cdots).$$
\label{it:case2c}
\end{enumerate}
\label{pp:r}
\end{prop}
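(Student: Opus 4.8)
The plan is to establish the three-way equivalence through the implications $(\ref{it:case2b})\Rightarrow(\ref{it:case2c})$, $(\ref{it:case2c})\Rightarrow(\ref{it:case2b})$, $(\ref{it:case2b})\Rightarrow(\ref{it:case2a})$ and $(\ref{it:case2a})\Rightarrow(\ref{it:case2b})$, following the pattern of Proposition~\ref{pp:l}. The common starting point is \eqref{eq:lptn1}, which in the present case ($\phi=0$) reads $\lpt_n(f)=\sum_{j=0}^s b_j\lpt_{n-1}(\partial_R^j f)$ for $n\in\NN_+$, together with $\lpt_0(f)=P(f_0)$. Since $\lpt_n=\lpt_{n-1}\psi(\partial_R)$, iterating gives the closed form $\lpt_n(f)=\sum_m c_m^{(n)}P(f_m)$ with $c_m^{(n)}:=[x^m]\psi(x)^n$.

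The implication $(\ref{it:case2b})\Rightarrow(\ref{it:case2c})$ is immediate: when $\psi(x)=b_0+x$ the recursion above is exactly \eqref{eq:rps}, and for $b_0=0$ it telescopes to $\lpt_n(f)=\lpt_0(\partial_R^n f)=P(f_n)$. For $(\ref{it:case2c})\Rightarrow(\ref{it:case2b})$, comparing \eqref{eq:rps} with the $n=1$ instance of \eqref{eq:lptn1} forces $\sum_{j=0}^s b_j P(f_j)=b_0 P(f_0)+P(f_1)$ for every weight-$0$ Rota-Baxter algebra $(R,P)$ and all $f\in R^\NN$. Evaluating on $(\sha(\bfk)/I_2,\overline{P_\bfk})$ from Example~\ref{ex:couex} with $f_i=\delta_{i,k}\overline{z_0}$ for each $k\ge1$, and using $\overline{P_\bfk}(\overline{z_0})=\overline{z_1}\ne\overline0$ together with $\bfk$ being a domain, yields $b_k=\delta_{k,1}$, i.e.\ $\psi(x)=b_0+x$.

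For $(\ref{it:case2b})\Rightarrow(\ref{it:case2a})$, when $\psi(x)=b_0+x$ the recursion iterates to $\lpt_n(f)=\sum_{k=0}^n\binom{n}{k}b_0^{n-k}P(f_k)$; thus $\lpt=u\cdot P^\NN$, where $P^\NN$ is the componentwise operator $P^\NN(h)_n=P(h_n)$ on $R^\NN$ and $u:=(b_0^n\bfone_R)_n$ is a unit of $R^\NN$. One checks directly that $(R^\NN,P^\NN)$ is a Rota-Baxter algebra of weight $0$ and that $P^\NN(uv)=u\,P^\NN(v)$, since the components of $u$ are scalars. Hence, using commutativity of $R^\NN$,
\begin{align*}
\lpt(f)\lpt(g)&=u^2P^\NN(f)P^\NN(g)=u^2\bigl(P^\NN(P^\NN(f)g)+P^\NN(fP^\NN(g))\bigr)\\
&=u\,P^\NN(uP^\NN(f)\,g)+u\,P^\NN(f\,uP^\NN(g))=\lpt(\lpt(f)g)+\lpt(f\lpt(g)),
\end{align*}
so $\lpt$ is a Rota-Baxter operator of weight $0$.

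The implication $(\ref{it:case2a})\Rightarrow(\ref{it:case2b})$ is the crux, and I would argue its contrapositive via Corollary~\ref{co:qq0}. Assume $\psi(x)\ne b_0+x$, take $(R,P)=(\sha(\bfk),P_\bfk)$, fix $f$ to be the identity element of $R^\NN$ (so $f_i=\delta_{i,0}z_0$), and let $g$ have components $g_m=\beta_m z_0$. Using the closed form for $\lpt_n$, the product rule $z_iz_j=\binom{i+j}{j}z_{i+j}$ and $P_\bfk(z_i)=z_{i+1}$, a direct computation of the defect in \eqref{eq:qn01} gives
\begin{align*}
&\bigl(\lpt_1(\lpt(f)g)+\lpt_1(f\lpt(g))\bigr)-\bigl(\lpt_0(f)\lpt_1(g)+\lpt_1(f)\lpt_0(g)\bigr)\\
&\qquad=\Bigl(2\bigl(\psi(b_0)-2b_0\bigr)\beta_0+\sum_{m\ge1}\beta_m\,[x^m]\Psi(x)\Bigr)z_2,
\end{align*}
where $\Psi(x):=\psi(b_0+x)+\psi(\psi(x))-2\psi(x)$; the coefficient of $\beta_m$ for $m\ge1$ is $[x^m]\Psi$ because $\frac{1}{m!}\psi^{(m)}(b_0)=[x^m]\psi(b_0+x)$. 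If the cover were a Rota-Baxter operator of weight $0$, then by Corollary~\ref{co:qq0} the defect must vanish for every such $g$, so all the above coefficients vanish; this forces $\psi(b_0)=2b_0$ and $\Psi$ to be the constant $\Psi(0)=2\psi(b_0)-2b_0=2b_0$, i.e.\ the polynomial identity $\psi(b_0+x)+\psi(\psi(x))-2\psi(x)=2b_0$. Since $\bfk$ is a domain, $\deg\psi(\psi(x))=(\deg\psi)^2$ and the remaining terms have degree $\le\deg\psi$, so $\deg\psi\le1$; a direct substitution then rules out $\deg\psi=0$ and the case $\deg\psi=1$ with leading coefficient $\ne1$, leaving only $\psi(x)=b_0+x$, a contradiction. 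Therefore at least one of the coefficients above is nonzero; as $z_2\ne0$ in $\sha(\bfk)$ and $\bfk$ is a domain, the defect is nonzero, and Corollary~\ref{co:qq0} shows the cover of $P_\bfk$ is not a Rota-Baxter operator of weight $0$. I expect this last step — extracting the functional equation for $\psi$ from the Rota-Baxter condition and then using the degree bound forced by the composite $\psi\circ\psi$ — to be the main difficulty.
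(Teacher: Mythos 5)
Your proposal is correct, and in the two substantive directions it takes a genuinely different route from the paper. For (ii)$\Rightarrow$(iii) and (iii)$\Rightarrow$(ii) you do essentially what the paper does (test delta sequences on $(\sha(\bfk)/I_2,\overline{P_{\bfk}})$), just packaged as one family $f_i=\delta_{i,k}\overline{z_0}$ instead of two separate cases. For (ii)$\Rightarrow$(i), the paper runs an induction on the component index $n$, using the recursion $\lpt_{k+1}=b_0\lpt_k+\lpt_k\partial_R$ together with the Leibniz rule and the recursive formula for the Hurwitz product; your factorization $\lpt=u\cdot P^{\NN}$ with $u=(b_0^n\bfone_R)_n$, combined with the observations that $P^\NN$ is itself a weight-$0$ Rota--Baxter operator and that $P^\NN(uv)=uP^\NN(v)$, replaces that induction by a three-line computation and also explains \emph{why} the $b_0$-term is harmless (it is a ``gauge factor''); I checked that $(uP^\NN(f))_n=\sum_k\binom{n}{k}b_0^{n-k}P(f_k)$ indeed agrees with the iterated recursion. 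For (i)$\Rightarrow$(ii), the paper argues case by case ($s=0$, then $s\geq 2$, then $s=1$) with test elements $f_\ell=\delta_{\ell,s^2}\overline{z_0}$ chosen so that only the leading coefficient $b_s$ survives, whereas you extract from a single family of test pairs on $(\sha(\bfk),P_\bfk)$ the polynomial identity $\psi(b_0+x)+\psi(\psi(x))-2\psi(x)=2b_0$ and solve it; I verified your defect formula (the $\beta_0$-coefficient $2(\psi(b_0)-2b_0)$ and the $\beta_m$-coefficient $[x^m]\Psi$ both check out against the Hurwitz product and $z_1^2=2z_2$), and the degree bound $\deg\psi(\psi(x))=s^2>s$ for $s\geq 2$ plays exactly the role of the paper's ``maximal subscript'' device. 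Your version is more unified and conceptually transparent; the paper's is computationally lighter at each individual step. Both uses of the domain/characteristic-zero hypothesis (cancelling the factor $2$, and $b_s^{s+1}\neq 0$) are legitimate under the standing assumptions of Section 2.3.
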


\begin{proof}
Recall from  ~(\mref{eq:lptn1}) that the cover $\lpt$ is given by
\begin{equation}
\lpt_{n}(f)=\sum\limits_{j=0}^sb_j\lpt_{n-1}(\partial_R^j f) \quad \text{for all}\  f\in R^\NN, n\in \NN_+.
\mlabel{eq:recPn(f)}
\end{equation}
In particular,
\begin{equation}
\lpt_1(f)=\sum_{j=0}^sb_j\lpt_0(\partial_R^j f)= \sum_{j=0}^sb_jP(f_j)\quad\text{for all}\  f\in R^\NN.
\mlabel{eq:rP1(f)}
\end{equation}
In general, by iterating ~(\mref{eq:recPn(f)}), we obtain
\begin{equation}
\lpt_{n}(f)= \sum\limits_{j_{1}=0}^sb_{j_{1}}\sum\limits_{j_{2}=0}^sb_{j_{2}}\cdots
\sum\limits_{j_{n}=0}^sb_{j_{n}}\lpt_0({\partial_R}^{j_{1}+j_{2}+\cdots+j_{n}}f)
=\sum\limits_{j_{1}, j_{2},\cdots,j_{n}=0}^sb_{j_{1}} b_{j_{2}}\cdots b_{j_{n}}P(f_{j_{1}+j_{2}+\cdots+j_n}).
\mlabel{eq:rPn(f)}
\end{equation}

\noindent
(\mref{it:case2b}) $\Longrightarrow$ (\mref{it:case2c}). For any $f\in R^\NN$, $\lpt_0(f)=P(f_0)$ follows from the definition of a cover. By $\omega:=xy-(b_0y+yx)$ and ~(\mref{eq:recPn(f)}), we obtain $\lpt_n(f)=b_0\lpt_{n-1}(f)+\lpt_{n-1}(\partial_R f)$ for all $n\in\NN_+.$
\smallskip

\noindent
(\mref{it:case2c}) $\Longrightarrow$ (\mref{it:case2b}).
Assume that Item~(\mref{it:case2c}) holds. Suppose $s:=\deg\,\psi\geq 2$. Take $(R, P):=(\sha(\bfk)/I_2, \overline{P_{\bfk}})$ in Example~\mref{ex:couex}. Let $f:=(f_k) \in (\sha(\bfk)/I_2)^\NN$ with $f_k:=\delta_{k ,s}\overline{z_0}.$
Then ~(\mref{eq:rps}) gives
$$\lpt_1(f)=b_0\lpt_0(f)+\lpt_0(\partial_R f)=b_0P(f_0)+P(f_1)=b_0P(\delta_{0 ,s}\overline{z_0})+P(\delta_{1 ,s}\overline{z_0})=\overline{0}$$
while $\lpt_1(f)=\sum\limits_{j=0}^{s}b_jP(\delta_{j ,s}\overline{z_0})=b_s\overline{z_1}\neq \overline{0}$ by ~(\mref{eq:rP1(f)}). This is a contradiction. Thus $s=\deg\,\psi\leq 1$.

Now take $f:=(f_\ell) \in (\sha(\bfk)/I_2)^\NN$ with $f_\ell:=\delta_{\ell ,1}\overline{z_0}.$
Then ~(\mref{eq:rps}) gives
$$\lpt_1(f)=b_0\lpt_0(f)+\lpt_0(\partial_R f)=b_0P(f_0)+P(f_1)=b_0P(\delta_{0 ,1}\overline{z_0})+P(\delta_{1 ,1}\overline{z_0})=\overline{z_1}$$
while ~(\mref{eq:rP1(f)}) gives
\begin{equation*}
\lpt_1(f)=\sum\limits_{j=0}^{s}b_jP(\delta_{j ,1}\overline{z_0})=
\begin{cases}
b_1P(\overline{z_0})=b_1\overline{z_1}, & \text{if}\ s=1, \\
\overline{0}, & \text{if} \ s=0.
\end{cases}
\end{equation*}
Thus we obtain $s=1$ and $\overline{z_1}=b_1\overline{z_1}$. Then $b_1=1$ since $\overline{z_1}$ is one of the basis elements. Therefore, $\omega=xy-(b_0y+yx)$.
\smallskip

\noindent
(\ref{it:case2a}) $\Longrightarrow$ (\ref{it:case2b}).
Let $s:=\deg\,\psi$.
Consider $(R, P):=(\sha(\bfk)/I_3, \overline{P_{\bfk}})$ and take $g:=(g_k)\in (\sha(\bfk)/I_3)^\NN$ with $g_k:=\delta_{k, 0}\overline{z_0}$, i.e., $g$ is the identity element of $(\sha(\bfk)/I_3)^\NN$. Then $\lpt_1(\lpt(f)g)=\lpt_1(\lpt(f))$.
So applying ~(\mref{eq:rP1(f)}), we obtain
\begin{equation}
\lpt_1(\lpt(f)g)=\sum_{j=0}^s b_j P(\lpt_j(f)).
\mlabel{eq:1lra}
\end{equation}

Suppose $s=0$, i.e., $\psi=b_0$. Then ~(\mref{eq:1lra}) becomes $\lpt_1(\lpt(f)g)=b_0 P(\lpt_0(f))$. Now let $f:=(f_\ell)\in (\sha(\bfk)/I_3)^\NN$ with $f_\ell:=\delta_{\ell, 0}\overline{z_0}$. Applying $f=g$ and the commutativity of the multiplication, we have
\begin{equation}
\lpt_1(f\lpt(g))=\lpt_1(\lpt(f)g)=b_0 P(P(f_0))=b_0\overline{z_2}.
\mlabel{eq:fz1a}
\end{equation}
Applying ~(\mref{eq:tmn}) and (\mref{eq:rP1(f)}), we obtain
\begin{equation}
\lpt_0(f)\lpt_1(g)+\lpt_1(f)\lpt_0(g)=P(f_0) b_0P(g_0)+b_0P(f_0)P(g_0)=2b_0\overline{z_1}^2=4b_0\overline{z_2}.
\mlabel{eq:fz2a}
\end{equation}
Combining ~(\mref{eq:fz1a}) and (\mref{eq:fz2a}) gives
$$\left(\lpt_0(f)\lpt_1(g)+\lpt_1(f)\lpt_0(g)\right)-
\left(\lpt_1(\lpt(f)g)+\lpt_1(f\lpt(g))\right)=
4b_0\overline{z_2}-2b_0\overline{z_2}=2b_0\overline{z_2}\neq \overline{0}.$$
So $\lpt$ is not a Rota-Baxter operator on $R^\NN$ by Corollary~\mref{co:qq0}. So we must have $s\geq 1$.

Now let $s\geq 1$ be given. Then $b_s\neq 0$. Let $M_n$ denote the maximum of the subscripts $m$ of the expressions $f_m$ appearing on the right hand side of ~(\mref{eq:rPn(f)}):
$$M_n:=\max\{j_1+j_2+\cdots+j_n \, |\, 0\leq j_1, j_2, \cdots, j_n\leq s\}= ns.$$
Take $f:=(f_\ell) \in (\sha(\bfk)/I_3)^\NN$ with $f_\ell:=\delta_{\ell ,M_s}\overline{z_0}=\delta_{\ell ,s^2}\overline{z_0}$.
For each $n\in\NN_+$ with $n\leq s$, ~(\mref{eq:rPn(f)}) becomes
\begin{equation*}
\lpt_n(f)=\sum\limits_{j_{1}, j_{2},\cdots,j_{n}=0}^sb_{j_{1}} b_{j_{2}}\cdots b_{j_{n}}P(\delta_{j_{1}+j_{2}+\cdots+j_n, s^2}\overline{z_0})=b_s^nP(\delta_{ns, s^2}\overline{z_0})=
\begin{cases}
b_s^sP(\overline{z_0})=b_s^s\overline{z_1}, & \text{if}\ n=s, \\
\overline{0}, & \text{if} \ 1\leq n<s.
\end{cases}
\end{equation*}
Together with $\lpt_0(f)=P(f_0)=P(\delta_{0,s^2}\overline{z_0})=\overline{0}$ from $s^2>0$, we obtain
\begin{equation}
\lpt_s(f)=b_s^s\overline{z_1},\quad \lpt_n(f)=\overline{0}\quad\text{for each}\ n\in\NN \ \text{with}\  n< s.
\mlabel{eq:0lrs}
\end{equation}
Then ~(\mref{eq:1lra}) gives
\begin{equation}
\lpt_1(\lpt(f)g)=b_s P(\lpt_s(f))= b_s P(b_s^s \overline{z_1}) =b_s^{s+1} \overline{z_2}.
\mlabel{eq:1lr}
\end{equation}
Also by ~(\mref{eq:rP1(f)}) and (\ref{eq:hurprod0}), we have
\begin{equation}
\lpt_1(f\lpt(g))=\sum\limits_{j=0}^sb_jP((f\lpt(g))_j)=
\sum\limits_{j=0}^sb_jP\left(\sum\limits_{i=0}^{j}{j\choose i}f_i\lpt_{j-i}(g)\right) =\left\{\begin{array}{ll} \overline{0}, & \text{if}\ s\geq 2, \\ b_1\overline{z_2}, & \text{if}\ s=1.\end{array} \right.
\mlabel{eq:r20a}
\end{equation}
Here the last equation follows from $f_i=\delta_{i,s^2}\overline{z_0}$ since $0\leq i\leq j\leq s\leq s^2$ with equality holding in the last inequality if and only if $s=1$.
Further applying ~(\mref{eq:0lrs}), we have
\begin{equation}
\lpt_0(f)\lpt_1(g)+\lpt_1(f)\lpt_0(g)=\left\{\begin{array}{ll} \overline{0}, & \text{if}\ s\geq 2, \\ b_1\overline{z_1}\lpt_0(g)=b_1\overline{z_1}^2 = 2b_1\overline{z_2}, & \text{if}\ s=1. \end{array}\right.
\mlabel{eq:rzz}
\end{equation}
Combining ~(\ref{eq:1lr}), (\ref{eq:r20a}) and (\ref{eq:rzz}), we obtain
\begin{equation*}
\left(\lpt_1(\lpt(f)g)+\lpt_1(f \lpt(g))\right)-\left(\lpt_0(f)\lpt_1(g)+\lpt_1(f)\lpt_0(g)\right)= \left\{\begin{array}{ll} b_s^{s+1}\overline{z_2}\neq \overline{0}, &\text{if}\ s\geq 2, \\ b_1(b_1-1)\overline{z_2}&\text{if}\  s=1. \end{array} \right.
\mlabel{eq:f01}
\end{equation*}
Then by Corollary~\mref{co:qq0}, when $s\geq 2$, $\lpt$ is not a Rota-Baxter operator.
When $s= 1$, we obtain $b_1-1=0$, i.e., $b_1=1$.

Therefore, we must have $s=1$ and $b_1=1$.
\smallskip

\noindent
(\ref{it:case2b}) $\Longrightarrow$ (\ref{it:case2a}).
Let $(R,P)$ be an arbitrary Rota-Baxter algebra $(R,P)$. We will prove that $\lpt$ is a Rota-Baxter operator on $R^\NN$ by verifying the componentwise formulation
\begin{equation}
(\lpt(f)\lpt(g))_{n}=\lpt_{n}(\lpt(f)g)+\lpt_{n}(f\lpt(g)) \quad \text{for all } f, g\in R^\NN, n\in\NN,
\mlabel{eq:induction}
\end{equation}
of the Rota-Baxter relation in ~(\mref{eq:bax1}). We will carry out the verification by induction on $n$.

First by Proposition~\mref{prop:qq}.(\mref{it:pq0}), we have
$$(\lpt(f)\lpt(g))_{0}=\lpt_{0}(\lpt(f)g)+\lpt_{0}(f\lpt(g)).$$

Assume that for a given $k\in\NN$, ~(\mref{eq:induction}) holds.
Then we derive
\begin{eqnarray*}
&&\lpt_{k+1}(\lpt(f)g)+\lpt_{k+1}(f\lpt(g))\\
&=&(b_0\lpt_k+\lpt_k\partial_R)(\lpt(f)g+f\lpt(g))\quad \text{(by ~(\mref{eq:recPn(f)}))}\\
&=&b_0\lpt_k\left(\lpt(f)g+f\lpt(g)\right)+\lpt_k\left((\partial_R\lpt)(f)g
+\lpt(f)\partial_R(g)\right)\\
&&+\lpt_k\left(\partial_R(f)\lpt(g)+f(\partial_R\lpt)(g)
\right)\ \text{(by ~(\mref{eq:der10}))}\\
&=&b_0\lpt_k\left(\lpt(f)g+f\lpt(g)\right)+\lpt_k\left((b_0\lpt+\lpt\partial_R)(f)g
+\lpt(f)\partial_R(g)\right)\\
&&+\lpt_k\left(\partial_R(f)\lpt(g)+f(b_0\lpt+\lpt\partial_R)(g)
\right)\quad\text{(by ~(\mref{eq:lpproperty}))}\\
&=&2b_0\left(\lpt(f)\lpt(g)\right)_k+\left(\lpt(\partial_Rf)\lpt(g)\right)_k
+\left(\lpt(f)\lpt(\partial_Rg)\right)_k\quad\text{(by the induction hypothesis)}\\
&=&\left((b_0\lpt+\lpt\partial_R)(f)\lpt(g)\right)_k+
\left(\lpt(f)(b_0\lpt+\lpt\partial_R)(g)\right)_k\\
&=&\left((\partial_R\lpt)(f)\lpt(g)+
\lpt(f)(\partial_R\lpt)(g)\right)_k\quad\text{(by ~(\mref{eq:lpproperty}))}\\
&=&\left(\lpt(f)\lpt(g)\right)_{k+1}\quad\text{(by ~(\mref{eq:recHur}))}.
\end{eqnarray*}
This completes the induction.
\end{proof}

\subsection{Proof of Theorem~\mref{thm:lr}.(\mref{it:weight0}): Case 3}
\mlabel{ss:case3}

In this case, $\omega:=xy-(\phi(x)+y\psi(x))\in\calt$, where $\phi, \psi\in\bfk[x]$ are nonzero with $r:=\deg\,\phi, s:=\deg\,\psi\in \NN$.
To prove Theorem~\mref{thm:lr}.(\mref{it:weight0}) in this case, we will apply the same idea as in the previous two cases, namely by taking the maximum of the subscripts. But in order for the idea to work, we need to partition $\NN^2$ into eight subsets before carrying out the proof in Proposition~\mref{pp:lr}.

Let $(R, P)$ denote an arbitrary Rota-Baxter algebra, and $\phi(x):=\sum\limits_{i=0}^{r}a_ix^i$ and $\psi(x):=\sum\limits_{j=0}^{s}b_jx^j$. As in ~(\mref{eq:lptn1}), we have
\begin{equation}
\lpt_{n}(f)=\sum_{i=0}^ra_i f_{n-1+i} + \sum_{j=0}^s b_j\lpt_{n-1}(\partial_R^j f)\quad \text{for all}\  f\in R^\NN, n\in \NN_+.
\mlabel{eq:recur3}
\end{equation}
In particular, if $n=1$, then ~(\mref{eq:recur3}) becomes
\begin{equation}
\lpt_1(f)=\sum\limits_{i=0}^ra_i f_{i} + \sum\limits_{j=0}^s b_jP(f_j).
\mlabel{eq:3P1}
\end{equation}
Expanding the recursion in ~(\mref{eq:recur3}), we obtain
\begin{eqnarray}
\lpt_{n}(f)&=&\sum^r_{i=0}a_{i} f_{n-1+i} + \sum^s_{j_1=0} b_{j_1}\lpt_{n-1}(\partial_R^{j_1} f)\quad\text{(by ~(\mref{eq:recur3}))}\notag\\
&=&\sum^r_{i=0}a_{i} f_{n-1+i} + \sum^s_{j_1=0}b_{j_1}\left(
\sum^r_{i=0}a_{i} f_{n-2+i+j_1} + \sum^s_{j_2=0} b_{j_2}\lpt_{n-2}(\partial_R^{j_1+j_2} f)
\right)\quad\text{(by ~(\mref{eq:recur3}))}\notag\\
&=& \sum^r_{i=0}a_{i} f_{n-1+i} + \sum^r_{i=0}\sum^s_{j_1=0} a_{i}b_{j_1}  f_{n-2+i+j_1} + \sum^s_{j_1,j_2=0} b_{j_1}b_{j_2}\lpt_{n-2}(\partial_R^{j_1+j_2} f).\notag
\end{eqnarray}
Repeating this process leads to
\begin{equation}
\lpt_n(f)=\sum^r_{i=0}\sum_{k=0}^{n-1}\sum^s_{j_1,\cdots,j_{k}=0}a_ib_{j_1}\cdots b_{j_{k}} f_{n-1-k+i+j_1+\cdots+j_{k}} +\sum^s_{j_1, \cdots,j_{n}=0}b_{j_1}\cdots b_{j_{n}}P(f_{j_1+ \cdots+j_{n}})
\mlabel{eq:3Pn}
\end{equation}
for all $f\in R^\NN, n\in\NN_+$.

Let $M_n$ denote the maximum of the subscripts of the expressions $f_m$ appearing on the right hand side of ~(\mref{eq:3Pn}):
$$
M_n:
=\max\left\{n-1-k+i+j_1+\cdots +j_k,\, j_1+\cdots +j_n\,|\, 0\leq k\leq n-1, \,0\leq i\leq r,\, 0\leq j_1, \cdots, j_n\leq s\right\}. $$
By first partitioning $s\in\NN$ into $s>1$ , $s=1$ and $s<1$ (that is $s=0$) and then partitioning each of the three cases into the subcases of $r>s$, $r=s$ and $r<s$ (the latter subcase is valid only when $s>1$ and $s=1$), we partition $(r,s)\in \NN^2$ into eight cases in the following lemma.

\begin{lemma}
Let $n\in\NN_+$ and $f:=(f_\ell)\in R^\NN$ with $f_\ell:=\delta_{\ell,M_n}u$, where $u$ is a given nonzero element in $R$. The possibilities of $M_n$ and $\lpt_\sigma(f)$ for all $\sigma \leq n$ are as follows.
\begin{enumerate}
\item
If $s>1$ and $r>s$, then $M_n=r+(n-1)s$, $\lpt_n(f)=a_rb_s^{n-1}u$ and $\lpt_\sigma(f)=0$ for  $\sigma< n$;
\mlabel{it:t1}
\item
If $s>1$ and $r=s$, then $M_n =ns$, $\lpt_n(f)=a_rb_s^{n-1}u+b_s^nP(u)$ and $\lpt_\sigma(f)=0$ for  $\sigma< n$;
\mlabel{it:t2}
\item
If $s>1$ and $r<s$, then $M_n =ns$, $\lpt_n(f)=b_s^nP(u)$ and $\lpt_\sigma(f)=0$ for  $\sigma< n$;
\mlabel{it:t3}
\item
If $s=1$ and $r>s$, then $M_n=n-1+r$, $\lpt_n(f)=\sum\limits_{k=0}^{n-1} a_rb_{1}^k u$ and $\lpt_\sigma(f)=0$ for  $\sigma< n$;
\mlabel{it:t4}
\item
If $s=1$ and $r=s$, then $M_n=n$, $\lpt_n(f)=\sum\limits_{k=0}^{n-1} a_rb_{1}^k u +b_{1}^n P(u)$ and $\lpt_\sigma(f)=0$ for  $\sigma< n$;
\mlabel{it:t5}
\item
If $s=1$ and $r<s$, then $M_n=n$, $\lpt_n(f)=b_{1}^n P(u)$ and $\lpt_\sigma(f)=0$ for  $\sigma< n$;
\mlabel{it:t6}
\item
If $s=0$ and $r>s$, then $M_n=n-1+r$, $\lpt_n(f)=a_r u$ and $\lpt_\sigma(f)=0$ for  $\sigma< n$;
\mlabel{it:t7}
\item
If $s=0$ and $r=s$, then $M_n=n-1$, $\lpt_n(f)=a_r u+\delta_{n, 1}b_{0} P(u)$,  $\lpt_\sigma(f)=\delta_{n, 1}P(u)$ for  $\sigma< n$. \mlabel{it:t8}
\end{enumerate}
\mlabel{lem:cases}
\end{lemma}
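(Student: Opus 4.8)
The plan is to read everything off the closed-form expansion ~(\mref{eq:3Pn}) of $\lpt_n$ --- which holds verbatim for $\lpt_\sigma$ with $\sigma\in\NN_+$ after replacing $n$ by $\sigma$ --- evaluated on the test sequence $f_\ell:=\delta_{\ell,M_n}u$, together with the clause $\lpt_0(f)=P(f_0)$ from the definition of a cover. Since $f_m\neq 0$ only when $m=M_n$, a summand of ~(\mref{eq:3Pn}) (with $\sigma$ in place of $n$) contributes to $\lpt_\sigma(f)$ exactly when the subscript of the sequence entry of $f$ appearing in it equals $M_n$. Thus the lemma reduces to two bookkeeping tasks: (a) compute the largest subscript $M_n$ occurring in ~(\mref{eq:3Pn}); (b) for each $\sigma\leq n$, enumerate the index tuples $(i,k,j_1,\ldots,j_k)$ and $(j_1,\ldots,j_\sigma)$ whose attached subscript equals $M_n$, and add up the matching summands, namely $a_ib_{j_1}\cdots b_{j_k}\,u$ and $b_{j_1}\cdots b_{j_\sigma}\,P(u)$ respectively.

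For (a): a first-type subscript $n-1-k+i+j_1+\cdots+j_k$ is maximized, for fixed $k$, by taking $i=r$ and $j_1=\cdots=j_k=s$, which turns it into $n-1+r+k(s-1)$; as a function of $k\in\{0,\ldots,n-1\}$ this increases if $s>1$, is constant if $s=1$, and decreases if $s=0$, so the largest first-type subscript is $r+(n-1)s$ when $s>1$ and $n-1+r$ when $s\leq 1$. A second-type subscript $j_1+\cdots+j_n$ is at most $ns$. Comparing these two numbers according as $r>s$, $r=s$ or $r<s$ (the case $r<s$ being vacuous once $s=0$) yields the eight values of $M_n$ in the statement, and one also reads off that $M_\bullet$ is strictly increasing in its index.

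For (b): by strict monotonicity of $M_\bullet$, for $\sigma<n$ every subscript in the $\sigma$-th instance of ~(\mref{eq:3Pn}) is $\leq M_\sigma<M_n$, so no summand survives and $\lpt_\sigma(f)=0$ --- the only exception being that $\lpt_0(f)=P(f_0)=P(\delta_{0,M_n}u)$ is nonzero exactly when $M_n=0$, which occurs only in case ~(\mref{it:t8}) with $n=1$, accounting for the $\delta_{n,1}P(u)$ there. For $\sigma=n$: a surviving first-type summand must have $i=r$ and $j_1=\cdots=j_k=s$, and then its subscript equals $M_n$ only for $k=n-1$ when $s>1$, for every $k\in\{0,\ldots,n-1\}$ when $s=1$, and for $k=0$ when $s=0$, provided $r\geq s$ (there is no surviving first-type summand once $r<s$); this gives the first-type contribution $a_rb_s^{\,n-1}u$ if $s>1$, $\big(\sum_{k=0}^{n-1}a_rb_1^{\,k}\big)u$ if $s=1$, and $a_ru$ if $s=0$. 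A surviving second-type summand must have $j_1=\cdots=j_n=s$ and contributes $b_s^{\,n}P(f_{ns})=b_s^{\,n}P(u)$ exactly when $ns=M_n$ --- which forces $n=1$ in case ~(\mref{it:t8}), producing the $b_0P(u)$ there --- and nothing otherwise. Matching these against the eight values of $M_n$ then reproduces every entry of the lemma.

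The main obstacle is the combinatorial case analysis in (b): one must check that maximality genuinely forces $i=r$ and all $j_\ell=s$, so that lowering $i$ or some $j_\ell$ can never be offset by a different choice of $k$; pin down the admissible values of $k$ correctly in each regime of $s$; and not overlook the degeneracy of case ~(\mref{it:t8}), where it is the cover clause $\lpt_0(f)=P(f_0)$, rather than ~(\mref{eq:3Pn}), that makes $\lpt_\sigma(f)$ nonzero for some $\sigma<n$. The rest is direct substitution into ~(\mref{eq:3Pn}) and collection of terms.
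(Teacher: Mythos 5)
Your proposal is correct and follows essentially the same route as the paper: substitute the test sequence into the expansion~(\mref{eq:3Pn}), observe that a summand survives only when its subscript attains $M_n$ (forcing $i=r$ and all $j_\ell=s$, then optimizing over $k$ according to whether $s>1$, $s=1$ or $s=0$, and comparing with the second-type subscript $ns$), and use the strict monotonicity of $M_n$ in $n$ together with the cover clause $\lpt_0(f)=P(f_0)$ to dispose of $\sigma<n$, including the degenerate case~(\mref{it:t8}). The paper packages the same computation as the single Kronecker-delta identity~(\mref{eq:M00}) before splitting into cases, but the substance is identical.
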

\begin{proof}
By the choice of $f$, ~(\mref{eq:3Pn}) becomes
$$
\lpt_n(f)=\sum^r_{i=0}\sum_{k=0}^{n-1}\sum^s_{j_1, \cdots,j_{k}=0}a_ib_{j_1} \cdots b_{j_{k}} \delta_{n-1-k+i+j_1+ \cdots+j_{k}, M_n}u
+\sum^s_{j_1, \cdots,j_{n}=0}b_{j_1} \cdots b_{j_{n}}P(\delta_{j_1+  \cdots+j_{n}, M_n}u).
$$
Since the two indices of the Kronecker deltas are possibly equal only when $i$ and $j_1, \cdots, j_n$ are maximized, we have
\begin{eqnarray}
\lpt_n(f)&=&\sum_{k=0}^{n-1} a_rb_{s}^k \delta_{n-1-k+r+ks, M_n}u +b_{s}^nP(\delta_{ns, M_n}u)\notag\\
&=&\sum_{k=0}^{n-1} a_rb_{s}^k \delta_{n-1+r+k(s-1), M_n}u +b_{s}^n\delta_{n-1+s+(n-1)(s-1), M_n}P(u).
\mlabel{eq:M00}
\end{eqnarray}

We first prove the first and second equations in all the cases of the lemma.

When $s>1$, namely $s-1>0$, by maximizing $k$, ~(\mref{eq:M00}) becomes
\begin{eqnarray}
\lpt_n(f)&=&a_rb_{s}^{n-1} \delta_{n-1+r+(n-1)(s-1), M_n}u +b_{s}^n\delta_{n-1+s+(n-1)(s-1), M_n}P(u)\notag\\
&=&a_rb_{s}^{n-1} \delta_{r+(n-1)s, M_n}u +b_{s}^n\delta_{s+(n-1)s, M_n}P(u).\notag
\end{eqnarray}
Thus when $r>s$  (resp.,  $r=s$, resp., $r<s$), we obtain $M_n=r+(n-1)s$ (resp.,  $M_n=ns$, resp., $M_n=ns$) and $\lpt_n(f)=a_rb_s^{n-1}u$  (resp.,  $\lpt_n(f)=a_rb_s^{n-1}u+b_s^nP(u)$, resp., $\lpt_n(f)=b_s^nP(u)$), proving the first and second equations in cases~(\mref{it:t1}) -- (\mref{it:t3}) of the lemma.

When $s=1$, namely $s-1=0$, (\mref{eq:M00}) becomes
$$\lpt_n(f)=\sum_{k=0}^{n-1} a_rb_{1}^k \delta_{n-1+r, M_n}u +b_{1}^n\delta_{n-1+s, M_n}P(u).$$
Thus when $r>s=1$  (resp.,  $r=s=1$, resp., $r<s=1$), we obtain $M_n=n-1+r$ (resp.,  $M_n=n$, resp., $M_n=n$) and $\lpt_n(f)=\sum\limits_{k=0}^{n-1} a_rb_{1}^k u$  (resp.,  $\lpt_n(f)=\sum\limits_{k=0}^{n-1} a_rb_{1}^k u +b_{1}^n P(u)$, resp., $\lpt_n(f)=b_{1}^n P(u)$), proving the first and second equations in Item~(\mref{it:t4}) -- (\mref{it:t6}) of the lemma.

When $s<1$, namely $s=0$ and $s-1=-1$, by minimizing $k$, ~(\mref{eq:M00}) becomes
$$\lpt_n(f)=a_r\delta_{n-1+r, M_n}u +b_{0}^n\delta_{0, M_n}P(u).$$
Thus when $r>s=0$  (resp.,  $r=s=0$), we obtain $M_n=n-1+r$  (resp.,  $M_n=n-1$) and $\lpt_n(f)=a_r u$  (resp.,  $\lpt_n(f)=a_r u+\delta_{n, 1}b_{0} P(u)$), proving the first and second equations in Item~(\mref{it:t7}) -- (\mref{it:t8}) of the lemma.

Now we prove the third equations in all the cases of the lemma.
In each of the cases~(\mref{it:t1}) -- (\mref{it:t7}), since $M_n>0$, we have $\lpt_0(f)=P(f_0)=P(\delta_{0, M_n})=0$. For case~(\mref{it:t8}), $$\lpt_0(f)=P(f_0)=P(\delta_{0, M_n}u)=P(\delta_{0, n-1}u)=\delta_{n, 1}P(u).$$ This proves the third equations when $\sigma=0$.

In each of the cases~(\mref{it:t1}) -- (\mref{it:t8}), take $\sigma$ with $1\leq \sigma <n$. Then $n>1$ and so $M_\sigma <M_n$. Thus the expressions $f_\tau$ appearing in $\lpt_\sigma(f)$ all vanish since the subscripts of the expressions are strictly smaller than $M_n$. Therefore, $\lpt_\sigma(f)=0$.
This completes the proof of Lemma~\mref{lem:cases}.
\end{proof}

We also need the following facts to proceed.

For a Rota-Baxter algebra $(R, P)$, take $f:=(f_\ell)$ and $g:=(g_k)$ in $R^\NN$.
Then
\begin{eqnarray}
\lpt_1(\lpt(f)g)&=&\sum\limits_{i=0}^ra_i (\lpt(f)g)_{i} + \sum\limits_{j=0}^s b_jP((\lpt(f)g)_j)\quad\text{(by ~(\mref{eq:3P1}))}\notag\\
&=&\sum\limits_{i=0}^ra_i \sum_{\sigma=0}^{i} {i\choose \sigma}\lpt_\sigma(f)g_{i-\sigma} + \sum\limits_{j=0}^s b_jP\left(\sum_{\tau=0}^{j} {j\choose \tau}\lpt_\tau(f)g_{j-\tau}\right)\quad\text{(by ~(\mref{eq:hurprod0}))}
\mlabel{eq:RBr10}
\end{eqnarray}
and
\begin{eqnarray}
\lpt_1(f\lpt(g))&=&\sum\limits_{i=0}^ra_i (f\lpt(g))_{i} + \sum\limits_{j=0}^s b_jP((f\lpt(g))_j)\quad\text{(by ~(\mref{eq:3P1}))}\notag\\
&=&\sum\limits_{i=0}^ra_i \sum_{\sigma=0}^{i} {i\choose \sigma}f_{\sigma}\lpt_{i-\sigma}(g) + \sum\limits_{j=0}^s b_jP\left(\sum_{\tau=0}^{j} {j\choose \tau}f_{\tau}\lpt_{j-\tau}(g)\right).\quad\text{(by ~(\mref{eq:hurprod0}))}
\mlabel{eq:RBr2}
\end{eqnarray}
Let $N_f$ denote the maximal subscript of expressions $f_m$ appearing in the right hand side of ~(\mref{eq:RBr2}):
\begin{equation}
N_f:
=\max\left\{\sigma,\, \tau\,|\, \sigma\leq i\leq r, \,\tau\leq j\leq s\right\}=\max\{r,\, s\}.
\mlabel{eq:Nf}
\end{equation}

Now take $(R,P):=(\sha(\bfk)/I_m, \overline{P_{\bfk}})$ from Example~\mref{ex:couex}.
Let $g=(g_k)$ with $g_k:=\delta_{k, 0}\overline{z_0}$, i.e., $g$ is the identity element of $(\sha(\bfk)/I_m)^\NN$. Then by ~(\mref{eq:3P1}),
\begin{equation}
\lpt_1(\lpt(f)g)=\lpt_1(\lpt(f))=\sum\limits_{i=0}^ra_i \lpt_i(f) + \sum\limits_{j=0}^s b_jP(\lpt_j(f)).
\mlabel{eq:RBr11}
\end{equation}
~(\mref{eq:3P1}) also gives
\begin{equation*}
\lpt_1(g)=a_0g_0+b_0P(g_0)=a_0\overline{z_0}+b_0\overline{z_1}.
\end{equation*}
Then
\begin{equation}
\lpt_0(f)\lpt_1(g)+\lpt_1(f)\lpt_0(g) =
\lpt_0(f)(a_0\overline{z_0}+b_0\overline{z_1})+\lpt_1(f)\overline{z_1}.
\mlabel{eq:RBl0}
\end{equation}

Now we are ready to prove Case~3 of Theorem~\mref{thm:lr}.(\mref{it:weight0}).
\begin{prop}
For each $\omega:=xy-(\phi(x)+y\psi(x))\in \calt$ with nonzero $\phi, \psi\in \bfk[x]$,
there is a Rota-Baxter algebra $(R, P)$ of weight $0$ such that the cover $\lpt$ of $P$ on $(R^\NN, \partial_R)$ is not a Rota-Baxter operator of weight $0$.
\mlabel{pp:lr}
\end{prop}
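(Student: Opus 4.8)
The plan is to establish the non-Rota-Baxter property in Case 3 by adapting the "maximal subscript" counterexample strategy already used in Cases 1 and 2, but now powered by the case analysis of Lemma~\mref{lem:cases}. Since Corollary~\mref{co:qq0} reduces the problem (in weight $0$) to producing a single pair $f, g \in R^\NN$ for which the discrepancy
\[
\Delta(f,g):=\left(\lpt_1(\lpt(f)g)+\lpt_1(f\lpt(g))\right)-\left(\lpt_0(f)\lpt_1(g)+\lpt_1(f)\lpt_0(g)\right)
\]
is nonzero, the whole argument comes down to evaluating $\Delta(f,g)$ in each of the eight cases of Lemma~\mref{lem:cases}, using the explicit Rota-Baxter algebra $(R,P):=(\sha(\bfk)/I_m, \overline{P_{\bfk}})$ for a suitably chosen $m$ and the test vector $f_\ell:=\delta_{\ell, M_n}u$ together with $g$ the identity of $R^\NN$.

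First I would fix the ingredients: take $g$ to be the identity of $R^\NN$ so that $\lpt_1(\lpt(f)g)=\lpt_1(\lpt(f))$ is governed by ~(\mref{eq:RBr11}), and $\lpt_0(f)\lpt_1(g)+\lpt_1(f)\lpt_0(g)$ is governed by ~(\mref{eq:RBl0}). Then for each case ~(\mref{it:t1})--(\mref{it:t8}) I would choose $n$ large enough (typically $n=s$ or $n=r$, mimicking the choices "$f_\ell=\delta_{\ell,M_r}$" and "$f_\ell=\delta_{\ell,M_s}$" in Cases 1 and 2) so that Lemma~\mref{lem:cases} gives $\lpt_\sigma(f)=0$ for $\sigma<n$ and a clean nonzero value for $\lpt_n(f)$. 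Plugging these into ~(\mref{eq:RBr11}), ~(\mref{eq:RBl0}) and ~(\mref{eq:RBr2}) collapses the sums dramatically: most terms vanish because either the relevant $\lpt_\sigma(f)$ is zero or the subscript of $f_\tau$ is strictly below $M_n$ (using $N_f=\max\{r,s\}<M_n$ for $n$ not too small, cf. ~(\mref{eq:Nf})). The residue is a small polynomial in $a_r, b_s$ times a single basis element $\overline{z_t}$ of $\sha(\bfk)/I_m$, and I would choose $m$ so that this $\overline{z_t}$ is nonzero. One must also be careful to pick $m$ large enough that $P(\cdots)$ does not get killed prematurely (just as $I_3$ rather than $I_2$ was needed in parts of Case 2 because of the double application of $P$), but small enough that the counterexample is explicit; taking $m = $ one more than the largest subscript appearing (roughly $m \ge 2M_n - N_f + 2$ or similar) should suffice, and I would compute the exact bound case by case.

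The cases split naturally. Cases ~(\mref{it:t1}), ~(\mref{it:t4}), ~(\mref{it:t7}) (where $r>s$, so $\phi$ "dominates") should behave like Case 1: the leading term $a_r^2\overline{z_t}$ survives and is nonzero since $a_r\neq 0$ and $\bfk$ is a domain. Cases ~(\mref{it:t3}), ~(\mref{it:t6}) (where $r<s$, so $\psi$ dominates and $s\ge 2$ forces $b_s\neq 0$ with $\deg\psi\ge 2$) should behave like the "$s\ge 2$" branch of Case 2, yielding a surviving $b_s^{s+1}\overline{z_t}\neq 0$. Cases ~(\mref{it:t2}), ~(\mref{it:t5}) ($r=s$) are the genuinely mixed ones: here both the $a_r$-term and the $b_s$-term of $\lpt_n(f)$ contribute, and I expect the discrepancy to be something like $a_r^2 \overline{z_t}$ or $a_r(\text{const})\overline{z_t}$ with the $a_r^2$ piece not cancellable — this is the subtle case where one must verify no accidental cancellation occurs. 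Case ~(\mref{it:t8}) ($r=s=0$, i.e. $\phi=a_0$, $\psi=b_0$ both constants) is a degenerate one that can be handled directly, essentially as in the "$s=0$" subcase of Case 2's proof of ~(\ref{it:case2a})$\Rightarrow$(\ref{it:case2b}), where the discrepancy came out to $2b_0\overline{z_2}$ — but now with $\phi=a_0\neq 0$ as well one gets an even more robust nonzero term.

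\textbf{Main obstacle.} The hard part will be the $r=s$ cases ~(\mref{it:t2}) and ~(\mref{it:t5}): there $\lpt_n(f)=a_rb_s^{n-1}u+b_s^nP(u)$ mixes an algebra element and a $P$-value, so when this is fed through ~(\mref{eq:RBr11}) and ~(\mref{eq:RBr2}) one gets several products of $\overline{z_i}$'s via the divided-power multiplication ~(\mref{eq:tmn}) and several nested applications of $\overline{P_{\bfk}}$, and one must check that the coefficients do not conspire to vanish in $\bfk$. I expect to resolve this by isolating the top-degree component (the term of largest $z$-subscript, which comes from $P(P(u))$ or from $P(u)\cdot P(u)$) where only one product contributes, so that the coefficient is a nonzero monomial in $a_r, b_s$ — and then the domain hypothesis on $\bfk$ finishes it. If a single test vector does not separate all sub-possibilities (e.g. a specific relation among $a_r, b_s, \lambda$ forcing a cancellation at one degree), I would use two test vectors or two values of $n$ and argue that the discrepancy cannot vanish for both simultaneously.
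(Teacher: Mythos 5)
Your proposal is correct and follows essentially the same route as the paper: reduction via Corollary~\ref{co:qq0}, the eight-case split of Lemma~\ref{lem:cases}, the quotient Rota--Baxter algebras $(\sha(\bfk)/I_m,\overline{P_\bfk})$ with test vectors $f_\ell=\delta_{\ell,M_n}u$ and (mostly) $g$ the identity, and isolation of a nonzero monomial in $a_r,b_s$ using that $\bfk$ is a domain. Your anticipated fallback of switching test vectors is exactly what is needed in the subcases where the identity choice of $g$ cancels (namely $s=1$, $r>s$ with $\sum_{k=0}^{r-1}b_1^k=0$, where the paper takes $g_k=\delta_{k,1}\overline{z_0}$, and $s=1$, $r<s$ with $b_1=1$ as well as $r=s=0$, where the paper takes $f=g$ to be the identity), and the $r=s$ cases you flag as delicate are handled in the paper by choosing $u$ so that $P(u)=\overline{0}$ in the quotient, killing the mixed term.
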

\begin{proof}
By Corollary~\mref{co:qq0}, we only need to prove that, for each given $\omega$ as in the proposition, there is a Rota-Baxter algebra $(R,P)$ and $f,g\in R^\NN$ such that
\begin{equation}
\left(\lpt_1(\lpt(f)g)+\lpt_1(f\lpt(g))\right)-
\left(\lpt_0(f)\lpt_1(g)+\lpt_1(f)\lpt_0(g)\right)
\neq 0.
\mlabel{eq:qn01b}
\end{equation}
We will divide the proof into the eight cases of $r:=\deg\,\phi$ and $s:=\deg\,\psi$ as in Lemma~\mref{lem:cases}.

Denote $\phi(x):=\sum\limits_{i=0}^{r}a_ix^i$ and $\psi(x):=\sum\limits_{j=0}^{s}b_jx^j$. So $a_r, b_s\neq 0$.
\smallskip

\noindent
{\bf Case (i). $s> 1, r>s.$}
In Lemma~\mref{lem:cases}.(\mref{it:t1}), take $(R, P):=(\sha(\bfk)/I_1, \overline{P_{\bfk}})$, $n:=r$ and $u:=\overline{z_{0}}$. Then the lemma gives $M_r=r+(r-1)s$, $\lpt_r(f)=a_r b_s^{r-1}\overline{z_{0}}$ and $\lpt_\sigma(f)=\overline{0}$ for $\sigma< r$.  Let $g\in (\sha(\bfk)/I_{1})^\NN$ be the identity element.
Since $\lpt_\sigma(f)=\overline{0}$ for $\sigma<r$ and $r> 2$ by the assumption of Case (i), ~(\ref{eq:RBr11}) and (\ref{eq:RBl0}) give

$$\lpt_1(\lpt(f)g)=a_r\lpt_r(f)\quad \text{and}\quad \lpt_0(f)\lpt_1(g)+\lpt_1(f)\lpt_0(g)=\overline{0},$$
respectively. Also in this case, $N_f=\max\{r,\,s\}=r$ in ~(\mref{eq:Nf}). So we have $N_f<M_r$.
Then by $f_\ell=\delta_{\ell, M_r}\overline{z_{0}}$, ~(\mref{eq:RBr2}) gives $\lpt_1(f\lpt(g))=\overline{0}$.
Thus we obtain
$$\left(\lpt_1(\lpt(f)g)+\lpt_1(f\lpt(g))\right)-
\left(\lpt_0(f)\lpt_1(g)+\lpt_1(f)\lpt_0(g)\right)
=a_r\lpt_r(f)=a_r^2 b_s^{r-1}\overline{z_{0}}\neq \overline{0}.$$
This is what we need.

We use the similar argument as in Case (i) to prove other cases as follows.
\smallskip

\noindent
{\bf Case (ii). $s>1, r=s$.}
In Lemma~\mref{lem:cases}.(\mref{it:t2}), taking $(R, P):=(\sha(\bfk)/I_2, \overline{P_{\bfk}})$,
$n:=s$ and $u:=\overline{z_{1}}$ gives $M_s=s^2$, $\lpt_{s}(f)=a_rb_s^{s-1}\overline{z_{1}}+b_s^sP(\overline{z_{1}})
=a_rb_s^{s-1}\overline{z_{1}}$ and $\lpt_\sigma(f)=\overline{0}$ for $\sigma< s$. Let $g\in (\sha(\bfk)/I_{2})^\NN$ be the identity.
Then by ~(\ref{eq:RBr11}) and (\ref{eq:RBl0}), we have $$\lpt_1(\lpt(f)g)=a_r\lpt_{r}(f)+b_sP(\lpt_{s}(f))=a_r^2b_s^{s-1}\overline{z_{1}}\quad
\text{and}\quad
\lpt_0(f)\lpt_1(g)+\lpt_1(f)\lpt_0(g)=\overline{0},$$
respectively. Further by $N_f=s<M_s$ and $f_\ell=\delta_{\ell, M_s}\overline{z_{1}}$, ~(\mref{eq:RBr2}) becomes $\lpt_1(f\lpt(g))=\overline{0}$.
Thus we obtain
$$\left(\lpt_1(\lpt(f)g)+\lpt_1(f\lpt(g))\right)-
\left(\lpt_0(f)\lpt_1(g)+\lpt_1(f)\lpt_0(g)\right)
=a_r^2b_s^{s-1}\overline{z_{1}}
\neq\overline{0}.$$

\noindent
{\bf Case (iii). $s> 1, r< s.$}
In Lemma~\mref{lem:cases}.(\mref{it:t3}), take $(R, P):=(\sha(\bfk)/I_3, \overline{P_{\bfk}})$, $n:=s$ and $u:=\overline{z_{0}}$. Then $M_s=s^2$, $\lpt_{s}(f)=b_s^sP(\overline{z_{0}})
=b_s^s\overline{z_{1}}$ and $\lpt_\sigma(f)=\overline{0}$ for $\sigma< s$. Let $g\in (\sha(\bfk)/I_{3})^\NN$ be the identity. By ~(\ref{eq:RBr11}) and (\ref{eq:RBl0}), we have $$\lpt_1(\lpt(f)g)=b_sP(\lpt_{s}(f))\quad\text{and}\quad \lpt_0(f)\lpt_1(g)+\lpt_1(f)\lpt_0(g)=\overline{0},$$
respectively. Since $N_f=s<M_s$ and $f_\ell=\delta_{\ell, M_s}\overline{z_{0}}$, ~(\mref{eq:RBr2}) becomes $\lpt_1(f\lpt(g))=\overline{0}$. Thus we obtain
$$\left(\lpt_1(\lpt(f)g)+\lpt_1(f\lpt(g))\right)-
\left(\lpt_0(f)\lpt_1(g)+\lpt_1(f)\lpt_0(g)\right)
=b_sP(\lpt_s(f))=b_s^{s+1}\overline{z_{2}}\neq \overline{0}.$$

\noindent
{\bf Case (iv). $s=1, r>s.$}
We consider $(R, P):=(\sha(\bfk)/I_1, \overline{P_{\bfk}})$ and divide the proof into two subcases depending on whether or not $\sum\limits_{k=0}^{r-1} b_1^{k}$ is zero.

First assume $\sum\limits_{k=0}^{r-1} b_1^{k}\neq 0$. In Lemma~\mref{lem:cases}.(\mref{it:t4}), take $n:=r$ and $u:=\overline{z_{0}}$. Then $M_r=2r-1$, $\lpt_r(f)=\sum\limits_{k=0}^{r-1}a_r b_1^{k}\overline{z_{0}}$ and $\lpt_\sigma(f)=\overline{0}$ for $\sigma< r$. Let $g:=(g_k)\in (\sha(\bfk)/I_{1})^\NN$ be the identity. Then ~(\ref{eq:RBr11}) and (\ref{eq:RBl0}) give $$\lpt_1(\lpt(f)g)=a_r\lpt_r(f)\quad\text{and}\quad
\lpt_0(f)\lpt_1(g)+\lpt_1(f)\lpt_0(g)=\overline{0},$$
respectively. Further, by $N_f=r<M_r$ and $f_\ell=\delta_{\ell, M_r}\overline{z_{0}}$, ~(\mref{eq:RBr2}) gives $\lpt_1(f\lpt(g))=\overline{0}$. Thus we obtain
$$\left(\lpt_1(\lpt(f)g)+\lpt_1(f\lpt(g))\right)-
\left(\lpt_0(f)\lpt_1(g)+\lpt_1(f)\lpt_0(g)\right)
=a_r\lpt_r(f)=a_r^2\Big(\sum_{k=0}^{r-1} b_1^{k}\Big)\overline{z_{0}}\neq 0.$$

Next assume $\sum\limits_{k=0}^{r-1} b_1^{k}=0$. Then  $\sum\limits_{k=0}^{r-2} b_1^{k}=-b_1^{r-1}$. In Lemma~\mref{lem:cases}.(\mref{it:t4}), we can
take $n:=r-1$ and $u:=\overline{z_{0}}$. Then $M_{r-1}=2(r-1)$, $$\lpt_{r-1}(f)=\sum\limits_{k=0}^{r-2}a_r b_1^{k}\overline{z_{0}}=a_r (\sum\limits_{k=0}^{r-2}b_1^{k})\overline{z_{0}}=
-a_rb_1^{r-1}\overline{z_{0}}$$ and $\lpt_\sigma(f)=\overline{0}$ for $\sigma<r-1$. Let $g:=(g_k)\in (\sha(\bfk)/I_{1})^\NN$ with $g_k:=\delta_{k, 1}\overline{z_{0}}.$ Then by ~(\mref{eq:RBr10}), we have $$\lpt_1(\lpt(f)g)=a_r{r\choose r-1}\lpt_{r-1}(f)g_1=
-ra_r^2b_1^{r-1}\overline{z_{0}}.$$
Further, by $N_f=r\leq M_{r-1}$ and $f_\ell=\delta_{\ell, M_{r-1}}\overline{z_{0}}$, ~(\mref{eq:RBr2}) gives $$\lpt_1(f\lpt(g))=a_r f_r\lpt_0(g)=a_rf_rP(g_0)=\overline{0}.$$
By $\lpt_0(f)=\lpt_0(g)=\overline{0}$, we have $\lpt_0(f)\lpt_1(g)+\lpt_1(f)\lpt_0(g)=\overline{0}$. Thus we obtain
$$\left(\lpt_1(\lpt(f)g)+\lpt_1(f\lpt(g))\right)-
\left(\lpt_0(f)\lpt_1(g)+\lpt_1(f)\lpt_0(g)\right)=
-ra_r^2b_1^{r-1}\overline{z_{0}}\neq\overline{0}.$$

\noindent
{\bf Case (v). $s=1, r=s.$}
In Lemma~\mref{lem:cases}.(\mref{it:t5}), take $(R, P):=(\sha(\bfk)/I_1, \overline{P_{\bfk}})$, $n:=1$ and $u:=\overline{z_{0}}$. Then $M_1=1$, $\lpt_1(f)=a_1\overline{z_{0}}+b_1P(\overline{z_{0}})=a_1\overline{z_{0}}$ and $\lpt_0(f)=\overline{0}$. Let $g:=(g_k)\in (\sha(\bfk)/I_{1})^\NN$ with $g_k:=\delta_{k, 0}\overline{z_{0}}.$ So $g$ is the identity. By ~(\mref{eq:RBr11}) and $\lpt_0(f) =\overline{0}$, we have $$\lpt_1(\lpt(f)g)=a_1\lpt_{1}(f)+b_1P(\lpt_{1}(f)).$$ Since $f_\ell=\delta_{\ell, 1}\overline{z_{0}}$ and $\lpt_0(g)=P(g_0)=\overline{0}$, ~(\mref{eq:RBr2}) gives $\lpt_1(f\lpt(g))=\overline{0}$. By ~(\mref{eq:RBl0}) and $\lpt_0(f) =\overline{0}$, we have $\lpt_0(f)\lpt_1(g)+\lpt_1(f)\lpt_0(g)=\overline{0}$. Thus we obtain
$$\left(\lpt_1(\lpt(f)g)+\lpt_1(f\lpt(g))\right)-
\left(\lpt_0(f)\lpt_1(g)+\lpt_1(f)\lpt_0(g)\right)=a_1\lpt_{1}(f)+b_1P(\lpt_{1}(f))=a_1^2\overline{z_{0}}
\neq\overline{0}.$$

\noindent
{\bf Case (vi). $s=1, r<s.$}
We consider
$(R, P):=(\sha(\bfk)/I_3, \overline{P_{\bfk}})$
and divide the proof into two subcases depending on whether or not $b_1=1$.

First assume $b_1\neq 1$. In Lemma~\mref{lem:cases}.(\mref{it:t6}), take $n:=1$ and $u:=\overline{z_{0}}$. Then $M_1=1$, $\lpt_0(f)=\overline{0}$ and $\lpt_1(f)=b_1P(\overline{z_{0}})=b_1\overline{z_{1}}$. Let $g:=(g_k)\in (\sha(\bfk)/I_{3})^\NN$ be the identity, so $g_k:=\delta_{k, 0}\overline{z_{0}}.$
Then ~(\mref{eq:RBr2}) and ~(\mref{eq:RBr11}) give
$$\lpt_1(f\lpt(g))=b_1P(f_1\lpt_0(g))\quad\text{and}\quad
\lpt_1(\lpt(f)g)=b_1P(\lpt_{1}(f)),$$
respectively.
By $\lpt_0(f)=\overline{0}$,  ~(\mref{eq:RBl0}) becomes $$\lpt_0(f)\lpt_1(g)+\lpt_1(f)\lpt_0(g)=\lpt_1(f)\overline{z_{1}}.$$ Thus we obtain
$$\left(\lpt_1(\lpt(f)g)+\lpt_1(f\lpt(g))\right)-
\left(\lpt_0(f)\lpt_1(g)+\lpt_1(f)\lpt_0(g)\right)=b_1^{2}\overline{z_{2}}+b_1\overline{z_{2}}-b_1\overline{z_{1}}^2=b_1(b_1-1)\overline{z_{2}}\neq\overline{0}.$$

Next assume $b_1=1$. Let both $f:=(f_\ell)$ and $g:=(g_k)$ be the identity element of $(\sha(\bfk)/I_{3})^\NN$. Then $\lpt_0(f)=\lpt_0(g)=\overline{z_{1}}$. By ~(\mref{eq:3P1}), we have
$\lpt_{1}(f)=\lpt_{1}(g)=a_0\overline{z_0}+b_0\overline{z_{1}}.$ Then applying the commutativity of the multiplication and ~(\mref{eq:RBr11}), we have
$$\lpt_1(f\lpt(g))=\lpt_1(\lpt(f)g)=a_0\lpt_0(f)+b_0P(\lpt_0(f))+P(\lpt_{1}(f))=2a_0\overline{z_1}+2b_0\overline{z_2}.$$
Further, $\lpt_0(f)\lpt_1(g)+\lpt_1(f)\lpt_0(g)=2\overline{z_1}(a_0\overline{z_0}+b_0\overline{z_{1}})=2a_0 \overline{z_1}+4b_0\overline{z_2}$. Thus we obtain
$$\left(\lpt_1(\lpt(f)g)+\lpt_1(f\lpt(g))\right)-
\left(\lpt_0(f)\lpt_1(g)+\lpt_1(f)\lpt_0(g)\right)=2a_0\overline{z_1}\neq\overline{0}.$$

\noindent
{\bf Case (vii). $s=0, r>s.$}
In Lemma~\mref{lem:cases}.(\mref{it:t7}), take $(R, P):=(\sha(\bfk)/I_1, \overline{P_{\bfk}})$,
 $n:=r$ and $u:=\overline{z_{0}}$. Then $M_r=2r-1$, $\lpt_r(f)=a_r \overline{z_{0}}$ and $\lpt_\sigma(f)=\overline{0}$ for $\sigma<r$. Let $g:=(g_k)\in (\sha(\bfk)/I_{1})^\NN$ be the identity with $g_k:=\delta_{k, 0}\overline{z_{0}}.$ Since $\lpt_\sigma(f)=\overline{0}$ for $\sigma<r$, ~(\mref{eq:RBr11}) gives $\lpt_1(\lpt(f)g)=a_r\lpt_r(f)$.
By $N_f=r\leq M_r$ and $f_\ell=\delta_{\ell, M_r}\overline{z_{0}}$, ~(\mref{eq:RBr2}) gives $\lpt_1(f\lpt(g))=a_r f_r\lpt_0(g)=\overline{0}$. By $\lpt_0(f)=\overline{0}$,  ~(\mref{eq:RBl0}) becomes $\lpt_0(f)\lpt_1(g)+\lpt_1(f)\lpt_0(g)=\overline{0}$. Thus we obtain
\begin{equation*}
\left(\lpt_1(\lpt(f)g)+\lpt_1(f\lpt(g))\right)-
\left(\lpt_0(f)\lpt_1(g)+\lpt_1(f)\lpt_0(g)\right)
=a_r\lpt_r(f)=a_r^2\overline{z_0}\neq \overline{0}.
\end{equation*}

\noindent
{\bf Case (viii). $s=0, r=s.$}
In Lemma~\mref{lem:cases}.(\mref{it:t8}), take $(R, P):=(\sha(\bfk)/I_3, \overline{P_{\bfk}})$, $n:=1$ and $u:=\overline{z_{0}}$. Then $M_1=0$, $\lpt_1(f)=a_0\overline{z_{0}}+b_0P(\overline{z_{0}})=
a_0\overline{z_{0}}+b_0\overline{z_{1}}$ and $\lpt_0(f)=P(\overline{z_{0}})=\overline{z_{1}}$. Let $g=f$, i.e., $g:=(g_k)\in (\sha(\bfk)/I_{3})^\NN$  with $g_k:=\delta_{k, 0}\overline{z_{0}}$. Then applying the commutativity of the multiplication and ~(\mref{eq:RBr11}), we have $$\lpt_1(f\lpt(g))=\lpt_1(\lpt(f)g)=a_0\lpt_{0}(f)+b_0P(\lpt_{0}(f)).$$ Thus we obtain
\begin{eqnarray*}
&&\left(\lpt_1(\lpt(f)g)+\lpt_1(f\lpt(g))\right)-
\left(\lpt_0(f)\lpt_1(g)+\lpt_1(f)\lpt_0(g)\right)\\
&=&2(a_0\overline{z_{1}}+b_0\overline{z_{2}})
-2\overline{z_{1}}(a_0\overline{z_{0}}+b_0\overline{z_{1}}) \\
&=&-2b_0\overline{z_{2}}\neq \overline{0}.
\end{eqnarray*}

To recapitulate, applying Corollary~\mref{co:qq0}, we obtain that for each $(s, r)\in \NN\times\NN$, the given cover $\lpt$ of the chosen $P$ on $(R^\NN, \partial_R)$ is not a Rota-Baxter operator, completing the proof of Proposition~\mref{pp:lr}.
\end{proof}

\subsection{Proof of Theorem~\mref{thm:lr}.(\mref{it:weightlambda})}
\mlabel{ss:part2}
Finally we prove Theorem~\mref{thm:lr}.(\mref{it:weightlambda}).

Let $(R, P)$ be an arbitrary Rota-Baxter algebra of arbitrary weight $\lambda$. Recall from Proposition~\mref{prop:qq}.(\mref{it:pqn}) that
the cover $\lpt$ of $P$ on $(R^\NN,\partial_R)$ is again a Rota-Baxter operator of weight $\lambda$ if and only if for all $f, g\in R^\NN$, and $n\in \NN$,
\begin{equation}
\sum_{k=0}^{n}\sum_{j=0}^{n-k} {n\choose k} {n-k\choose j}
\lambda^{k}\lpt_{n-j}(f)\lpt_{k+j}(g)-\left(\lpt_n(\lpt (f)g)+\lpt_n(f\lpt (g))+\lambda \lpt_n(fg)\right)=0.
\mlabel{eq:comP0z}
\end{equation}

\noindent
(\mref{it:b1})$\Longrightarrow$(\mref{it:b2}). If Item~(\mref{it:b1}) holds, then as a special case, for every Rota-Baxter algebra $(R, P)$ of weight $0$, the cover $\lpt$ of $P$ is still a Rota-Baxter operator of weight $0$. So by Theorem~\mref{thm:lr}.(\mref{it:weight0}), $\omega$ is in $\calt_0$, that is, $\omega=xy-a_0$ or $\omega=xy-(b_0y+yx)$.

First consider $\omega=xy-a_0$. Then ~(\mref{eq:lptn1}) gives $\lpt_{n}(f)=a_0f_{n-1}$ for all $f\in R^\NN, n\in\NN_+.$ Together with $\lpt_0(f)=P(f_0)$, we obtain
\begin{equation}
\lpt(f)=(P(f_0), a_0f_0, a_0f_1, \cdots).
\mlabel{eq:4PP}
\end{equation}
We take $(R, P):=(\sha(\bfk)/I_{1}, \overline{P_{\bfk}})$ of weight $\lambda$, and $f=g\in(\sha(\bfk)/I_{1})^\NN$ with
$f_{\ell}:=\delta_{\ell, 0}\overline{z_{0}}$.
Applying ~(\mref{eq:4PP}), we have
\begin{equation}
\lpt_{1}(f)=\lpt_{1}(g)=a_0\overline{z_{0}},\quad \lpt_{0}(f)=\lpt_{0}(g)=P(\overline{z_{0}})=\overline{0}.
\label{eq:cc0}
\end{equation}
Then we obtain
\begin{eqnarray*}
\sum_{k=0}^{1}\sum_{j=0}^{1-k} {1\choose k} {1-k\choose j}
\lambda^{k}\lpt_{1-j}(f)\lpt_{k+j}(g)&=&
\lpt_0(f)\lpt_1(g)+\lpt_1(f)\lpt_0(g)+\lambda \lpt_1(f)\lpt_1(g)\notag\\
&=&\lambda a_0^2\overline{z_{0}}\quad \text{(by ~(\mref{eq:cc0}))}
\end{eqnarray*}
and
\begin{eqnarray*}
\lpt_1(\lpt(f)g)+\lpt_1(f\lp(g))+\lambda \lpt_1(fg)
&=&a_0 \lpt_{0}(f)g_0 +a_0f_0\lpt_{0}(g)+\lambda a_0 f_0g_0\quad \text{(by ~(\mref{eq:4PP}))}\notag\\
&=&\lambda a_0\overline{z_{0}}.\quad \text{(by ~(\mref{eq:cc0}))}\notag
\end{eqnarray*}
Then
$$\sum_{k=0}^{1}\sum_{j=0}^{1-k} {1\choose k} {1-k\choose j}
\lambda^{k}\lpt_{1-j}(f)\lpt_{k+j}(g)-\left(\lpt_1(\lpt(f)g)+
\lpt_1(f\lpt(g))+\lambda \lpt_1(fg)\right)=\lambda a_0(a_0-1) \overline{z_{0}}.$$
Thus for a given nonzero $\lambda$, ~(\mref{eq:comP0z}) holds in the case of $n=1$ for the above chosen Rota-Baxter algebra $(\sha(\bfk)/I_1, \overline{P_\bfk})$ and $f, g\in \sha(\bfk)/I_1$
if and only if $a_0=0$ or $a_0=1$, i.e., $\omega=xy$ or $\omega=xy-1$.
Next consider $\omega=xy-(b_0y+yx)$. Then applying ~(\mref{eq:lptn1}) gives
\begin{equation}
\lpt_1(f)=b_0P(f_0)+P(f_1)\quad\text{for all} \  f\in R^\NN.
\mlabel{eq:bP1}
\end{equation}
We take $(R, P):=(\sha(\bfk)/I_{2}, \overline{P_{\bfk}})$, and $f,  g \in(\sha(\bfk)/I_{2})^\NN$ with $f_{\ell}:=\delta_{\ell, 1}\overline{z_{0}}, g_k:=\delta_{k, 0}\overline{z_{0}}.$
Then we obtain
\begin{equation}
\lpt_{0}(f)=\overline{0},\quad\lpt_{0}(g)=\overline{z_1},\quad
\lpt_{1}(f)=P(f_1)=\overline{z_{1}}, \quad\lpt_{1}(g)=b_0P(g_0)=b_0\overline{z_{1}}.
\mlabel{eq:cc1}
\end{equation}
Thus
\begin{eqnarray*}
\sum_{k=0}^{1}\sum_{j=0}^{1-k} {1\choose k} {1-k\choose j}
\lambda^{k}\lpt_{1-j}(f)\lpt_{k+j}(g)&=&\lpt_0(f)\lpt_1(g)+\lpt_1(f)\lpt_0(g)+
\lambda \lpt_1(f)\lpt_1(g)\\
&=&\overline{z_{1}}^2+\lambda b_0\overline{z_{1}}^2=\lambda\overline{z_{1}}+\lambda^2 b_0\overline{z_{1}}
\end{eqnarray*}
and
\begin{eqnarray*}
&&\lpt_1(\lpt (f)g)+\lpt_1(f\lpt (g))+\lambda \lpt_1(fg)\\
&=&\lpt_1(\lpt (f))+\lpt_1(f\lpt (g))+\lambda \lpt_1(f)\quad \text{(since $g$ is the identity element)}\\
&=&b_0P(\lpt_0(f))+P(\lpt_1(f))
+b_0P((f\lpt (g))_0)+P((f\lpt (g))_1)\\
&&+\lambda \left(b_0P(f_0)+P(f_1)\right)\quad \text{(by ~(\mref{eq:bP1}))}\\
&=&b_0P(\lpt_0(f))+P(\lpt_1(f))
+b_0P(f_0 \lpt_0(g))+P\left(f_1 \lpt_0(g)+f_0 \lpt_1(g)+\lambda f_1 \lpt_1(g)\right)\\
&&+\lambda \left(b_0P(f_0)+P(f_1)\right)\quad \text{(by ~(\mref{eq:hurprod}))}\\
&=&\lambda \overline{z_{1}}\quad \text{(by ~(\mref{eq:cc1}))}.
\end{eqnarray*}
Thus for a nonzero $\lambda\in\bfk$, ~(\mref{eq:comP0z}) holds in the case of $n=1$ for the above chosen Rota-Baxter algebra and $f, g$ if and only if
$\lambda \overline{z_1}+\lambda^2 b_0 \overline{z_1}-\lambda \overline{z_1}=\lambda^2 b_0\overline{z_{1}}= \overline{0}$,
which holds if and only if $b_0=0$. Thus $\omega=xy-yx$.

Therefore, $\omega$ must be in $\calt_{\bfk}=\{xy, xy-1, xy-yx\}.$

\noindent
(\mref{it:b2})$\Longrightarrow$(\mref{it:b1}). For $\omega=xy-1$, the cover $\lpt$ of $P$ on $(R^\NN,\partial_R)$ is again a Rota-Baxter operator of weight $\lambda$ by~\cite[Proposition 3.8]{ZGK}.
When $\omega=xy$ or $\omega=xy-yx$, we need to show that the cover $\lpt$ of $P$ on $(R^\NN,\partial_R)$ satisfies ~(\mref{eq:comP0z}).

For $\omega=xy$,  applying ~(\mref{eq:lptn1}), we have
\begin{equation}
\lpt_n(f)=0 \quad \text{for all}\  n\in \NN_+, f\in R^\NN.
\mlabel{eq:comP0}
\end{equation}
By Proposition~\mref{prop:qq}.(\mref{it:pq0}), ~(\mref{eq:comP0z}) holds for $n=0$.
If $n\in\NN_+$, then the maximum of the subscripts $m$ of the expressions $\lpt_m$ appearing in each term of the left side of ~(\mref{eq:comP0z}) is strictly larger than $0$ and then by ~(\mref{eq:comP0}), each term in the left side of ~(\mref{eq:comP0z}) is $0$. Thus ~(\mref{eq:comP0z}) holds for all $n\in\NN_+$.

For $\omega=xy-yx$, by ~(\mref{eq:lptn1}), the cover $\lpt$ of $P$ is given by
\begin{equation}
\lpt_n(f)=\lpt_{n-1}(\partial_R f)=\lpt_0(\partial_R^n f)=P(f_n) \quad \text{for all}\  n\in \NN, f\in R^\NN.
\mlabel{eq:comP}
\end{equation}
Furthermore, for all $f, g\in R^\NN$, and $n\in \NN$,
\begin{eqnarray*}
&&\sum_{k=0}^{n}\sum_{j=0}^{n-k} {n\choose k} {n-k\choose j}
\lambda^{k}\lpt_{n-j}(f)\lpt_{k+j}(g)\\
&=&\sum_{k=0}^{n}\sum_{j=0}^{n-k} {n\choose k} {n-k\choose j}
\lambda^{k}P(f_{n-j})P(g_{k+j})\quad \text{(by ~(\mref{eq:comP}))}\\
&=&\sum_{k=0}^{n}\sum_{j=0}^{n-k} {n\choose k} {n-k\choose j}
\lambda^{k}\left(P(P(f_{n-j})g_{k+j})+P(f_{n-j}P(g_{k+j}))+
\lambda P(f_{n-j} g_{k+j})\right)\quad \text{(by ~(\mref{eq:bax1}))}\\
&=&\sum_{k=0}^{n}\sum_{j=0}^{n-k} {n\choose k} {n-k\choose j}
\lambda^{k}\left(P(\lpt_{n-j}(f)g_{k+j})+P(f_{n-j}\lpt_{k+j}(g))+
\lambda P(f_{n-j} g_{k+j})\right)\quad \text{(by ~(\mref{eq:comP}))}\\
&=&P((\lpt (f)g)_n)+P((f\lpt (g))_n)+\lambda P((fg)_n)\quad \text{(by ~(\mref{eq:hurprod}))}\\
&=&\lpt_n(\lpt (f)g)+\lpt_n(f\lpt (g))+\lambda \lpt_n(fg).\quad \text{(by ~(\mref{eq:comP}))}
\end{eqnarray*}
Then ~(\mref{eq:comP0z}) holds.

Now we have completed the proof of Theorem~\mref{thm:lr}.

\smallskip

\noindent
{\bf Acknowledgements}:
This work is supported by the National Natural Science Foundation of China (Grant No. 11771190) and the China Scholarship Council (Grant No. 201606180084). Shilong Zhang thanks Rutgers University-Newark for its hospitality during his visit from August 2016 to August 2017.

\end{document}